\numberwithin{equation}{section}
\newtheorem{thm}{Theorem}[section]
\newtheorem{lmm}[thm]{Lemma}
\newtheorem{prp}[thm]{Proposition}
\newtheorem{crl}[thm]{Corollary}
\theoremstyle{definition}
\newtheorem{dfn}[thm]{Definition}
\newtheorem{eg}[thm]{Example}
\newtheorem{rmk}[thm]{Remark}
\def\BE#1{\begin{equation}\label{#1}}
\def\EE{\end{equation}}
\def\eref#1{(\ref{#1})}
\def\lra{\longrightarrow}
\def\Lra{\Longrightarrow}
\def\lhra{\ensuremath{\lhook\joinrel\relbar\joinrel\rightarrow}}
\def\blr#1{\big\langle{#1}\big\rangle}
\def\ov#1{\overline{#1}}
\def\wt#1{\widetilde{#1}}
\def\sf#1{\textsf{#1}}
\def\ri#1{{#1}^{\circ}}
\def\sm#1{\begin{small}{#1}\end{small}}
\def\tn#1{\textnormal{#1}}
\def\wh#1{\widehat{#1}}
\def\al{\alpha}
\def\be{\beta}
\def\de{\delta}
\def\ep{\epsilon}
\def\io{\iota}
\def\ka{\kappa}
\def\la{\lambda}
\def\na{\nabla}
\def\om{\omega}
\def\th{\theta}
\def\ve{\varepsilon}
\def\ze{\zeta}
\def\Ga{\Gamma}
\def\De{\Delta}
\def\La{\Lambda}
\def\Om{\Omega}
\def\Th{\Theta}
\def\fc{\mathfrak c}
\def\bI{\mathbb I}
\def\C{\mathbb C}
\def\cC{\mathcal C}
\def\fD{\mathfrak D}
\def\bI{\mathbb I}
\def\cK{\mathcal K}
\def\cL{\mathcal L}
\def\cN{\mathcal N}
\def\cO{\mathcal O}
\def\P{\mathbb P}
\def\cP{\mathcal P}
\def\R{\mathbb R}
\def\cR{\mathcal R}
\def\fR{\mathfrak R}
\def\Q{\mathbb Q}
\def\X{\mathbf X}
\def\cZ{\mathcal Z}
\def\Z{\mathbb Z}
\def\ne{\textnormal{e}}
\def\fI{\mathfrak i}
\def\Aux{\tn{Aux}}
\def\Dom{\textnormal{Dom}}
\def\id{\textnormal{id}}
\def\Im{\textnormal{Im}}
\def\nd{\textnormal{d}}
\def\hor{\textnormal{hor}}
\def\top{\textnormal{top}}
\def\PD{\textnormal{PD}}
\def\pr{\textnormal{pr}}
\def\Re{\textnormal{Re}}
\def\Symp{\textnormal{Symp}}
\def\supp{\textnormal{supp}}
\def\ver{\textnormal{ver}}
\def\i{\infty}
\def\w{\wedge}
\def\eset{\emptyset}
\def\prt{\partial}
\def\1{\mathbf 1}
\def\hb{\hbar}
\def\bu{\bullet}
\begin{document}

\title{The Smoothability of Normal Crossings Symplectic Varieties}
\author{Mohammad F.~Tehrani, Mark McLean, and 
Aleksey Zinger\thanks{Partially supported by NSF grants 0846978 and 1500875}}

\date{\today}

\maketitle

\begin{abstract}
\noindent
Our previous paper introduces topological notions of normal crossings symplectic divisor
and variety and establishes that they are equivalent, in a suitable sense,
to the desired geometric notions.
Friedman's $d$-semistability condition  is well-known to be 
an obstruction to the smoothability of a normal crossings variety
in a one-parameter family with a smooth total space
in the algebraic geometry category.
We show that the direct analogue of this condition is the only obstruction
to such smoothability in the symplectic topology category.
Every smooth fiber of the families of smoothings we describe 
provides a multifold analogue of the now classical (two-fold) symplectic sum construction;
we thus establish an old suggestion of Gromov in a strong~form. 
\end{abstract}

\tableofcontents

\section{Introduction}
\label{intro_sec}

\noindent
Flat one-parameter families of smoothings are an important tool in algebraic geometry 
and raise considerable interest in related areas of symplectic topology.
The Gross-Siebert program~\cite{GS0} for a direct proof of mirror symmetry 
has highlighted in particular the significance of log smooth degenerations 
to log smooth algebraic varieties. 
A central part of this program is the study of Gromov-Witten invariants 
(which are fundamentally symplectic topology invariants) under such degenerations.
Such a study is undertaken from an algebro-geometric perspective in \cite{AC,Ch,GS}.
The almost complex analogue of the log smooth category provided by 
the exploded manifold category of~\cite{Brett0} underlines 
a similar study of Gromov-Witten invariants in~\cite{Brett}.
Log smooth varieties include varieties with \sf{normal crossings} (or \sf{NC}) singularities,
i.e.~singularities of the form $z_1\!\ldots\!z_N\!=\!0$ in complex coordinates.
Purely symplectic topology notions of an \sf{NC symplectic variety} and of 
a
\sf{one-parameter family of smoothings} of such a variety
are introduced in~\cite{SympDivConf} and in this paper, respectively.
It is straightforward to show that 
the direct analogue of the well-known triple point condition of algebraic geometry 
is an obstruction
for an NC symplectic variety to admit a one-parameter family of smoothings.
The main construction of this paper produces such a family for
every NC variety satisfying this direct analogue and thus establishes
the necessity {\it and} sufficiency of this condition.
A non-central fiber of such a family is a representative of the deformation equivalence class of 
the multifold symplectic sum construction on the central fiber envisioned
in~\cite[p343]{GrBook}.\\

\noindent
For a symplectic submanifold~$V$ in a symplectic manifold $(X,\om)$,
the normal bundle 
\BE{cNXVsymp_e}
\cN_XV\equiv \frac{TX|_V}{TV}\approx TV^{\om}
\equiv \big\{v\!\in\!T_xX\!:\,x\!\in\!V,\,\om(v,w)\!=\!0~\forall\,w\!\in\!T_xV\big\}
\lra V\EE
of~$V$ in~$X$ inherits a fiberwise symplectic form~$\om|_{\cN_XV}$ from~$\om$.
The space of complex structures on the fibers of~\eref{cNXVsymp_e}
compatible with (resp.~tamed~by)  $\om|_{\cN_XV}$ is non-empty and contractible;
we call such complex structures \sf{$\om$-compatible} (resp.~\sf{$\om$-tame}).
The now classical symplectic sum construction, indicated in~\cite[p343]{GrBook} 
and carried out in~\cite{Gf,MW}, smooths out the union of two symplectic manifolds 
$(X_1,\om_1)$ and $(X_2,\om_2)$ glued along a common compact smooth symplectic 
divisor $V\!\equiv\!X_{12}$ such~that
\BE{Gfcond_e}  c_1(\cN_{X_1}X_{12})+c_1(\cN_{X_2}X_{12})=0\in H^2(X_{12};\Z)\EE
into a new symplectic manifold $(X_{\#},\om_{\#})$.
From the complex geometry point of view, this construction replaces
the nodal singularity $z_1z_2\!=\!0$ in~$\C^n$, i.e.~the union of 
the two coordinate hyperplanes, by a smoothing $z_1z_2\!=\!\la$ with $\la\!\in\!\C^*$.\\

\noindent
In this paper, we describe a multifold version of the symplectic sum construction of~\cite{Gf,MW};
it in particular smooths out the union of several symplectic manifolds 
identified along transversely intersecting smooth divisors with a {\it single} 
smoothing parameter~$\la$.
From the complex geometry point of view, this construction replaces
the singularity $z_1\!\ldots\!z_N\!=\!0$ in~$\C^n$, i.e.~the union of 
the $N$ coordinate hyperplanes, by a smoothing $z_1\!\ldots\!z_N\!=\!\la$ with $\la\!\in\!\C^*$. 
An inverse degeneration construction,
which includes a multifold version of the symplectic cut procedure of~\cite{L},
is described in~\cite{SympCutMulti}.
The precise relation between the smoothing/sum construction of the present paper
and the degeneration/cut construction of~\cite{SympCutMulti} is the subject of~\cite{SympSumCut}.\\

\noindent
The topological restriction~\eref{Gfcond_e} is equivalent to the existence of an isomorphism
\BE{cOGfbdl_e}\cN_{X_1}X_{12}\otimes_{\C}\cN_{X_2}X_{12}
\approx X_{12}\!\times\!\C\EE
of complex line bundles over~$X_{12}$.
The topological type of~$X_{\#}$ in~\cite{Gf} depends only on 
the homotopy class of such an isomorphism.
With such a choice fixed, the construction of~\cite{Gf} involves choosing 
an $\om_1$-compatible almost complex structure on $\cN_{X_1}X_{12}$,
an $\om_2$-compatible almost complex structure on $\cN_{X_2}X_{12}$,
and a representative for the above homotopy class.
Because of these choices, the resulting symplectic manifold $(X_{\#},\om_{\#})$
is determined by $(X_1,\om_1)$, $(X_2,\om_2)$, and the choice of the homotopy class
only up to symplectic deformation equivalence.
Since the symplectic deformations of the~tuple
\BE{Gfconf_e}\big((X_1,X_2,X_{12}),(\om_1,\om_2)\big)\EE
do not affect  the deformation equivalence class of $(X_{\#},\om_{\#})$,
it would have been sufficient to carry out the symplectic sum construction
of~\cite{Gf} only on a path-connected set of representatives for
each deformation equivalence class of the tuples~\eref{Gfconf_e}.
This change in perspective turns out to be very useful for smoothing out 
NC symplectic varieties, 
including unions of several symplectic manifolds glued along transversally intersecting
smooth divisors.\\

\noindent
The one-parameter family $z_1\!\ldots\!z_N\!=\!\la$ of smoothings of
the union of the $N$ coordinate hyperplanes~$\C^N_i$ in~$\C^N$ involves
compatible complex structures on~$\C^N_i$ that preserve all
coordinate  subspaces $\C^N_I\!\subsetneq\!\C^N$.
For each $i\!=\!1,\ldots,N$, the union of the codimension~2 coordinate subspaces~$\C_{ij}^N$
with $j\!\neq\!i$ is a  \sf{simple crossings} (or \sf{SC}) K\"ahler
\sf{divisor} in~$\C^N_i$.
Analogues of this notion and of the related notion of an \sf{SC variety} 
in the symplectic category are introduced in~\cite{SympDivConf}
and reviewed in Section~\ref{SCdfn_subs} of the present paper; 
see Definitions~\ref{SCD_dfn} and~\ref{SCC_dfn}.
In the terminology of Definition~\ref{SCC_dfn},
the~tuple~\eref{Gfconf_e} is a \sf{2-fold SC symplectic configuration} 
and the~tuple
\BE{Gfconf_e2}\big(X_1\!\cup_{X_{12}}\!X_2,(\om_1,\om_2)\big)\EE
is the \sf{associated NC~symplectic variety}.
As noted at the end of Section~\ref{SCdfn_subs},
an SC symplectic variety~$X_{\eset}$ comes with a natural complex line 
bundle~$\cO_{X_{\prt}}(X_{\eset})$ over its \sf{singular locus}~$X_{\prt}$;
see~\eref{PsiDfn_e}.
We call it the \sf{normal bundle} of~$X_{\prt}$ in~$X_{\eset}$;
it reduces to the left-hand side of~\eref{cOGfbdl_e} in the setting of~\cite{Gf}.
By Theorem~\ref{SympSum_thm}, an SC symplectic variety~$X_{\eset}$ 
is smoothable in a one-parameter family 
if and only if the line bundle $\cO_{X_{\prt}}(X_{\eset})$ is trivial.
Furthermore, the possible families of smoothings are again classified by
the homotopy classes of its trivializations.
We give two examples in Section~\ref{SCCeg_subs}.
In~\cite{SympNC}, we extend Theorem~\ref{SympSum_thm} to arbitrary NC symplectic varieties
and give more elaborate examples of the associated smoothings.\\

\noindent
Theorem~\ref{SympSum_thm} leads to and has further potential for 
very different applications in symplectic topology.
First and foremost, it includes a new surgery construction for symplectic manifolds
and thus opens the possibility of generating new symplectic manifolds. 
Furthermore, it fits naturally with a decade-long program to develop 
decomposition formulas for Gromov-Witten invariants under one-parameter families
of almost K\"ahler (or projective) degenerations;
approaches to this problem appear in \cite{AC,Ch,GS,Brett}.
An immediate consequence of Theorem~\ref{SympSum_thm}, along with \cite[Theorem~2.17]{SympDivConf}, 
is that the decomposition formulas arising from~\cite{Brett} include 
splitting formulas for Gromov-Witten invariants of 
the $N$-fold symplectic sums of Theorem~\ref{SympSum_thm}.
Since the decomposition formulas of~\cite{Brett} have connections with tropical geometry,
Theorem~\ref{SympSum_thm} may have applications in this field as well.
It should also have applications in the theory of singularities,
as an isolated singularity can often be studied by smoothing~it 
and then applying symplectic techniques as in \cite{Mi,Se}.
Theorem~\ref{SympSum_thm} provides a purely topological condition 
for the smoothability of a singularity symplectically after a sequence of blowups
that turns it into a simple crossings form
(even though it may not be smoothable algebraically).\\

\noindent
Surgery constructions on 4-dimensional symplectic manifolds along {\it pairwise}
positively intersecting immersed surfaces are described in
\cite{SymingtonThesis,Symington3}.
While these are called $N$-fold symplectic sum constructions,
this terminology agrees with ours (which is consistent with
algebraic geometry and \cite[p343]{GrBook}) only for $N\!=\!3$.
In particular, the setting of \cite[Theorem~2.7]{Symington3}
is a specialization of the $N\!=\!3$ case of the setting of our Theorem~\ref{SympSum_thm}.
The output of \cite[Theorem~2.7]{Symington3} is then symplectically deformation equivalent
to the smooth fibers of the one-parameter family provided by Theorem~\ref{SympSum_thm}.
The perspectives taken in~\cite{Symington3} and the present paper are fundamentally different
as~well.
The viewpoint taken in the former is that of surgery on 4-dimensional manifolds;
our viewpoint is that of smoothing a variety in a one-dimensional family with 
a smooth total space.
The configurations in~\cite{Symington3} with $N\!\ge\!4$ correspond to varieties, such~as
\BE{Gfconf_e4}\big\{(x,y,z,w)\!\in\!\C^4:\,xy\!=\!0,\,zw\!=\!0\big\},\EE
that do not even admit such smoothings.
The total space of the natural one-parameter smoothing of~\eref{Gfconf_e4},
i.e.~with~$0$ replaced by~$\la\!\in\!\C$, is singular at the origin.
On the other hand, the total space of this family is smooth in 
the logarithmic category central to the mirror symmetry program of~\cite{GS0}
and in the exploded manifold category of~\cite{Brett}.
Unfortunately, symplectic topology analogues of the singularity described by~\eref{Gfconf_e4}
are yet to be defined.\\

\noindent
Notions of symplectic regularizations for an SC divisor
$\{V_i\}_{i\in S}$ in~$X$ and a configuration~$\X$ are introduced
in \cite[Sections~2.2,2,3]{SympDivConf} and recalled 
in Sections~\ref{SCDregul_subs} and~\ref{SCCregul_subs} of the present paper;
see Definitions~\ref{SCDregul_dfn} and~\ref{SCCregul_dfn}.
Such regularizations provide essential auxiliary data for
the multifold symplectic smoothing/sum construction of Theorem~\ref{SympSum_thm},
just as they did in the $N\!=\!2$ case addressed in \cite{Gf,MW}.
By \cite[Theorem~2.17]{SympDivConf}, the space $\Symp^+(\X)$ of symplectic structures 
on~$\X$ is weakly homotopy equivalent to the space $\Aux(\X)$ of pairs consisting of 
a symplectic structure on~$\X$ and a compatible regularization.
In Section~\ref{SCCprpPf_subs}, we show that a given trivialization of
the complex line bundle~$\cO_{X_{\prt}}(X_{\eset})$  can be homotoped 
to be compatible with a given symplectic regularization for~$\X$
in a suitable sense
and that any two compatible trivializations are homotopic to each other through    
compatible trivializations.
While the projection map from $\Aux(\X)$
to $\Symp^+(\X)$ need not be surjective in general (in contrast to the $N\!=\!2$ case), 
this is not an issue for typical applications in symplectic topology.\\

\noindent
In Section~\ref{SympSumPf_sec}, we show that the triviality of $\cO_{X_{\prt}}(X_{\eset})$
is sufficient for the existence of a one-parameter
family of smoothings of the symplectic variety associated to an SC symplectic configuration, 
up to symplectic deformation equivalence.
By Proposition~\ref{InducedIsom_prp}, this condition is necessary and in fact 
every one-parameter family of smoothings determines a homotopy class
of trivializations of~$\cO_{X_{\prt}}(X_{\eset})$.
By Proposition~\ref{PhiExt_prp}, the homotopy class determined by 
a one-parameter family constructed as in Section~\ref{SympSumPf_sec} 
is the homotopy class used in its construction.
Appendix~\ref{conn_app} collects some basic facts concerning connections
on vector bundles.
Appendix~\ref{RemainPf_sec} provides a more intrinsic perspective 
on the smoothability criterion of Theorem~\ref{SympSum_thm}.\\

\noindent
We would like to thank E.~Lerman for pointing out related literature and
B.~Parker and D.~Sullivan for related discussions.

\section{Main theorem}
\label{MainThm_sec}

\noindent
We begin by introducing the most commonly used notation.
If $N\!\in\!\Z^{\ge0}$ and $I\!\subset\!\{1,\ldots,N\}$, let 
$$[N]=\{1,\ldots,N\}, \qquad
\C_I^N=\big\{(z_1,\ldots,z_N)\!\in\!\C^n\!:\,z_i\!=\!0~\forall\,i\!\in\!I\big\}.$$
Denote by $\cP(N)$ the collection of subsets of~$[N]$ and
by $\cP^*(N)\!\subset\!\cP(N)$ the collection of nonempty subsets.
If $\cN\!\lra\!V$ is a vector bundle, $\cN'\!\subset\!\cN$, and $V'\!\subset\!V$, we define
\BE{cNrestrdfn_e} \cN'|_{V'}=\cN|_{V'}\cap\cN'\,.\EE
Let $\bI\!=\![0,1]$.

\subsection{Preliminaries}
\label{SCdfn_subs}

\noindent
We first recall the notions of simple crossings (or \sf{SC}) symplectic divisor and
variety introduced, described in more detail, and illustrated with 
examples in \cite[Section~2.1]{SympDivConf}.
We then define a natural complex line bundle~$\cO_{X_{\prt}}(X_{\eset})$ over
the \sf{singular locus} $X_{\prt}$ of an SC symplectic variety~$X_{\eset}$.\\

\noindent
Let $X$ be a (smooth) manifold. 
For any submanifold $V\!\subset\!X$, let
$$\cN_XV\equiv \frac{TX|_V}{TV}\lra V$$
denote the normal bundle of~$V$ in~$X$.
For a collection $\{V_i\}_{i\in S}$ of submanifolds of~$X$ and $I\!\subset\!S$, let
$$V_I\equiv \bigcap_{i\in I}\!V_i\subset X\,.$$
Such a collection is called \sf{transverse} if any subcollection $\{V_i\}_{i\in I}$ 
of these submanifolds intersects transversely, i.e.~the homomorphism
\BE{TransVerHom_e}
T_xX\oplus\bigoplus_{i\in I}T_xV_i\lra \bigoplus_{i\in I}T_xX, \qquad
\big(v,(v_i)_{i\in I}\big)\lra (v\!+\!v_i)_{i\in I}\,,\EE
is surjective for all $x\!\in\!V_I$. 
Each subspace $V_I\!\subset\!X$ is then a submanifold of~$X$ 
and the homomorphisms 
\BE{cNorient_e2}\begin{split}
\cN_XV_I\lra \bigoplus_{i\in I}\cN_XV_i\big|_{V_I}\quad&\forall~I\!\subset\!S,\qquad
\cN_{V_{I-i}}V_I\lra \cN_XV_i\big|_{V_I} \quad\forall~i\!\in\!I\!\subset\!S,\\
&\bigoplus_{i\in I-I'}\!\!\cN_{V_{I-i}}V_I\lra \cN_{V_{I'}}V_I \quad\forall~I'\!\subset\!I\!\subset\!S
\end{split}\EE
induced by inclusions of the tangent bundles are isomorphisms.\\

\noindent
If $X$ is an oriented manifold,
a transverse collection $\{V_i\}_{i\in S}$ of oriented submanifolds of~$X$
of even codimensions  induces an orientation of each submanifold $V_I\!\subset\!X$
with $|I|\!\ge\!2$, which we call \sf{the intersection orientation of~$V_I$}.
If $V_I$ is zero-dimensional, it is a discrete collection of points in~$X$
and the homomorphism~\eref{TransVerHom_e} is an isomorphism at each point $x\!\in\!V_I$;
the intersection orientation of~$V_I$ at $x\!\in\!V_I$
then corresponds to a plus or minus sign, depending on whether this isomorphism
is orientation-preserving or orientation-reversing.
For convenience, we call the original orientations of 
$X\!=\!V_{\eset}$ and $V_i\!=\!V_{\{i\}}$ \sf{the intersection orientations}
of these submanifolds~$V_I$ of~$X$ with $|I|\!<\!2$.\\

\noindent
Suppose $(X,\om)$ is a symplectic manifold and $\{V_i\}_{i\in S}$ is a transverse collection 
of submanifolds of~$X$ such that each $V_I$ is a symplectic submanifold of~$(X,\om)$.
Each $V_I$ then carries an orientation induced by $\om|_{V_{I}}$,
which we will call the \sf{$\om$-orientation}.
If $V_I$ is zero-dimensional, it is automatically a symplectic submanifold of~$(X,\om)$;
the $\om$-orientation of~$V_I$ at each point $x\!\in\!V_I$ corresponds to the plus sign 
by definition.
By the previous paragraph, the $\om$-orientations of~$X$ and~$V_i$ with $i\!\in\!I$
also induce intersection orientations on all~$V_I$.

\begin{dfn}\label{SCD_dfn}
Let $(X,\om)$ be a symplectic manifold.
An \sf{SC symplectic divisor} 
in~$(X,\om)$ is a finite transverse collection 
$\{V_i\}_{i\in S}$ of closed submanifolds of~$X$ of codimension~2 such that 
$V_I$ is a symplectic submanifold of~$(X,\om)$ for every $I\!\subset\!S$
and the intersection and $\om$-orientations of~$V_I$~agree.
\end{dfn}

\noindent
The intersection and symplectic orientations of~$V_I$ agree if $|I|\!<\!2$.
Thus, an SC symplectic divisor $\{V_i\}_{i\in S}$ with $|S|\!=\!1$ is 
a smooth symplectic divisor in the usual sense. 
If $(X,\om)$ is a 4-dimensional symplectic manifold, 
a finite transverse collection  $\{V_i\}_{i\in S}$ of closed symplectic submanifolds of~$X$ 
of codimension~2 is an SC symplectic divisor if all points of the pairwise intersections
$V_{i_1}\!\cap\!V_{i_2}$ with $i_1\!\neq\!i_2$ are positive;
these are the cases considered in~\cite{SymingtonThesis,Symington3}.

\begin{dfn}\label{SCdivstr_dfn}
Let $X$ be a manifold and $\{V_i\}_{i\in S}$ be a finite transverse collection of 
closed submanifolds of~$X$ of codimension~2.
A \sf{symplectic structure on $\{V_i\}_{i\in S}$ in~$X$} is a symplectic form~$\om$ 
on~$X$ such that $V_I$ is a symplectic submanifold of $(X,\om)$ for all $I\!\subset\!S$.
\end{dfn}

\noindent
For $X$ and $\{V_i\}_{i\in S}$ as in Definition~\ref{SCdivstr_dfn}, 
we denote by $\Symp(X,\{V_i\}_{i\in S})$ the space of all symplectic structures 
on $\{V_i\}_{i\in S}$ in~$X$ and by 
$$\Symp^+\big(X,\{V_i\}_{i\in S}\big)\subset \Symp\big(X,\{V_i\}_{i\in S}\big)$$
the subspace of the symplectic forms~$\om$ such that $\{V_i\}_{i\in S}$
is an SC symplectic divisor in~$(X,\om)$.

\begin{dfn}\label{TransConf_dfn1}
Let $N\!\in\!\Z^+$.
An \sf{$N$-fold transverse configuration} is a tuple $\{X_I\}_{I\in\cP^*(N)}$
of manifolds such that $\{X_{ij}\}_{j\in[N]-i}$ is a transverse collection 
of submanifolds of~$X_i$ for each $i\!\in\![N]$ and
$$X_{\{ij_1,\ldots,ij_k\}}\equiv \bigcap_{m=1}^k\!\!X_{ij_m}
=X_{ij_1\ldots j_k}\qquad\forall~j_1,\ldots,j_k\in[N]\!-\!i.$$
\end{dfn}

\begin{dfn}\label{TransConf_dfn2}
Let $N\!\in\!\Z^+$ and $\X\!\equiv\!\{X_I\}_{I\in\cP^*(N)}$ be an $N$-fold transverse configuration
such that $X_{ij}$ is a closed submanifold of~$X_i$ of codimension~2
for all $i,j\!\in\![N]$ distinct.
A \sf{symplectic structure on~$\X$} is a~tuple 
$$(\om_i)_{i\in[N]}\in 
\prod_{i=1}^N\Symp\big(X_i,\{X_{ij}\}_{j\in[N]-i}\big)$$
such that $\om_{i_1}|_{X_{i_1i_2}}\!=\!\om_{i_2}|_{X_{i_1i_2}}$ for all $i_1,i_2\!\in\![N]$.
\end{dfn}

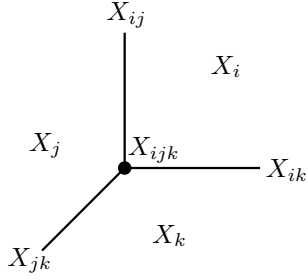
\begin{figure}
\begin{pspicture}(-3,-2)(11,2)
\psset{unit=.3cm}
% 2nd diagram starts here
\psline[linewidth=.1](15,-2)(21,-2)\psline[linewidth=.1](15,-2)(15,4)
\psline[linewidth=.1](15,-2)(11.34,-5.66)\pscircle*(15,-2){.3}
\rput(19.5,2.5){\sm{$X_i$}}\rput(11.5,-1){\sm{$X_j$}}\rput(17,-5){\sm{$X_k$}}
\rput(22.2,-2.1){\sm{$X_{ik}$}}\rput(15.1,4.8){\sm{$X_{ij}$}}
\rput(10.8,-6.1){\sm{$X_{jk}$}}\rput(16.3,-1.2){\sm{$X_{ijk}$}}
\end{pspicture}
\caption{A 3-fold simple crossings configuration and variety.}
\label{3conf_fig}
\end{figure}

\noindent
For an $N$-fold transverse configuration as in Definition~\ref{TransConf_dfn1}, let
\begin{gather}\label{Xesetdfn_e}
X_{\eset}=\bigg(\bigsqcup_{i=1}^NX_i\bigg)\bigg/\!\!\sim, \quad
X_i\ni x\sim x\in X_j~~\forall~x\in X_{ij}\subset X_i,X_j,~i\neq j\,,\\
\label{Xprtdfn_e}
X_{\prt}\equiv\bigcup_{I\in\cP(N),|I|=2}\hspace{-.25in}\!X_I\subset X_{\eset}\,.
\end{gather}
The SC variety~$X_{\eset}$ associated to a 3-fold SC configuration is shown
in Figure~\ref{3conf_fig}.
For $k\!\in\!\Z^{\ge0}$, we call a tuple $(\om_i)_{i\in[N]}$ a \sf{$k$-form on~$X_{\eset}$}
if $\om_i$ is a $k$-form on~$X_i$ for each $i\!\in\![N]$ and 
$$\om_i\big|_{X_{ij}}=\om_j\big|_{X_{ij}} \qquad\forall~i,j\!\in\![N].$$

\vspace{.2in}

\noindent
For $\X$ as in Definition~\ref{TransConf_dfn2}, 
let $\Symp(\X)$ denote the space of all symplectic structures 
on $\X$ and
\BE{Sympdfn_e2}
\Symp^+\big(\X\big)= \Symp\big(\X\big)
\cap \prod_{i=1}^N\Symp^+\big(X_i,\{X_{ij}\}_{j\in[N]-i}\big)\,.\EE
Thus, if $(\om_i)_{i\in[N]}$ is an element of $\Symp^+(\X)$,
then $\{X_{ij}\}_{j\in[N]-i}$ is an SC symplectic divisor in $(X_i,\om_i)$
for each $i\!\in\![N]$.

\begin{dfn}\label{SCC_dfn}
Let $N\!\in\!\Z^+$.
An \sf{$N$-fold simple crossings} (or \sf{SC}) \sf{symplectic configuration} 
is a~tuple 
\BE{SCCdfn_e}\X=\big((X_I)_{I\in\cP^*(N)},(\om_i)_{i\in[N]}\big)\EE
such that $\{X_I\}_{I\in\cP^*(N)}$ is an $N$-fold transverse configuration,
$X_{ij}$ is a closed submanifold of~$X_i$ of codimension~2
for all $i,j\!\in\![N]$ distinct, and
$(\om_i)_{i\in[N]}\in\Symp^+(\X)$.
The \sf{SC symplectic variety associated~to} such a tuple~$\X$ 
is the pair~$(X_{\eset},(\om_i)_{i\in[N]})$.
\end{dfn}

\noindent
Suppose $(X,\om)$ is a compact symplectic manifold 
and $V\!\subset\!X$ is a smooth symplectic divisor, 
i.e.~$|S|\!=\!1$ in the notation of Definition~\ref{SCD_dfn}.
Fix an identification~$\Psi$ of a tubular neighborhood~$D_X^{\ep}V$ of~$V$ in~$\cN_XV$ 
with a tubular neighborhood of~$V$ in~$X$ (i.e.~a \sf{regularization} of~$V$ in~$X$
in the sense of Definition~\ref{smreg_dfn}) and
an $\om$-tame complex structure~$\fI$ on~$\cN_XV$. 
Let
\begin{gather}
\label{cOXVdfn_e}
\cO_X(V)= \big(\Psi^{-1\,*}\pi_{\cN_XV}^*\cN_XV|_{\Psi(D_X^{\ep}V)} \sqcup 
(X\!-\!V)\!\times\!\C\big)\big/\!\!\sim\,\lra X,\\
\notag
\Psi^{-1\,*}\pi_{\cN_XV}^*\cN_XV|_{\Psi(D_X^{\ep}V)} 
  \ni\big(\Psi(v),v,cv\big)\sim\big(\Psi(v),c\big)\in (X\!-\!V)\!\times\!\C.
\end{gather}
This is a complex line bundle over $X$ with $c_1(\cO_X(V))\!=\!\PD_X([V]_X$),
where $[V]_X$ is the homology class in~$X$ represented by~$V$.
The space of pairs~$(\Psi,\fI)$ involved in explicitly constructing this line bundle
is contractible.\\

\noindent
Suppose $\X$ is an SC symplectic configuration as in~\eref{SCCdfn_e}.
If $i,j,k\!\in\![N]$ are distinct, the inclusion
$(X_{jk},X_{ijk})\!\lra\!(X_j,X_{ij})$ induces an isomorphism
\BE{restrisom_e}\cN_{X_{jk}}X_{ijk}\!\equiv\!\frac{TX_{jk}|_{X_{ijk}}}{TX_{ijk}}
\stackrel{\approx}{\lra} 
\frac{TX_j|_{X_{ijk}}}{TX_{ij}|_{X_{ijk}}}\!\equiv\! \cN_{X_j}X_{ij}\big|_{X_{ijk}}\EE
of rank~2 real vector bundles over~$X_{ijk}$;
this is a special case of the second isomorphism in~\eref{cNorient_e2} for \hbox{$X\!=\!X_j$}.
Thus, the rank~2 real vector bundles $\cN_{X_j}X_{ij}|_{X_{ijk}}$ and $\cN_{X_k}X_{ik}|_{X_{ijk}}$
are canonically identified with~$\cN_{X_{jk}}X_{ijk}$.
Let 
$$\Psi_{ij;j}\!: \cN_{ij;j}'\lra X_j, \qquad i,j\!\in\![N],\,i\!\neq\!j,$$
be a collection of
identifications of tubular neighborhoods of~$X_{ij}$ in~$\cN_{X_j}X_{ij}$
and in~$X_j$ so~that
\BE{restrisom_e3a}\Psi_{ij;j}\big|_{\cN_{ij;j}'\cap\cN_{X_{jk}}X_{ijk}}= 
\Psi_{ik;k}\big|_{\cN_{ik;k}'\cap\cN_{X_{jk}}X_{ijk}}\EE
for all $i,j,k\!\in\![N]$ with $k,j\!\neq\!i$.\\

\noindent
Since the intersection and $\om_j$-orientations of 
$$X_{ijk}=X_{ij}\cap X_{jk}\subset X_j$$ 
agree, the isomorphism~\eref{restrisom_e} is orientation-preserving
with respect to the orientations induced by $(\om_j|_{X_{jk}})|_{\cN_{X_{jk}}X_{ijk}}$
and $\om_j|_{\cN_{X_j}X_{ij}}$.
Thus, we can choose a collection of $\om_j$-tame complex structures~$\fI_{ij;j}$ on
the vector bundles~$\cN_{ij;j}$ so~that
\BE{restrisom_e3b}
\fI_{ij;j}\big|_{\cN_{X_{jk}}X_{ijk}}=\fI_{ik;k}\big|_{\cN_{X_{jk}}X_{ijk}}\EE
for all $i,j,k\!\in\![N]$ with $k,j\!\neq\!i$.\\

\noindent
For $i,j\!\in\![N]$ distinct, let $\cO_{X_j}(X_{ij})$ be the complex line bundle
over~$X_j$ constructed as in~\eref{cOXVdfn_e} using the identification~$\Psi_{ij;j}$
and the complex structure~$\fI_{ij;j}$.
By~\eref{restrisom_e3a} and~\eref{restrisom_e3b}, there are canonical identifications
\BE{cOrestr_e}\cO_{X_j}(X_{ij})\big|_{X_{ijk}}=\cO_{X_{jk}}(X_{ijk})=
\cO_{X_k}(X_{ik})\big|_{X_{ijk}}\EE
for all $i,j,k\!\in\![N]$ with $j,k\!\neq\!i$.
For each $i\!\in\![N]$,
\begin{gather}
\label{cOXXidfn_e}
\cO_{X_i^c}(X_i)\equiv\bigg(\bigsqcup_{j\in[N]-\{i\}}\hspace{-.2in}\cO_{X_j}(X_{ij})\bigg)
\!\!\bigg/\!\!\!\sim\,\lra 
X_i^c\!\equiv\bigcup_{j\in[N]-\{i\}}\hspace{-.2in}X_j\subset X_{\eset}\,,\\
\notag
\cO_{X_j}(X_{ij})\big|_{X_{ijk}} \ni u\sim u\in  \cO_{X_k}(X_{ik})\big|_{X_{ijk}}
\quad\forall\,i,j,k\!\in\![N],\,j,k\!\neq\!i,
\end{gather}
is thus a well-defined complex line bundle.
Let $\cO_{X_{\prt}}(X_i)\!=\!\cO_{X_i^c}(X_i)|_{X_{\prt}}$.\\

\noindent
We call the complex line bundle
\BE{PsiDfn_e}
\cO_{X_{\prt}}(X_{\eset})\equiv\bigotimes_{i=1}^N\cO_{X_{\prt}}(X_i)\EE
the \sf{normal bundle} of the singular locus~$X_{\prt}$ in~$X_{\eset}$.
The space of the collections of pairs~$(\Psi_{ij;j},\fI_{ij;j})$ 
involved in explicitly constructing this line bundle is contractible.
By~\eref{cOrestr_e},
$$\cO_{X_{\prt}}(X_{\eset})\big|_{X_{ij}}=
\cN_{X_i}X_{ij}\otimes\cN_{X_j}X_{ij}\otimes
\bigotimes_{k\in[N]-\{i,j\}}\hspace{-.22in}\cO_{X_{ij}}(X_{ijk})
\quad\forall\,i,j\!\in\![N],\,i\!\neq\!j.$$
In the $N\!=\!2$ case, this line bundle is the left-hand side of~\eref{cOGfbdl_e}.

\subsection{Statement}
\label{SympSumThm_subs}

\noindent
We now describe the setup for our smoothing/sum construction in the symplectic
topology category.
Theorem~\ref{SympSum_thm} provides a necessary and sufficient topological condition
for when it can be carried out.

\begin{dfn}\label{SimpFibr_dfn}
If $(\cZ,\om_{\cZ})$ is a symplectic manifold and $\De\!\subset\!\C$
is a disk centered at the origin,
a smooth surjective map $\pi\!:\cZ\!\lra\!\De$ is a \textsf{nearly regular symplectic fibration}~if 
\begin{enumerate}[label=$\bullet$,leftmargin=*]

\item $\cZ_0\!\equiv\!\pi^{-1}(0)\!=\!X_1\!\cup\!\ldots\!\cup\!X_N$ for some SC symplectic divisor
$\{X_i\}_{i\in[N]}$ in~$(\cZ,\om_{\cZ})$, 
\item  $\pi$ is a submersion outside of the submanifolds $X_I$ with $|I|\!\ge\!2$,

\item for every $\la\!\in\!\De\!-\!\{0\}$, 
the restriction~$\om_{\la}$ of~$\om_{\cZ}$ to $\cZ_{\la}\equiv\pi^{-1}(\la)$ is nondegenerate.

\end{enumerate} 
\end{dfn}

\vspace{.1in}

\noindent
We call a nearly regular symplectic fibration as in Definition~\ref{SimpFibr_dfn} 
a \sf{one-parameter family of smoothings} of the SC variety $(X_{\eset},(\om_i)_{i\in[N]})$
associated to the SC symplectic configuration~\eref{SCCdfn_e} with
$$X_I=\bigcap_{i\in I}X_i\subset X_{\eset}\!=\!\cZ_0\subset\cZ
\qquad \forall~I\!\in\!\cP^*(N)\,.$$
We call (the deformation equivalence class of)
an SC symplectic variety $(X_{\eset},(\om_i)_{i\in[N]})$ \sf{smoothable}
if some SC~symplectic variety $(X_{\eset},(\om_i')_{i\in[N]})$ deformation equivalent
to $(X_{\eset},(\om_i)_{i\in[N]})$ admits  a one-parameter family of smoothings.
Theorem~\ref{SympSum_thm} below provides a necessary and sufficient topological condition 
for the smoothability of an~SC symplectic variety.

\begin{thm}\label{SympSum_thm}
Let $\X$ be an $N$-fold SC symplectic configuration as in~\eref{SCCdfn_e}.
The associated SC symplectic variety $(X_{\eset},(\om_i)_{i\in[N]})$ is smoothable
if and only~if the normal bundle $\cO_{X_{\prt}}(X_{\eset})$ of its singular locus
is trivializable.
Furthermore, the germ of the deformation equivalence class of 
the smoothing $(\cZ,\om_{\cZ},\pi)$ provided by the proof of this statement 
is determined by a homotopy class of trivializations of~$\cO_{X_{\prt}}(X_{\eset})$.
If in addition $X_{\prt}$ is compact,  
the deformation equivalence class of a smooth fiber~$(\cZ_{\la},\om_{\la})$
is also determined by a homotopy class of these trivializations.
\end{thm}

\begin{rmk}\label{SmClassif_rmk}
In a future paper, we expect to show that the deformation equivalence class of {\it any}
one-parameter family of smoothings of $(X_{\eset},(\om_i)_{i\in[N]})$ 
corresponds to a homotopy class of trivializations of~\eref{PsiDfn_e}.
This is equivalent to every such smoothing being equivalent to 
a smoothing as constructed in Section~\ref{SympSumPf_sec}.
\end{rmk}

\noindent
By standard \v{C}ech cohomology considerations and~\eref{PsiDfn_e}, 
the complex line bundle $\cO_{X_{\prt}}(X_{\eset})$ is trivializable
if and only if
\BE{SympSumCond_e} \sum_{i=1}^N c_1\big(\cO_{X_{\prt}}(X_i)\big)=0 \in 
\check{H}^2(X_{\prt};\Z)\,.\EE
By \cite[Corollary~6.9.5]{Spanier}, the \v{C}ech and singular cohomologies
of $X_{\prt}$ (as well as of all other spaces in this paper) are canonically
isomorphic.\\

\noindent
If $N\!=\!1$, \eref{SympSumCond_e} imposes no condition.
In this case,
we can take $(\cZ,\om_{\cZ})$ to be the product symplectic manifold 
$(X_1,\om_1)\!\times\!(\De,\om_{\C})$,
where $\om_{\C}$ is the standard symplectic form on~$\C$.
The $N\!=\!2$ case of Theorem~\ref{SympSum_thm} is the symplectic sum
construction of~\cite{Gf,MW} for the SC symplectic variety~\eref{Gfconf_e2}.
It glues two symplectic manifolds $(X_1,\om_1)$ and $(X_2,\om_2)$ 
along normal circle bundles of a common symplectic divisor $(X_{12},\om_{12})$
if 
\begin{equation*}\begin{split}
c_1\big(\cO_{X_{\prt}}(X_1)\big)+c_1\big(\cO_{X_{\prt}}(X_2)\big)
&\equiv c_1\big(\cN_{X_1}X_{12}\big)+c_1\big(\cN_{X_2}X_{12}\big)\\
&=0\in H^2(X_{12};\Z)\equiv H^2(X_{\prt};\Z),
\end{split}\end{equation*}
i.e.~\eref{Gfcond_e} is satisfied.\\

\noindent
In general, the condition~\eref{SympSumCond_e} implies that
\BE{SympSumCond_e1}
c_1\big(\cN_{X_i}X_{ij}\big)+ c_1\big(\cN_{X_j}X_{ij}\big)
+\sum_{k\in[N]-\{i,j\}}\hspace{-.22in}\big[X_{ijk}\big]_{X_{ij}}=0
\qquad\forall~i,j\in[N],~i\neq j.\EE
The latter implies the former if at most one of the restriction homomorphisms
\BE{SympSumCond_e1b}H^1(X_{ij};\Z)\lra \bigoplus_{k\in[N]-\{i,j\}}\hspace{-.22in}H^1(X_{ijk};\Z),
\qquad i,j\in[N],~i\neq j,\EE
is not surjective, but not in general; see Example~\ref{NonSurj_eg}.
In the most basic case of the $N\!=\!3$ situation of Theorem~\ref{SympSum_thm}
with $X_{ij}$ and~$X_{ik}$ being symplectic surfaces in a 4-dimensional manifold~$X_i$
intersecting transversely and positively at a single point,
the conditions~\eref{SympSumCond_e} and~\eref{SympSumCond_e1} reduce to the simple
condition on the self-intersection numbers of these surfaces stated in
\cite[Theorem~2.7]{Symington3}.\\

\noindent
The algebro-geometric analogue of~\eref{SympSumCond_e},
$$\cO_{X_{\prt}}(X_{\eset})\approx\cO_{X_{\prt}}\,,$$
is called \sf{$d$-semistability} in \cite[Definition~(1.13)]{F}.
It is well-known to be an obstruction to the existence of a one-parameter family 
of smoothings of~$X_{\eset}$ in the algebraic geometry category; 
see \cite[Corollary~(1.12)]{F}.
As shown in~\cite{PP}, it is not the only obstruction  in the algebraic category, 
even in the $N\!=\!2$ case.
The algebro-geometric analogue of~\eref{SympSumCond_e1},
$$\cN_{X_i}X_{ij}\otimes \cN_{X_j}X_{ij} \otimes
\bigotimes_{k\in[N]-\{i,j\}}\hspace{-.22in}\cO_{X_{ij}}(X_{ijk})
\approx \cO_{X_{ij}} \qquad\forall~i,j\in[N],~i\neq j\,,$$
is known as \sf{the triple point condition}; see \cite[Proposition~2.4.3]{P}.\\

\noindent
As in the $N\!=\!2$ case of Theorem~\ref{SympSum_thm} addressed in~\cite{Gf},
the construction of \hbox{$\pi\!:(\cZ,\om_{\cZ})\!\lra\!\De$} involves some auxiliary data
for~$\X$ and a compatible choice of a trivialization of the complex line bundle~\eref{PsiDfn_e}
in a given homotopy class. 
We call the former regularizations and recall their definition in 
Sections~\ref{SCDregul_subs} and~\ref{SCCregul_subs}.
Proposition~\ref{SCC_prp}, proved in Section~\ref{SCCprpPf_subs}, ensures that 
each homotopy class of trivializations of~\eref{PsiDfn_e} contains a representative
compatible with a given regularization for~$\X$.
The main part of the proof of Theorem~\ref{SympSum_thm} is carried out in
Section~\ref{SympSumPf_sec}, where the chosen auxiliary data for~$\X$
and a compatible trivialization of~\eref{PsiDfn_e} are used to construct a one-parameter family
\hbox{$\pi\!:\cZ\!\lra\!\De$} of smoothings of $(X_{\eset},(\om_i')_{i\in[N]})$.
By Proposition~\ref{InducedIsom_prp} proved in Section~\ref{SympSumConv_subs},
every one-parameter family $\pi\!:\cZ\!\lra\!\De$ 
of smoothings of $(X_{\eset},(\om_i')_{i\in[N]})$ 
determines a homotopy class of trivializations of~\eref{PsiDfn_e}.
By Proposition~\ref{PhiExt_prp} proved in Section~\ref{TrivExt_subs},
the homotopy class determined by the family constructed in Section~\ref{SympSumPf_sec}   
is the homotopy class used to construct~it.

\subsection{Examples}
\label{SCCeg_subs}

\noindent
We now give two examples.
The first one describes a 3-fold case of Theorem~\ref{SympSum_thm}.
The second example shows that condition~\eref{SympSumCond_e1} is in general weaker
than condition~\eref{SympSumCond_e}.

\begin{eg}\label{P2cut_eg}
Let $\wh\P^2$ be the blowup of $\P^2$ at a point~$p$,
$E,\bar{L}\!\subset\!\wh\P^2$ be the exceptional divisor and
the proper transform of a line through~$p$, respectively, and $P\!=\!E\!\cap\!\bar{L}$.
We take $X_1,X_2,X_3\!=\!\wh\P^2$ and identify 
$E\!\subset\!X_1$ with $\bar{L}\!\subset\!X_2$, 
$E\!\subset\!X_2$ with $\bar{L}\!\subset\!X_3$, 
and $E\!\subset\!X_3$ with $\bar{L}\!\subset\!X_1$;
see Figure~\ref{P2cut_fig}.
By adjusting the size of the blowup as in \cite[Section~7.1]{MS1}, 
we can ensure that the identifications can be made symplectically.
Since 
$$\blr{c_1(\cN_{\wh\P^2}E),E}=-1 \qquad\hbox{and}\qquad
\blr{c_1(\cN_{\wh\P^2}\bar{L}),\bar{L}}=0,$$
the resulting 3-fold configuration $((X_I)_{I\in\cP^*(3)},(\om_i)_{i\in[3]})$ 
satisfies~\eref{SympSumCond_e1}
for all $i,j\!\in\![3]$ distinct.
Since all three homomorphisms~\eref{SympSumCond_e1b} are surjective in this case, 
this configuration thus satisfies~\eref{SympSumCond_e}.
The singular locus~$X_{\prt}$ of the NC symplectic variety~$X_{\eset}$ 
 to be smoothed out consists of 3 copies of~$\P^1$ 
with one point in common.
Since $X_{\prt}$ is simply connected, there is only one homotopy class
of trivializations of~\eref{PsiDfn_e}.
The symplectic deformation equivalence class (of a smooth fiber) of
the corresponding 3-fold symplectic sum is~$\P^2$.  
This is illustrated in the second diagram in Figure~\ref{P2cut_fig}
from the symplectic cut perspective of~\cite{SympCutMulti}
applied in a toric setting (the big triangle corresponds to~$\P^2$).
\end{eg}

\begin{figure}
\begin{pspicture}(0,-2)(11,2)
\psset{unit=.3cm}
% 2nd diagram starts here
\psline[linewidth=.1](15,-2)(22,-2)\psline[linewidth=.1](15,-2)(15,5)
\psline[linewidth=.1](15,-2)(10.5,-6.5)\pscircle*(15,-2){.3}
\rput(19.5,2.5){\sm{$\wh\P^2$}}\rput(11.5,-1){\sm{$\wh\P^2$}}\rput(17,-5){\sm{$\wh\P^2$}}
\rput(21.5,-2.9){\sm{$E$}}\rput(21.5,-1.1){\sm{$\bar{L}$}}
\rput(14.2,4.5){\sm{$\bar{L}$}}\rput(15.7,4.5){\sm{$E$}}
\rput(10,-5.7){\sm{$E$}}\rput(12,-6.1){\sm{$\bar{L}$}}
\rput(15.8,-1.2){\sm{$P$}}
% 2nd diagram
\psline[linewidth=.1](35,-6)(45,-6)\psline[linewidth=.1](35,-6)(35,4)
\psline[linewidth=.1](45,-6)(35,4)
\psline[linewidth=.1](35,-2)(38,-2)\psline[linewidth=.1](38,1)(38,-2)
\psline[linewidth=.1](42,-6)(38,-2)\pscircle*(38,-2){.3}
\rput(40.6,-3.4){\sm{$E$}}\rput(39.3,-4.4){\sm{$\bar{L}$}}
\rput(37.2,-.7){\sm{$E$}}\rput(38.5,-.7){\sm{$\bar{L}$}}
\rput(35.8,-2.8){\sm{$E$}}\rput(35.8,-1.2){\sm{$\bar{L}$}}
\end{pspicture}
\caption{The NC variety of Example~\ref{P2cut_eg} and 
a toric representation of the corresponding symplectic sum.}
\label{P2cut_fig}
\end{figure}
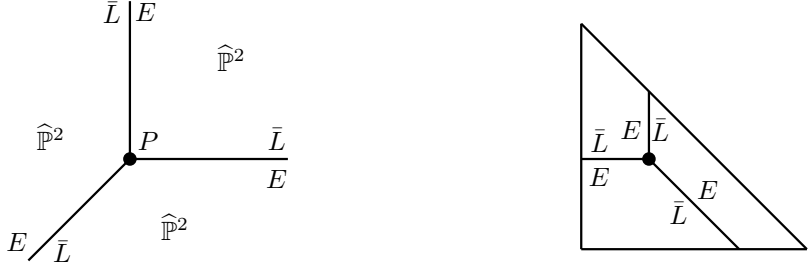

\begin{eg}\label{NonSurj_eg}
Let $X_1\!=\!\P^3$ with its standard symplectic form, 
$X_{12}\!\approx\!\P^2$ be a linear subspace, and 
$X_{13}\!\subset\!\P^3$ be a cubic surface transverse to~$X_{12}$.
The intersection~$X_{123}$ of~$X_{12}$ and~$X_{13}$ is then
a plane cubic, i.e.~a genus~1 curve.
For $i\!=\!2,3$, define 
\begin{gather*}
\cL_{1i}=\cN_{X_1}X_{1i}\otimes\cO_{X_{1i}}(X_{123})\approx\cO_{\P^3}(4)\big|_{X_{1i}}\lra X_{1i}\,,\\
X_i=\P\big(\cL_{1i}\!\oplus\!\cO_{X_{1i}}\big), \qquad
X_{1i}^{\i}=\P\big(\cL_{1i}\!\oplus\!0\big)\approx X_{1i}\,;
\end{gather*}
see the left diagram in Figure~\ref{NonSurj_fig}.
There are canonical isomorphisms
\BE{cL1213_e}\cL_{12}|_{X_{123}}\approx\big(\cN_{X_1}X_{12}\!\otimes\!\cN_{X_1}X_{13}\big)|_{X_{123}}
\approx\big(\cN_{X_1}X_{13}\!\otimes\!\cN_{X_1}X_{12}\big)|_{X_{123}}
\approx\cL_{13}|_{X_{123}}\,.\EE
The homotopy classes of all isomorphisms $\cL_{12}|_{X_{123}}\!\approx\!\cL_{13}|_{X_{123}}$
in the category of complex (not holomorphic) line bundles correspond to the homotopy classes
of continuous functions $X_{123}\!\lra\!S^1$, i.e.~to the elements~of
$$H^1(X_{123};\Z)\approx\Z^2\,.$$
Any such isomorphism~$\rho$ induces an identification 
$$\psi_{23}^{\rho}\!: \P(\cL_{12}\!\oplus\!\C)\big|_{X_{123}}\lra 
\P(\cL_{13}\!\oplus\!\C)\big|_{X_{123}}\,,$$
which can be assumed to be holomorphic by pushing the holomorphic structure forward;
the zero element of $H^1(X_{123};\Z)$ corresponds to the identification induced
by~\eref{cL1213_e}.
The SC variety
$$X_{\eset}^{\rho} = X_1\!\cup\!X_2\!\cup\!X_3,\qquad
X_{23}\equiv
\P(\cL_{12}\!\oplus\!\cO_{X_{12}})\big|_{X_{123}}\!\sim_{\psi_{23}^\rho}\!\!
\P(\cL_{13}\!\oplus\!\cO_{X_{13}})\big|_{X_{123}}\,,$$
is then K\"ahler and satisfies the vanishing condition in~\eref{SympSumCond_e1} 
over~$X_{12}$ and~$X_{13}$.
If $\rho$ corresponds to a nonzero element of $H^1(X_{123};\Z)$, 
then~\eref{SympSumCond_e} is not satisfied over $X_{12}\!\cup\!X_{13}$ 
because the connecting homomorphism~$\de$ in the exact sequence
$$H^1(X_{12};\Z)\oplus H^1(X_{13};\Z)\lra H^1(X_{123};\Z)
\stackrel\de\lra H^2\big(X_{12}\!\cup\!X_{13};\Z\big)$$
is injective ($X_{12}$ and $X_{13}$ are simply connected).
\begin{figure}
\begin{pspicture}(-.5,-2)(11,2)
\psset{unit=.3cm}
\psline[linewidth=.1](15,-2)(22,-2)\psline[linewidth=.1](15,-2)(15,5)
\psline[linewidth=.1](15,-2)(10.7,-6.3)\pscircle*(15,-2){.3}
\rput(19.5,2.5){\sm{$\P^3$}}\rput(12.5,2){\sm{$X_2$}}\rput(19,-4.5){\sm{$X_3$}}
\rput(22.2,-1.2){\sm{$X_{13}$}}\rput(22.2,-3){\sm{$X_{13}^{\i}$}}
\rput(16,4.8){\sm{$\P^2$}}\rput(13.8,4.8){\sm{$X_{12}^{\i}$}}
\rput{45}(12.1,-3.9){\sm{$X_{23}$}}\rput{45}(13.2,-5){\sm{$X_{23}$}}
\rput(16.4,-1.2){\sm{$X_{123}$}}\rput(16,-3){\sm{$X_{123}^{\i}$}}
\psline[linewidth=.05](10.7,-6.3)(17.7,-6.3)\psline[linewidth=.05](10.7,-6.3)(10.7,.7)
\pscircle*(10.7,-6.3){.2}\rput(10,-7){\sm{$X_{123}^0$}}
% 2nd diagram starts here
\psline[linewidth=.1](35,-2)(42,-2)\psline[linewidth=.1](35,-2)(35,5)
\psline[linewidth=.1](35,-2)(30.7,-6.3)\pscircle*(35,-2){.3}
\rput(39.5,2.5){\sm{$\P^3$}}\rput(31.5,2){\sm{$\wh{X}_2$}}\rput(39,-4.5){\sm{$X_3$}}
\rput(42.2,-1.2){\sm{$X_{13}$}}\rput(42.2,-3){\sm{$X_{13}^{\i}$}}
\rput(36,4.8){\sm{$\P^2$}}\rput(33.8,4.8){\sm{$X_{12}^{\i}$}}
\rput{45}(32.1,-3.9){\sm{$X_{23}$}}\rput{45}(33.2,-5){\sm{$X_{23}$}}
\rput(36.4,-1.2){\sm{$X_{123}$}}\rput(36,-3){\sm{$X_{123}^{\i}$}}
\psline[linewidth=.05](30.7,-6.3)(37.7,-6.3)\psline[linewidth=.05](28.7,-4.3)(28.7,2.7)
\psline[linewidth=.05](30.7,-6.3)(28.7,-4.3)
\pscircle*(30.7,-6.3){.2}\rput(30,-7){\sm{$X_{123}^0$}}
\end{pspicture}
\caption{The two NC varieties in Example~\ref{NonSurj_eg}.}
\label{NonSurj_fig}
\end{figure}
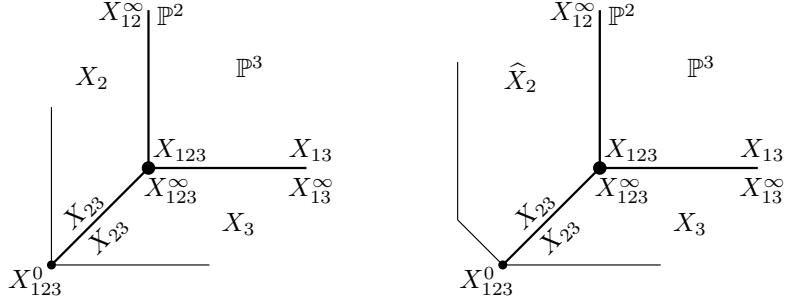
Let 
$$X_{123}^0=\P\big(0\!\oplus\!\cO_{X_{123}}\big)\,, ~
X_{123}^{\i}=\P(\cL_{12}\!\oplus\!0)|_{X_{123}}\subset 
\P(\cL_{12}\!\oplus\!\cO_{X_{12}})|_{X_{123}}\,,$$
and $\pi\!:X_{23}\!\lra\!X_{123}$ be the projection map.
Using 
$$\cN_{X_i}X_{23}=\pi^*\cN_{X_{1i}}X_{123}
=\pi^*\big(\cO_{\P^3}(7\!-\!2i)|_{X_{123}}\big)
\qquad\hbox{for}~~ i=2,3,$$
we find that 
$$\cN_{X_2}X_{23}\otimes \cN_{X_3}X_{23}\otimes \cO_{X_{23}}(X_{123})
=\pi^*\big(\cO_{\P^3}(4)|_{X_{123}}\big)\otimes \cO_{X_{23}}(X_{123}^{\i})
\neq \cO_{X_{23}}\,.$$
In order to achieve \eref{SympSumCond_e1} over all smooth strata of~$X_{\prt}$, 
we replace~$X_2$ by its blowup~$\wh{X}_2$ along~$X_{123}^0$;
see the right diagram in Figure~\ref{NonSurj_fig}.
The proper transform of~$X_{23}$ is still~$X_{23}$, but its normal bundle in $\wh{X}_2$~is
$$\cN_{X_2}X_{23}\otimes \cO_{X_{23}}(-X_{123}^0)
=\cN_{X_2}X_{23}\otimes \cO_{X_{23}}(-X_{123}^{\i}) 
\otimes \pi^*\big(\cO_{\P^3}(-4)|_{X_{123}}\big)\,.$$
Thus, the modified 3-fold SC symplectic configuration satisfies
the vanishing condition in~\eref{SympSumCond_e1} over all smooth strata of~$X_{\prt}$.
\end{eg}

\section{Regularizations}
\label{SCD_sec}

\noindent
In \cite[Sections~2.2,2,3]{SympDivConf}, we introduced
notions of symplectic regularizations for an SC divisor
$\{V_i\}_{i\in S}$ in~$X$ and an SC configuration~$\X$;
we recall them in Sections~\ref{SCDregul_subs} and~\ref{SCCregul_subs}.
Such regularizations provide essential auxiliary data for
the symplectic smoothing/sum construction of Theorem~\ref{SympSum_thm},
just as they did in the $N\!=\!2$ case addressed in \cite{Gf,MW}.
By \cite[Theorem~2.17]{SympDivConf},
the space  $\Symp^+(\X)$ defined in Section~\ref{SCdfn_subs}
is weakly homotopy equivalent to the space $\Aux(\X)$ of pairs consisting
 of an element of $\Symp^+(\X)$  and a compatible regularization.
Proposition~\ref{SCC_prp}, which is established in Section~\ref{SCCprpPf_subs},
adjusts this weak homotopy equivalence property to incorporate 
trivializations of~\eref{PsiDfn_e}; 
see also \eref{SCCproj_e} and Remark~\ref{SCC_rmk}.

\subsection{SC divisors}
\label{SCDregul_subs}

\noindent
If $B$ is a manifold, possibly with boundary, and $k\!\in\!\Z^{\ge0}$,
we call a family $(\om_t)_{t\in B}$ of $k$-forms on~$X$ \sf{smooth} 
if the $k$-form~$\wt\om$ on~$B\!\times\!X$ given~by
$$\wt\om_{(t,x)}(v_1,\ldots,v_k)=
\begin{cases}
\om_t|_x(v_1,\ldots,v_k),&\hbox{if}~v_1,\ldots\!v_k\!\in\!T_xX;\\
0,&\hbox{if}~v_1\!\in\!T_tB;
\end{cases}$$
is smooth. 
Smoothness for families of other objects is defined similarly.\\

\noindent
We  call $\pi\!:(L,\rho,\na)\!\lra\!V$ a \sf{Hermitian line bundle} if
$V$ is a manifold, $L\!\lra\!V$ is a smooth complex line bundle,
$\rho$ is a Hermitian metric on~$L$, 
and $\na$ is a $\rho$-compatible connection on~$L$.
We use the same notation~$\rho$ to denote the square of the norm function on~$L$
and the Hermitian form on~$L$ which is $\C$-antilinear in the second input.
Thus,
$$\rho(v)\equiv\rho(v,v), \quad 
\rho(\fI v,w)=\fI\rho(v,w)=-\rho(v,\fI w) 
\qquad\forall~(v,w)\!\in\!L\!\times_V\!L.$$
Let $\rho^{\R}$ denote the real part of~$\rho$.
Each triple $(L,\rho,\na)$ as above induces a \sf{connection 1-form}~$\al_{\rho,\na}$
on the principal $S^1$-bundle $SL$ of $\rho$-unit vectors; see Appendix~\ref{conn_app}.
Via the canonical retraction $L\!-\!V\!\lra\!SL$, $\al_{\rho,\na}$ extends 
to a 1-form on $L\!-\!V$. 
A smooth map $h\!:V'\!\lra\!V$ pulls back a Hermitian line bundle $(L,\rho,\na)$
over~$V$ to a Hermitian line bundle
$$h^*(L,\rho,\na)\equiv (h^*L,h^*\rho,h^*\na)\lra V'.$$

\vspace{.1in}

\noindent
A Riemannian metric on an oriented  real vector bundle \hbox{$L\!\lra\!V$} of rank~2
determines a complex structure on the fibers of~$L$.
A \sf{Hermitian structure} on an oriented  real vector bundle \hbox{$L\!\lra\!V$} of rank~2
is a pair $(\rho,\na)$ such that $(L,\rho,\na)$ is a Hermitian line bundle
with the complex structure~$\fI_{\rho}$ determined by the Riemannian metric~$\rho^{\R}$.
If $\Om$ is a fiberwise symplectic form on an oriented vector bundle \hbox{$L\!\lra\!V$} of rank~2,
an \sf{$\Om$-compatible Hermitian structure} on~$L$ is a Hermitian structure $(\rho,\na)$ on~$L$ 
such that
$\Om(\cdot,\fI_{\rho}\cdot)=\rho^{\R}(\cdot,\cdot)$.\\

\noindent
If $(L_i,\rho_i,\na^{(i)})_{i\in I}$ is a finite collection of Hermitian line
bundles over a symplectic manifold $(V,\om)$,
$$\pi\!: \cN\equiv\bigoplus_{i\in I}L_i\lra V\,,$$
and $\pr_{I;I-i}\!:\cN\!\lra\!L_i$ is the component projection map for each $i\!\in\!I$,
then 
\BE{ombund_e}\wh\om_{(\rho_i,\na^{(i)})_{i\in I}}^{\bu}\equiv 
\pi^*\om+\frac12
\sum_{i\in I} \pr_{I;I-i}^*\nd\big(\rho_i\al_{\rho_i,\na^{(i)}}\big)\EE
is a well-defined closed 2-form on the total space of $\cN$;
it is nondegenerate on a neighborhood of~$V$ in~$\cN$.
By~\eref{Omvsaldfn_e}, this definition agrees with \cite[(2.10)]{SympDivConf}
whenever $(\rho_i,\na^{(i)})$ is an $\Om_i$-compatible Hermitian structure on~$L_i$\\

\noindent
If $\Psi\!:V'\!\lra\!V$ is an embedding, $I'\!\subset\!I$,
$(L_i,\rho_i,\na^{(i)})_{i\in I}$ is a finite collection of Hermitian line
bundles over~$V$, and 
$(L_i',\rho_i',\na'^{(i)})_{i\in I'}$ is a finite collection of Hermitian line
bundles over~$V'$, a vector bundle homomorphism 
$$\wt\Psi\!: \bigoplus_{i\in I'}L'_i\lra \bigoplus_{i\in I}L_i$$
covering~$\Psi$ is a \sf{product Hermitian inclusion} if 
$$\wt\Psi\!: (L_i',\rho_i',\na'^{(i)}) \lra 
\Psi^*(L_i,\rho_i,\na^{(i)})$$
is an isomorphism of Hermitian line bundles over~$V'$ for every $i\!\in\!I'$.
We  call such a morphism a \sf{product Hermitian isomorphism covering~$\Psi$}
if $|I'|\!=\!|I|$.

\begin{dfn}\label{smreg_dfn}
Let $X$ be a manifold and $V\!\subset\!X$ be a  submanifold
with normal bundle $\cN_XV\!\lra\!V$. 
A \sf{regularization for~$V$ in~$X$} is a diffeomorphism $\Psi\!:\cN'\!\lra\!X$
from a neighborhood of~$V$ in~$\cN_XV$ onto a neighborhood of~$V$ in~$X$ such
that $\Psi(x)\!=\!x$ and the isomorphism
$$ \cN_XV|_x=T_x^{\ver}\cN_XV \lhra T_x\cN_XV
\stackrel{\nd_x\Psi}{\lra} T_xX\lra \frac{T_xX}{T_xV}\equiv\cN_XV|_x$$
is the identity for every $x\!\in\!V$.
\end{dfn}

\noindent
If $(X,\om)$ is a symplectic manifold and $V$ is a  symplectic submanifold in~$(X,\om)$,
then $\om$ induces a fiberwise symplectic form~$\om|_{\cN_XV}$ on
the normal bundle~$\cN_XV$ of~$V$ in~$X$ via the isomorphism~\eref{cNXVsymp_e}.
We denote the restriction of~$\om|_{\cN_XV}$ to a subbundle $L\!\subset\!\cN_XV$
by~$\om|_L$.

\begin{dfn}\label{sympreg1_dfn}
Let $X$ be a  manifold, $V\!\subset\!X$ be a  submanifold, and
$$\cN_XV=\bigoplus_{i\in I}L_i$$
be a fixed splitting into oriented rank~2 subbundles. 
\begin{enumerate}[label=(\arabic*),leftmargin=*]

\item\label{sympreg1_it} 
If $\om$ is a symplectic form on~$X$ such that $V$ is a symplectic submanifold
and $\om|_{L_i}$ is nondegenerate for every $i\!\in\!I$, then
an \sf{$\om$-regularization for~$V$ in~$X$} is a tuple $((\rho_i,\na^{(i)})_{i\in I},\Psi)$, 
where $(\rho_i,\na^{(i)})$ is an $\om|_{L_i}$-compatible Hermitian structure on~$L_i$
for each $i\!\in\!I$ and $\Psi$ is a regularization for~$V$ in~$X$, such that 
$$\Psi^*\om=\wh\om_{(\rho_i,\na^{(i)})_{i\in I}}^{\bu}\big|_{\Dom(\Psi)}.$$

\item\label{sympreg2_it} If $B$ is a  manifold, possibly with boundary, and
$(\om_t)_{t\in B}$ is a smooth family of symplectic forms on~$X$ 
which restrict to symplectic forms on~$V$, then
an \sf{$(\om_t)_{t\in B}$-family of regularizations} for~$V$ in~$X$ 
is a smooth family of tuples 
\BE{sympreg1dfn_e}(\cR_t)_{t\in B} \equiv 
\big((\rho_{t;i},\na^{(t;i)})_{i\in I},\Psi_t\big)_{t\in B}\EE
such that $\cR_t$ is an $\om_t$-regularization for~$V$ in~$X$ for each $t\!\in\!B$
and 
$$\big\{(t,v)\!\in\!B\!\times\!\cN_XV\!:\,v\!\in\!\Dom(\Psi_t)\big\}\lra X,\qquad
(t,v)\lra \Psi_t(v),$$
is a smooth map from a neighborhood of $B\!\times\!V$ in $B\!\times\!\cN_XV$.
\end{enumerate}
\end{dfn}

\noindent
Suppose $\{V_i\}_{i\in S}$ is a transverse collection of codimension~2 submanifolds of~$X$.
For each $I\!\subset\!S$, the last isomorphism in~\eref{cNorient_e2} with $I'\!=\!\eset$
provides 
a natural decomposition 
$$\pi_I\!:\cN_XV_I\!=\!\bigoplus_{i\in I}\cN_{V_{I-i}}V_I  \lra V_I$$
of the normal bundle of~$V_I$ in~$X$ into oriented rank~2 subbundles. 
We take this decomposition as given for the purposes of applying Definition~\ref{sympreg1_dfn}.
If in addition $I'\!\subset\!I$, let
$$\pi_{I;I'}\!:\cN_{I;I'}\equiv 
\bigoplus_{i\in I-I'}\!\!\!\cN_{V_{I-i}}V_I=\cN_{V_{I'}}V_I\lra V_I\,.$$
There are canonical identifications
\BE{cNtot_e}\cN_{I;I-I'}=\cN_XV_{I'}|_{V_I}, \quad
\cN_XV_I=\pi_{I;I'}^*\cN_{I;I-I'}=\pi_{I;I'}^*\cN_XV_{I'}
\qquad\forall~I'\!\subset\!I\!\subset\![N].\EE
The first equality in the second statement above
is used in particular in~\eref{overlap_e}.

\begin{dfn}\label{TransCollReg_dfn}
Let $X$ be a manifold and $\{V_i\}_{i\in S}$ be a transverse collection 
of submanifolds of~$X$.
A \sf{system of regularizations for}  $\{V_i\}_{i\in S}$ in~$X$ is a~tuple 
$(\Psi_I)_{I\subset S}$, where $\Psi_I$ is a regularization for~$V_I$ in~$X$
in the sense of Definition~\ref{smreg_dfn}, such~that
\BE{Psikk_e}
\Psi_I\big(\cN_{I;I'}\!\cap\!\Dom(\Psi_I)\big)=V_{I'}\!\cap\!\Im(\Psi_I)\EE
for all $I'\!\subset\!I\!\subset\!S$.
\end{dfn}

\noindent
Given a system of regularizations as in Definition~\ref{TransCollReg_dfn}
and $I'\!\subset\!I\!\subset\!S$, let
$$\cN_{I;I'}' = \cN_{I;I'}\!\cap\!\Dom(\Psi_I), \qquad
\Psi_{I;I'}\equiv \Psi_I\big|_{\cN_{I;I'}'}\!: \cN_{I;I'}'\lra V_{I'}\,.$$
The map $\Psi_{I;I'}$ is a regularization for $V_I$ in~$V_{I'}$.
As explained in \cite[Section~2.2]{SympDivConf}, $\Psi_I$ determines
an isomorphism
\BE{wtPsiIIdfn_e}
  \fD\Psi_{I;I'}\!:  \pi_{I;I'}^*\cN_{I;I-I'}\big|_{\cN_{I;I'}'}
\lra \cN_XV_{I'}\big|_{V_{I'}\cap\Im(\Psi_I)}\EE
of vector bundles covering~$\Psi_{I;I'}$ and
respecting the natural decompositions of 
$\cN_{I;I-I'}\!=\!\cN_XV_{I'}|_{V_I}$ and $\cN_XV_{I'}$.
By the last assumption in Definition~\ref{smreg_dfn}, 
\BE{wtPsiIIprop_e}\fD\Psi_{I;I'}\big|_{\pi_{I;I'}^*\cN_{I;I-I'}|_{V_I}}\!=\!\id\!:\,
\cN_{I;I-I'}\lra \cN_XV_{I'}|_{V_I}\EE
under the canonical identification of $\cN_{I;I-I'}$ with $\cN_XV_{I'}|_{V_I}$.

\begin{dfn}\label{TransCollregul_dfn}
Let $X$ be a manifold and  $\{V_i\}_{i\in S}$ be a transverse 
collection of submanifolds of~$X$. 
A \sf{regularization for $\{V_i\}_{i\in S}$ in~$X$} 
is a system of regularizations $(\Psi_I)_{I\subset S}$ 
for $\{V_i\}_{i\in S}$ in~$X$ such~that
\BE{overlap_e}
\Dom(\Psi_I)=\fD\Psi_{I;I'}^{\,-1}\big(\Dom(\Psi_{I'})\big), \quad
\Psi_I=\Psi_{I'}\circ\fD\Psi_{I;I'}|_{\Dom(\Psi_I)}\EE
for all $I'\!\subset\!I\!\subset\!S$.
\end{dfn}

\noindent
If $(\Psi_I)_{I\subset S}$ is a regularization for $\{V_i\}_{i\in S}$ in~$X$, 
then \eref{wtPsiIIprop_e} and~\eref{overlap_e} imply that 
\BE{SCDcons_e}\begin{split}
\cN_{I\cup J;I'\cup J}'=\cN_{I;I'}'\big|_{V_{I\cup J}}\,,
&\qquad \Psi_{I\cup J;I'\cup J}=\Psi_{I;I'}\big|_{\cN_{I\cup J;I'\cup J}'}\,,\\
\fD\Psi_{I\cup J;I'\cup J}
\big|_{\pi_{I\cup J;I'\cup J}^*\cN_{I\cup J;(I-I')\cup J}|_{\cN_{I\cup J;I'\cup J}'}}
&=\fD\Psi_{I;I'}\big|_{\pi_{I;I'}^*\cN_{I;I-I'}|_{\cN_{I\cup J;I'\cup J}'}}
\end{split}\EE
for all $I'\!\subset\!I\!\subset\!S$ and $J\!\subset\!S\!-\!I$.
Furthermore,
\BE{SCDcons_e2}
\Psi_{I;I''}=\Psi_{I';I''}\circ\fD\Psi_{I;I'}\big|_{\cN_{I;I''}'}\,, 
\qquad
\fD\Psi_{I;I''}=\fD\Psi_{I';I''}\circ
\fD\Psi_{I;I'}\big|_{\pi_{I;I''}^*\cN_{I;I-I''}|_{\cN_{I;I''}'}}\EE
for all $I''\!\subset\!I'\!\subset\!I\!\subset\!S$.

\begin{dfn}\label{SCDregul_dfn}
Let $X$ be a manifold and  $\{V_i\}_{i\in S}$ be a finite transverse collection 
of closed submanifolds of~$X$ of codimension~2. 

\begin{enumerate}[label=(\arabic*),leftmargin=*]

\item\label{sympregul_it} 
If $\om\in\Symp^+(X,\{V_i\}_{i\in S})$, then 
an \sf{$\om$-regularization for $\{V_i\}_{i\in S}$ in~$X$} 
is a~tuple
\BE{regdfn_e0}
(\cR_I)_{I\subset S} \equiv  
\big((\rho_{I;i},\na^{(I;i)})_{i\in I},\Psi_I\big)_{I\subset S}\EE
such that $\cR_I$ is an $\om$-regularization for~$V_I$ in~$X$ for each $I\!\subset\!S$,
$(\Psi_I)_{I\subset S}$ is a regularization for $\{V_i\}_{i\in S}$ in~$X$,
and the induced vector bundle isomorphisms~\eref{wtPsiIIdfn_e}
are product Hermitian isomorphisms for all $I'\!\subset\!I\!\subset\!S$.

\item\label{sympregul_it2}  If $B$ is a manifold, possibly with boundary, and
$(\om_t)_{t\in B}$ is a smooth family of symplectic forms in $\Symp^+(X,\{V_i\}_{i\in S})$,
then an \sf{$(\om_t)_{t\in B}$-family of regularizations for $\{V_i\}_{i\in S}$ in~$X$}
is a smooth family of~tuples 
\BE{regdfn_e3}(\cR_{t;I})_{t\in B,I\subset S}
\equiv 
\big((\rho_{t;I;i},\na^{(t;I;i)})_{i\in I},\Psi_{t;I}\big)_{t\in B,I\subset S}\EE
such that $(\cR_{t;I})_{I\subset S}$ is an $\om_t$-regularization for $\{V_i\}_{i\in S}$ in~$X$ 
for each $t\!\in\!B$
and $(\cR_{t;I})_{t\in B}$ is
an $(\om_t)_{t\in B}$-family of regularizations for $V_I$ in~$X$ 
for each $I\!\subset\!S$.
\end{enumerate}
\end{dfn}

\subsection{SC varieties}
\label{SCCregul_subs}

\noindent
This section is the analogue of Section~\ref{SCDregul_subs} for SC symplectic configurations,
especially those satisfying the topological condition~\eref{SympSumCond_e}.
Definition~\ref{SCCregul_dfn}\ref{SCCreg2_it} topologizes the set
$\wt\Aux_{\hb}(\X)$ of triples $((\om_i)_{i\in[N]},\fR,\Phi)$ 
consisting of a symplectic structure $(\om_i)_{i\in[N]}$ on
a transverse configuration~$\X$,
an $(\om_i)_{i\in[N]}$-regularization~$\fR$ for~$\X$, and 
a compatible trivialization~$\Phi$ of~\eref{PsiDfn_e} 
in a homotopy class~$\hb$.
By Proposition~\ref{SCC_prp} at the end of this section,
the  projection 
\BE{SCCproj_e}\wt\Aux_{\hb}(\X)\lra \Symp^+_0(\X)\subset \Symp^+(\X), \quad
\big((\om_i)_{i\in[N]},\fR,\Phi\big)\lra(\om_i)_{i\in[N]},\EE
to the space of SC symplectic configurations satisfying~\eref{SympSumCond_e} 
induces isomorphisms on~$\pi_k$ for all $k\!\in\!\Z^{\ge0}\!-\!\{1\}$.\\

\noindent
Suppose $\{X_I\}_{I\in\cP^*(N)}$ is a transverse configuration in the sense 
of Definition~\ref{TransConf_dfn1}.
For each $I\!\in\!\cP^*(N)$ with $|I|\!\ge\!2$, let
$$\pi_I\!:\cN X_I\equiv \bigoplus_{i\in I}\cN_{X_{I-i}}X_I\lra X_I\,.$$
If in addition $I'\!\subset\!I$, let
$$\pi_{I;I'}\!:\cN_{I;I'}\equiv 
  \bigoplus_{i\in I-I'}\!\!\!\cN_{X_{I-i}}X_I\lra X_I\,.$$
By the last isomorphism in~\eref{cNorient_e2} with $X\!=\!X_i$ for any $i\!\in\!I'$ and 
$\{V_j\}_{j\in S}\!=\!\{X_{ij}\}_{j\in[N]-i}$, 
$$	\cN_{I;I'}=\cN_{X_{I'}}X_I \qquad\forall~I'\!\subset\!I\!\subset\![N],~I'\!\neq\!\eset.$$
Similarly to~\eref{cNtot_e}, there are canonical identifications
\BE{cNXIide_e}\cN_{I;I-I'}=\cN X_{I'}|_{X_I}, \quad
\cN X_I=\pi_{I;I'}^*\cN_{I;I-I'}=\pi_{I;I'}^*\cN X_{I'}
\qquad\forall~I'\!\subset\!I\!\subset\![N];\EE
the first and last identities above hold if $|I'|\!\ge\!2$.

\begin{dfn}\label{TransConfregul_dfn}
Let $N\!\in\!\Z^+$ and $\X\!=\!\{X_I\}_{I\in\cP^*(N)}$ be a transverse configuration.
A \sf{regularization for $\X$} is a tuple 
$(\Psi_{I;i})_{i\in I\subset[N]}$, 
where for each $i\!\in\!I$ fixed the tuple $(\Psi_{I;i})_{i\in I\subset[N]}$
is a regularization for $\{X_{ij}\}_{j\in[N]-i}$ in~$X_i$ in the sense of
Definition~\ref{TransCollregul_dfn}, such that
\BE{SCCregCond_e0}
\Psi_{I;i_1}\big|_{\cN_{I;i_1i_2}\cap\Dom(\Psi_{I;i_1})}
=\Psi_{I;i_2}\big|_{\cN_{I;i_1i_2}\cap\Dom(\Psi_{I;i_2})}\EE
for all $i_1,i_2\!\in\!I\!\subset\![N]$.
\end{dfn}

\noindent
Given a regularization as in Definition~\ref{TransConfregul_dfn}
and $I'\!\subset\!I\!\subset\![N]$ with $|I|\!\ge\!2$ and $I'\!\neq\!\eset$, let
\BE{cNconfdfn_e}
\cN_{I;I'}'\!=\!\cN_{I;I'}\!\cap\!\Dom(\Psi_{I;i}),~~
\Psi_{I;I'}\!=\!\Psi_{I;i}|_{\cN_{I;I'}'}\!\!:\cN_{I;I'}'\lra X_{I'}
\qquad\hbox{if}~i\!\in\!I';\EE
by~\eref{SCCregCond_e0}, $\Psi_{I;I'}(v)$ does not depend on the choice of $i\!\in\!I'$.
Let
\BE{fDPsiIIconf_e0}\fD\Psi_{I;i;I'}\!: 
\pi_{I;I'}^*\cN_{I;i\cup(I-I')}\big|_{\cN_{I;I'}'}\lra \cN_{I';i}\big|_{\Im(\Psi_{I;I'})}\EE
be the associated vector bundle isomorphism as in~\eref{wtPsiIIdfn_e}.
If $|I'|\!\ge\!2$, we define an isomorphism of split vector bundles
\BE{fDPsiIIconf_e}\begin{split}
\fD\Psi_{I;I'}\!:\pi_{I;I'}^*\cN_{I;I-I'}\big|_{\cN_{I;I'}'}
&\lra \cN X_{I'}\big|_{\Im(\Psi_{I;I'})}\,, \\
\fD\Psi_{I;I'}\big|_{\pi_{I;I'}^*\cN_{I;i\cup(I-I')}\big|_{\cN_{I;I'}'}}
\!&=\fD\Psi_{I;i;I'} \quad\forall~i\!\in\!I';
\end{split}\EE
by~\eref{SCCregCond_e0}, the last maps agree on the overlaps.\\

\noindent
By \eref{cNconfdfn_e}-\eref{fDPsiIIconf_e} and~\eref{SCDcons_e}, 
\BE{SCCcons_e}\begin{split}
\cN_{I\cup J;I'\cup J}'=\cN_{I;I'}'\big|_{X_{I\cup J}}\,,
\qquad &\Psi_{I\cup J;I'\cup J}=\Psi_{I;I'}\big|_{\cN_{I\cup J;I'\cup J}'}\,,\\
\fD\Psi_{I\cup J;I'\cup J}
\big|_{\pi_{I\cup J;I'\cup J}^*\cN_{I\cup J;(I-I')\cup J}|_{\cN_{I\cup J;I'\cup J}'}}
&=\fD\Psi_{I;I'}\big|_{\pi_{I;I'}^*\cN_{I;I-I'}|_{\cN_{I\cup J;I'\cup J}'}}
\end{split}\EE
for all $I'\!\subset\!I\!\subset\![N]$ and $J\!\subset\![N]\!-\!I$
with $|I|\!\ge\!2$ in all three cases,
$|I'|\!\ge\!1$ in the first two cases, and $|I'|\!\ge\!2$ in the last case. 
By \eref{cNconfdfn_e}, \eref{fDPsiIIconf_e}, and~\eref{SCDcons_e2},
\BE{SCCcons_e2}\begin{split}
\Psi_{I;I''}=\Psi_{I';I''}\circ\fD\Psi_{I;I'}\big|_{\cN_{I;I''}'}\,, 
\quad
\fD\Psi_{I;I''}=\fD\Psi_{I';I''}\circ
\fD\Psi_{I;I'}\big|_{\pi_{I;I''}^*\cN_{I;I-I''}|_{\cN_{I;I''}'}}
\end{split}\EE
for all $I''\!\subset\!I'\!\subset\!I\!\subset\![N]$ with $|I'|\!\ge\!2$ in both cases,
$|I''|\!\ge\!2$ in the first case, and $|I''|\!\ge\!2$ in the second~case.

\begin{dfn}\label{SCCregul_dfn}
Let $N\!\in\!\Z^+$ and $\X\!\equiv\!\{X_I\}_{I\in\cP^*(N)}$ be a transverse configuration.

\begin{enumerate}[label=(\arabic*),leftmargin=*]
\item\label{SCCreg_it} 
If $(\om_i)_{i\in[N]}$ is a symplectic structure on $\X$ in
the sense of Definition~\ref{TransConf_dfn2}, 
an \sf{$(\om_i)_{i\in[N]}$-regularization for~$\X$} is a~tuple
\BE{SCCregdfn_e0}
\fR\equiv (\cR_I)_{I\in\cP^*(N)} \equiv
\big(\rho_{I;i},\na^{(I;i)},\Psi_{I;i}\big)_{i\in I\subset[N]}\EE
such that $(\Psi_{I;i})_{i\in I\subset[N]}$ is a  regularization 
for $\X$ in the sense of Definition~\ref{TransConfregul_dfn} and
for each $i\!\in\![N]$  the tuple
$$\big((\rho_{I;j},\na^{(I;j)})_{j\in I-i},\Psi_{I;i}\big)_{i\in I\subset[N]}$$
is an $\om_i$-regularization for $\{X_{ij}\}_{j\in[N]-i}$ in $X_i$
in the sense of Definition~\ref{SCDregul_dfn}\ref{sympregul_it}.

\item\label{SCCreg2_it}  If $B$ is a smooth manifold, possibly with boundary, and 
$(\om_{t;i})_{t\in B,i\in[N]}$ is a smooth family of symplectic structures
on $\X$, then
an \sf{$(\om_{t;i})_{t\in B,i\in[N]}$-family of regularizations for~$\X$} is a family of tuples
\BE{SCCregul_e2}(\fR_t)_{t\in B} \equiv
(\cR_{t;I})_{t\in B,I\in\cP^*(N)} \equiv
\big(\rho_{t;I;i},\na^{(t;I;i)},\Psi_{t;I;i}\big)_{t\in B,i\in I\subset [N]}\EE
such that $(\cR_{t;I})_{I\in\cP^*(N)}$ is an $(\om_{t;i})_{i\in[N]}$-regularization for
$\X$ for each $t\!\in\!B$ and
for each $i\!\in\![N]$  the tuple
$$\big((\rho_{t;I;j},\na^{(t;I;j)})_{j\in I-i},\Psi_{t;I;i}\big)_{t\in B,i\in I\subset[N]}$$
is an $(\om_{t;i})_{t\in B}$-family of regularizations for 
$\{X_{ij}\}_{j\in[N]-i}$ in $X_i$
in the sense of Definition~\ref{SCDregul_dfn}\ref{sympregul_it2}.
\end{enumerate}
\end{dfn}

\noindent
The assumptions in Definition~\ref{SCCregul_dfn}\ref{SCCreg_it} imply
that the corresponding isomorphisms~\eref{fDPsiIIconf_e} are 
product Hermitian isomorphisms covering the maps~\eref{cNconfdfn_e}.\\

\noindent
The precise definition of the total space of the complex line bundle
$\cO_{X_{\prt}}(X_{\eset})$ in~\eref{PsiDfn_e} depends
on the choices of identifications~$\Psi_{ij;j}$ of neighborhoods of~$X_{ij}$
in~$X_j$ and in~$X_j$ and of the $\om_j$-tame complex structures~$\fI_{ij;j}$
on (the fibers of) $\cN_{X_j}X_{ij}$ 
that satisfy~\eref{restrisom_e3a} and~\eref{restrisom_e3b}, respectively. 
For a smooth family $(X_{\eset},\om_{t;i})_{t\in B,i\in[N]}$ of 
SC symplectic varieties as in Definition~\ref{SCC_dfn},
such choices can be made continuously with respect to $t\!\in\!B$.
We then obtain a complex line~bundle 
\BE{BprtO_e}\pi_{B;\prt}\!:\cO_{B;X_{\prt}}\big(X_{\eset}\big)\equiv
\bigcup_{t\in B}\{t\}\!\times\!\cO_{t;X_{\prt}}\big(X_{\eset}\big)\lra B\!\times\!X_{\prt},\EE
where $\cO_{t;X_{\prt}}(X_{\eset})\!\lra\!X_{\prt}$ is the line bundle corresponding to
the symplectic structure $(\om_{t;i})_{i\in[N]}$ on~$X_{\eset}$.\\

\noindent
If  $\pi\!: L\!\lra\!M$ is a complex line bundle, we call 
a smooth map $\Phi\!: L\!\lra\!\C$  a \sf{trivialization}
of~$L$ if~$\Phi$ restricts to an isomorphism on each fiber of~$L$.
We call a family $(\hb_t)_{t\in B}$ of homotopy classes of trivializations of 
$\cO_{t;X_{\prt}}(X_{\eset})$ \sf{continuous} if for each $t_0\!\in\!B$ 
there exist a neighborhood~$U$ of~$t_0$ in~$B$ and a trivialization~$\Phi$ of 
$\cO_{B;X_{\prt}}(X_{\eset})|_{\pi_{B;\prt}^{\,-1}(U)}$ such that
the restriction of~$\Phi$ to $\{t\}\!\times\!\cO_{t;X_{\prt}}(X_{\eset})$
lies in~$\hb_t$ for every $t\!\in\!U$.\\

\noindent
A~regularization~$\fR$ for~$\X$ as in Definition~\ref{SCCregul_dfn}\ref{SCCreg_it} specifies 
the identifications~$\Psi_{ij;j}$ and complex structures~$\fI_{ij;j}$ 
needed for the construction of
the complex line bundles in~\eref{cOrestr_e} and~\eref{cOXXidfn_e}. 
Given a regularization~$\fR$, we thus view the line bundles~$\cO_{X_{jk}}(X_{ijk})$,
$\cO_{X_{\prt}}(X_i)$, and $\cO_{X_{\prt}}(X_{\eset})$
as explicitly specified and denote them by  $\cO_{\fR;X_{jk}}(X_{ijk})$, 
$\cO_{\fR;X_{\prt}}(X_i)$, and $\cO_{\fR;X_{\prt}}(X_{\eset})$, respectively.
By~\eref{SCCcons_e}, 
\BE{cORXiden_e}
\cO_{\fR;X_{jk}}(X_{ijk})\big|_{X_{jj'k}}=\cO_{\fR;X_{jj'k}}(X_{ijj'k})\big|_{X_{jj'k}}
=\cO_{\fR;X_{j'k}}(X_{ij'k})\big|_{X_{jj'k}}\EE
for all $i,j,j',k\!\in\![N]$ with $i,k\neq j,j'$ and $i\!\neq\!k$.
An $(\om_{t;i})_{t\in B,i\in[N]}$-family $(\fR_t)_{t\in B}$ of regularizations 
for~$\X$  completely specifies the complex line bundle~\eref{BprtO_e}.\\

\noindent
Let $s_{jk;i}$ denote the standard section 
of the line bundle $\cO_{\fR;X_{jk}}(X_{ijk})$:
\BE{cOXDsec_e}s_{jk;i}(x)=\begin{cases}
(x,v,v)\in\Psi_{ijk;jk}^{-1\,*}\pi_{ijk;jk}^{\,*}\cN_{ijk;jk}\,,
&\hbox{if}~x\!=\!\Psi_{ijk;jk}(v);  \\
(x,1)\!\in\!(X_{jk}\!-\!X_{ijk})\!\times\!\C,&\hbox{if}~x\!\in\!X_{jk}\!-\!X_{ijk}\,.
\end{cases}\EE
This section is well-defined on the overlap by definition of
$\cO_{\fR;X_{jk}}(X_{ijk})$; see~\eref{cOXVdfn_e}.
By~\eref{SCCcons_e}, 
\BE{scompp_e} s_{jk;i}\big|_{X_{jj'k}}=s_{j'k;i}\big|_{X_{jj'k}}
\qquad\forall~i,j,j',k\in[N],~i,k\neq j,j',~i\neq k.\EE
If $I\!\subset\![N]$, $j,k\!\in\!I$ are distinct, and $i\!\not\in\!I$, let
$$\cO_{\fR;X_I}(X_i)=\cO_{\fR;X_{jk}}(X_{ijk})\big|_{X_I}\,, \qquad
s_{I;i}=s_{jk;i}\big|_{X_I}\,.$$
By~\eref{cORXiden_e} and~\eref{scompp_e}, 
$\cO_{\fR;X_I}(X_i)$ and $s_{I;i}$ are independent of the choice of $j,k\!\in\!I$.
By~\eref{cOXDsec_e}, $s_{I;i}$ does not vanish outside of $X_{I\cup i}\!\subset\!X_I$.\\

\noindent
For every $I\!\subset\![N]$ with $|I|\!\ge\!2$,  define a smooth bundle~map
\begin{gather*}
\Pi_{\fR;I}\!:\cN X_I\equiv\bigoplus_{i\in I}\cN_{X_{I-i}}X_I 
\lra \cO_{\fR;X_{\prt}}(X_{\eset})\big|_{X_I}
\equiv \bigotimes_{i\in I}\cN_{X_{I-i}}X_I~\otimes\bigotimes_{i\not\in I}\cO_{\fR;X_I}(X_i),\\
\Pi_{\fR;I}\big((v_{I;i})_{i\in I}\big)= 
\bigotimes_{i\in I}v_{I;i}\otimes\bigotimes_{i\not\in I}s_{I;i}(x)
\qquad\forall~(v_{I;i})_{i\in I}\in \cN X_I\big|_x,~x\!\in\!X_I.
\end{gather*}
This map is surjective over the complement $X_I^{\star}$ 
of the submanifolds $X_{I'}\!\subset\!X_I$ with $I'\!\supsetneq\!I$.

\begin{dfn}\label{Phicomp_dfn}
Let $\X$ be an SC symplectic configuration as in~\eref{SCCdfn_e} 
and $\fR$ be a regularization for~$\X$ as in~\eref{SCCregdfn_e0}.
A trivialization~$\Phi$ of the complex line bundle~$\cO_{\fR;X_{\prt}}(X_{\eset})$
over~$X_{\prt}$ is \sf{$\fR$-compatible} if 
\BE{Phicompdfn_e}\Phi\big(\Pi_{\fR;I'}\big(\fD\Psi_{I;I'}(v_{I;I'},v_{I;I-I'})\big)\big)
=\Phi\big(\Pi_{\fR;I}(v_{I;I'},v_{I;I-I'})\big)\EE
for all $(v_{I;I'},v_{I;I-I'})\in\pi_{I;I'}^*\cN_{I;I-I'}\big|_{\cN_{I;I'}'}$
and $I'\!\subset\!I\!\subset\![N]$ with $|I'|\!\ge\!2$.
\end{dfn}

\noindent
Let $\X\!\equiv\!\{X_I\}_{I\in\cP^*(N)}$ be an $N$-fold transverse configuration
such that $X_{ij}$ is a closed submanifold of~$X_i$ of codimension~2
for all $i,j\!\in\![N]$ distinct and
$(\om_{t;i})_{t\in B,i\in[N]}$ be a family of symplectic structures on~$\X$.
Suppose the tuples  
\begin{equation*}\begin{split}
\big(\fR_t^{(1)}\big)_{t\in B}&\equiv
\big(\rho_{t;I;i},\na^{(t;I;i)},\Psi_{t;I;i}^{(1)}\big)_{t\in B,i\in I\subset[N]},\\
\big(\fR_t^{(2)}\big)_{t\in B}&\equiv
\big(\rho_{t;I;i},\na^{(t;I;i)},\Psi_{t;I;i}^{(2)}\big)_{t\in B,i\in I\subset[N]}
\end{split}\end{equation*}
are $(\om_{t;i})_{t\in B,i\in[N]}$-families of regularizations for~$\X$.
We define
\BE{fRequiv_e0}\big(\fR_t^{(1)}\big)_{t\in B} \cong \big(\fR_t^{(2)}\big)_{t\in B}\EE
if the two families of regularizations agree on the level of germs, i.e.~there exists 
an $(\om_{t;i})_{t\in B,i\in[N]}$-family of regularizations
as in~\eref{SCCregul_e2} such that 
\BE{fRequiv_e}
\Dom(\Psi_{t;I;i})\subset\Dom(\Psi_{t;I;i}^{(1)}),\Dom(\Psi_{t;I;i}^{(2)})
\quad%\hbox{and}\\
\Psi_{t;I;i}=\Psi_{t;I;i}^{(1)}\big|_{\Dom(\Psi_{t;I;i})},
\Psi_{t;I;i}^{(2)}\big|_{\Dom(\Psi_{t;I;i})}\EE
for all $t\!\in\!B$ and $i\!\in\!I\!\subset\![N]$.\\

\noindent
A family~\eref{SCCregul_e2} satisfying~\eref{fRequiv_e} provides a canonical identification
of the line bundles~\eref{BprtO_e} 
determined by $\fR_t^{(1)}$ and $\fR_t^{(2)}$.
This identification is independent of the choice of a family~\eref{SCCregul_e2}
satisfying~\eref{fRequiv_e}.
Thus, the line bundles~\eref{BprtO_e} determined by 
$(\om_{t;i})_{t\in B,i\in[N]}$-families of regularizations for~$\X$
satisfying~\eref{fRequiv_e0} are canonically identified.

\begin{prp}\label{SCC_prp}
Let $N\!\in\!\Z^+$, $\X\!\equiv\!\{X_I\}_{I\in\cP^*(N)}$ be a transverse configuration
such that $X_{ij}$ is a closed submanifold of~$X_i$ of codimension~2
for all $i,j\!\in\![N]$ distinct,
and $X_i^*\!\subset\!X_i$ for each $i\!\in\![N]$ be an open subset, possibly empty,
such that $\ov{X_i^*}\!\cap\!X_I\!=\!\eset$ for all $i\!\in\!I\!\!\subset\![N]$ with $|I|\!=\!3$.
Suppose
\begin{enumerate}[label=$\bullet$,leftmargin=*]

\item $B$ is a compact  manifold, possibly with non-empty boundary~$\prt B$, 
such that the restriction homomorphism \hbox{$H^1(B;\Z)\!\lra\!H^1(\prt B;\Z)$}
is surjective,

\item $N(\prt B),N'(\prt B)$ are tubular neighborhoods of $\prt B\!\subset\!B$
such that $\ov{N'(\prt B)}\!\subset\!N(\prt B)$,

\item $(\om_{t;i})_{t\in B,i\in[N]}$ is a smooth family of 
elements of $\Symp^+(\X)$ such that the associated line bundle
$\cO_{B;X_{\prt}}(X_{\eset})$ is trivializable, 

\item $(\hb_t)_{t\in B}$ is a continuous family of homotopy classes
of trivializations of the line bundles
$\cO_{t;X_{\prt}}(X_{\eset})$ determined by $(\om_{t;i})_{i\in[N]}$
for all $t\!\in\!B$,

\item $(\fR_t)_{t\in N(\prt B)}$ is an 
$(\om_{t;i})_{t\in N(\prt B),i\in[N]}$-family of regularizations for~$\X$, and

\item $(\Phi_t)_{t\in N(\prt B)}$ is a smooth family of $\fR_t$-compatible trivializations 
of the complex line bundles $\cO_{\fR_t;X_{\prt}}(X_{\eset})$
in the homotopy class~$\hb_t$.

\end{enumerate}
Then there exist a smooth family $(\mu_{t,\tau;i})_{t\in B,\tau\in\bI,i\in[N]}$ of
1-forms on~$X_{\eset}$, an $(\om_{t,1;i})_{t\in B,i\in[N]}$-family $(\wt\fR_t)_{t\in B}$
of regularizations for~$\X$, and  
a smooth family $(\wt\Phi_t)_{t\in B}$ of $\wt\fR_t$-compatible trivializations
of $\cO_{\fR_t;X_{\prt}}(X_{\eset})$ in the homotopy class~$\hb_t$
such~that 
\begin{gather*}
\big(\om_{t,\tau;i}\!\equiv\!\om_{t;i}\!+\!\nd\mu_{t,\tau;i}\big)_{i\in[N]}
\in \Symp^+(\X),\quad
\mu_{t,0;i}=0, \quad 
\supp\big(\mu_{\cdot,\tau;i}\big)\subset 
\big(B\!-\!N'(\prt B)\big)\!\times\!(X_i\!-\!X_i^*)
\end{gather*}
for all $t\!\in\!B$, $\tau\!\in\!\bI$, and $i\!\in\![N]$, and
\BE{SCCprp_e1}\big(\wt\fR_t\big)_{t\in N'(\prt B)} \cong \big(\fR_t\big)_{t\in N'(\prt B)}
\,,\quad
\big(\wt\Phi_t\big)_{t\in N'(\prt B)} = \big(\Phi_t\big)_{t\in N'(\prt B)}.\EE
\end{prp}

\subsection{Existence of compatible isomorphisms}
\label{SCCprpPf_subs}

\noindent
We prove Proposition~\ref{SCC_prp} below by making trivializations of 
$\cO_{\fR_t;X_{\prt}}(X_{\eset})$ $\fR_t$-compatible over 
neighborhoods of the strata of~$X_{\eset}$.
This argument in a sense adapts the setup of 
the proof of \cite[Theorem~2.17]{SympDivConf} to deal with bundle trivializations.
The key inductive step in this case is carried out by Lemma~\ref{SCC_lmm}.\\

\noindent
Let $\X$ be an SC symplectic configuration as in~\eref{SCCdfn_e},
$\fR$ be a regularization for~$\X$ as in~\eref{SCCregdfn_e0}, and
$W\!\subset\!X_{\prt}$.
We call a \sf{trivialization}~$\Phi$ of $\cO_{\fR;X_{\prt}}(X_{\eset})|_W$
\sf{$\fR$-compatible} if~\eref{Phicompdfn_e} is satisfied whenever 
$v_{I;I'}\!\in\!\Psi_{I;I'}^{\,-1}(W)|_{X_I\cap W}$.

\begin{proof}[{\bf{\emph{Proof of Proposition~\ref{SCC_prp}}}}]
With all references to the line bundle $\cO_{X_{\prt}}(X_{\eset})$ dropped,
Proposition~\ref{SCC_prp} is a special case of \cite[Theorem~2.17]{SympDivConf}.
Thus, we can assume that 
$(\fR_t)_{t\in B}$ is an $(\om_{t;i})_{t\in B,i\in[N]}$-family of regularizations
for~$\X$ as in~\eref{SCCregul_e2} and that $\cO_{B;X_{\prt}}(X_{\eset})$
is the line bundle as in~\eref{BprtO_e} constructed using this family of regularizations.
By Lemma~\ref{SCC_lmm0}, we can assume that this line bundle admits a trivialization
$$\Phi_B\!\equiv\!(\Phi_t)_{t\in B}\!: \cO_{B;X_{\prt}}(X_{\eset})\lra\C$$
so that  $\Phi_t$ lies in~$\hb_t$ for every $t\!\in\!B$.
In particular, this trivialization is $\fR_t$-compatible for every $t\!\in\!N(\prt B)$.
The above applications of \cite[Theorem~2.17]{SympDivConf} and Lemma~\ref{SCC_lmm0}
 require 
shrinking $N(\prt B)$ slightly;
so~$N(\prt B)$ in the remainder of this proof corresponds to
$N'(\prt B)$ in the statement of the proposition.\\

\noindent
Below we  deform~$\Phi_B$ to make its restriction to $\cO_{t;X_{\prt}}(X_{\eset})$
compatible with a shrinking of $\fR_t$ for all $t\!\in\!B$.
Fix a total order~$>$ on subsets $I\!\subset\![N]$ with $|I|\!\ge\!2$ 
so that  $I\!>\!I^*$ whenever $I\!\supsetneq\!I^*$.
We will proceed inductively on the strata $X_{I^*}$ of~$X_{\prt}$
using the total order~$>$.\\

\noindent
Suppose $I^*\!\subset\![N]$ with $|I^*|\!\ge\!2$,
$W^>$ is a neighborhood of
$$X_{I^*}^>\equiv \bigcup_{I>I^*}\!\! X_I \subset X_{\prt}$$ 
in $X_{\prt}$, and  $(\Phi^>_t)_{t\in B}$
is a smooth family of $\fR_t$-compatible trivializations of
$\cO_{\fR_t;X_{\prt}}(X_{\eset})|_{W^>}$  such~that 
\BE{SCCprp_e3}
\big(\Phi_t^>\big)_{t\in N(\prt B)}=\big(\Phi_t|_{W^>}\big)_{t\in N(\prt B)}, \quad
%\label{sufficientlyclose_eqn}
\big|\Phi_t(x) - \Phi_t^>(x)\big|_t <  \big|\Phi_t(x)\big|_t
~~\forall\,x\!\in\!W^>,\, t\!\in\!B.\EE
Let $W'$ be a neighborhood of~$X_{I^*}^>\!\subset\!X_{\prt}$ such that $\ov{W'}\!\subset\!W$. 
We apply Lemma~\ref{SCC_lmm} below with $W\!=\!W^>$ and $\Phi_t'\!=\!\Phi_t^>$.
There thus exist a neighborhood~$W_{I^*}$ of $X_{I^*}\!\subset\!X_{\prt}$,
an $(\om_{t;i})_{t\in B,i\in[N]}$-family $(\wt\fR_t)_{t\in B}$
of regularizations for~$\X$, and  
a smooth family $(\wt\Phi_t')_{t\in B}$ of $\wt\fR_t$-compatible
trivializations of $\cO_{\fR_t;X_{\prt}}(X_{\eset})|_{W'\cup W_{I^*}}$
satisfying the first condition in~\eref{SCCprp_e1}
and the two conditions in~\eref{SCCprp_e3} with $\Phi_t^>$ replaced~by
$\Phi_t^{\ge}\!\equiv\!\wt\Phi_t'$ and $W^>$ by $W^{\ge}\!\equiv\!W'\!\cup\!W_{I^*}$.\\

\noindent
By the downward induction on~$\cP^*(N)$ with respect to~$>$, 
we thus obtain an $(\om_{t;i})_{t\in B,i\in[N]}$-family $(\wt\fR_t)_{t\in B}$ 
of regularizations for~$\X$ and 
a smooth family $(\wt\Phi_t)_{t\in B}$ of $\wt\fR_t$-compatible trivializations
of $\cO_{\fR_t;X_{\prt}}(X_{\eset})$ satisfying~\eref{SCCprp_e1} such~that 
$$\big|\Phi_t(x)-\wt\Phi_t(x)\big|_t< \big|\Phi_t(x)\big|_t 
~~\forall\,x\!\in\!X_{\prt},\,t\!\in\!B.$$
This implies that 
$$\Phi_{t;\tau}\!\equiv\!(1\!-\!\tau)\Phi_t\!+\!\tau\wt\Phi_t\!:\,
  \cO_{\wt\fR_t;X_{\prt}}(X_{\eset})\lra X_{\eset}\!\times\!\C,
\qquad \tau\!\in\!\bI,$$
is a homotopy from $\Phi_t$ to $\wt\Phi_t$ through trivializations of 
$\cO_{\wt\fR_t;X_{\prt}}(X_{\eset})$ for every $t\!\in\!B$.
Thus, the trivialization~$\wt\Phi_t$ lies in the homotopy class~$\hb_t$
for every $t\!\in\!B$.
\end{proof}

\begin{lmm}\label{SCC_lmm0}
Let $N'(\prt B)\!\subset\!N(\prt B)\!\subset\!B$ be as in Proposition~\ref{SCC_prp},
$X$ be a CW complex, and $\cL\!\lra\!B\!\times\!X$ be a trivializable complex line bundle.
Suppose 
\begin{enumerate}[label=$\bullet$,leftmargin=*]

\item $(\hb_t)_{t\in B}$ is a continuous family of homotopy classes
of trivializations of the complex line bundles $\cL_t\!\equiv\!\cL|_{\{t\}\times X}$,

\item $\Phi_{N(\prt B)}$ is a trivialization of $\cL|_{N(\prt B)\times X}$
such that $\Phi_{N(\prt B)}|_{\cL_t}$ 
lies in~$\hb_t$ for every $t\!\in\!N(\prt B)$.

\end{enumerate}
Then there exists a trivialization~$\Phi_B$ of~$\cL$ so that $\Phi_B|_{\cL_t}$ 
lies in~$\hb_t$ for every $t\!\in\!B$ and
\BE{SCClmm0_e0}\Phi_B|_{N'(\prt B)}=\Phi_{N(\prt B)}|_{N'(\prt B)}\,.\EE
\end{lmm}

\begin{proof} Let $\eta_{S^1}\!\in\!H^1(S^1;\Z)$ be a generator.
For any topological space~$Y$, denote by $[Y,S^1]$  the set of homotopy classes
of continuous maps $Y\!\lra\!S^1$.
If $Y$ is a CW complex, then the~map
\BE{YS1H1_e}\big[Y,S^1\big]\lra H^1(Y;\Z), \qquad
\big[f\!:Y\!\lra\!S^1\big]\lra f^*\eta_{S^1},\EE
is a bijection.\\

\noindent
We can assume that $X$ is connected.
Let $\pi_B\!:B\!\times\!X\!\lra\!B$ be the projection and
$$\Phi_B'\!:\cL\lra \C$$
be a trivialization of~$\cL$.
For each $t\!\in\!B$, denote by $\hb_t'$ the homotopy class of maps 
\hbox{$f\!:X\!\lra\!S^1$} such~that the trivialization
$$\Phi_f'\!:\cL_t\lra\C, \qquad
\Phi_f'(v)\!=\!f(\pi_{\cL}(v))\Phi_B'(v),$$
lies in~$\hb_t$.\\

\noindent
Since $(\hb_t)_{t\in B}$ is a continuous family of homotopy classes,
for each $t_0\!\in\!B$ there exist a contractible neighborhood~$U$ of~$t_0$ in~$B$ and 
a continuous function $F_U\!:U\!\times\!X\!\lra\!S^1$ such that
$F_U|_{\{t\}\times X}$ lies in~$\hb_t'$ for every $t\!\in\!U$.
The class
$$\eta_U\equiv F_U^*\eta_{S^1}\in H^1\big(U\!\times\!X;\Z\big)$$
is then independent of the choice of~$U$.
These classes agree on the overlaps and thus determine an element 
$\eta_B\!\in\!H^1(B\!\times\!X;\Z)$.
Since the map~\eref{YS1H1_e} is surjective, there exists a continuous map
\hbox{$F\!:B\!\times\!X\!\lra\!S^1$}  such that $\eta_B\!=\!F^*\eta_{S^1}$.
Define
$$\Phi_F\!:\cL\lra \C, \qquad \Phi_F'(v)=F(\pi_{\cL}(v))\Phi_B'(v)\,.$$
For each $t\!\in\!B$, the trivialization $\Phi_F|_{\cL_t}$  lies in~$\hb_t$.\\

\noindent 
Let $F_{N(\prt B)}\!:N(\prt B)\!\times\!X\!\lra\!\C^*$ be the continuous function so that
\BE{SCClmm0_e7}
\Phi_{N(\prt B)}(v)=F_{N(\prt B)}(\pi_{\cL}(v))\Phi_{F;2}'(v)
\qquad\forall\,v\!\in\!\cL|_{N(\prt B)\times X}\,.\EE
For each $t\!\in\!\prt B$,
the restrictions of $\Phi_{N(\prt B)}$ and $\Phi_F'$ to $\cL_t$ are homotopic.
Thus, the restriction of $F_{N(\prt B)}$ to $\{t\}\!\times\!X$ with $t\!\in\!\prt B$
is null-homotopic.
This implies that 
$$F_{N(\prt B)}^*\eta_{S^1}\big|_{\prt B\times X}\in
H^1\big(\prt B;\Z\big)\!\otimes\!H^0(X;\Z\big)
\subset  H^1\big(\prt B\!\times\!X;\Z\big).$$
Since the restriction homomorphism \hbox{$H^1(B;\Z)\!\lra\!H^1(\prt B;\Z)$}
is surjective and the map~\eref{YS1H1_e} is bijective, there thus exists a continuous map 
$f\!:B\!\lra\!S^1$ such~that 
$$F_{N(\prt B)}^*\eta_{S^1}\big|_{\prt B\times X}=
\big(f^*\eta_{S^1}\big|_{\prt B}\big)\!\otimes\!1,\quad
\big[f\!\circ\!\pi_B|_{\prt B\times X}\big]\!=\!\big[F_{N(\prt B)}|_{\prt B\times X}\big]
\in\big[\prt B\!\times\!X,S^1\big].$$
 
\vspace{.2in}

\noindent
Since $N(\prt B)$  is a tubular neighborhood of $\prt B$,
the last equality above implies that there exists a continuous~function
\BE{SCClmm0_e11}G\!:B\!\times\!X\lra S^1 \qquad\hbox{s.t.}\quad
G(t,x)=\begin{cases}F_{N(\prt B)}(t,x),&\hbox{if}~t\!\in\!N'(\prt B);\\
f(t),&\hbox{if}~t\!\not\in\!N(\prt B).
\end{cases}\EE
Define 
$$\Phi_B\!:\cL\lra \C, \qquad
\Phi_B(v)=G(\pi_{\cL}(v))\Phi_{F;2}'(v)\,.$$
By the first case in~\eref{SCClmm0_e11} and~\eref{SCClmm0_e7}, this trivialization
of~$\cL$ satisfies~\eref{SCClmm0_e0}.
Since $N(\prt B)$  is a tubular neighborhood of~$\prt B$,
this implies that the restrictions of~$\Phi_B$ and $\Phi_{N(\prt B)}$
to~$\cL_t$ are homotopic for every $t\!\in\!N(\prt B)$.
By the second case in~\eref{SCClmm0_e11}, 
the restrictions of~$\Phi_B$ and $\Phi_F'$ 
to~$\cL_t$ are homotopic for every $t\!\not\in\!N(\prt B)$ as trivializations of~$\cL_t$.
Thus,  $\Phi_B|_{\cL_t}$ lies in~$\hb_t$ for every $t\!\in\!B$.
\end{proof}

\begin{lmm}\label{SCC_lmm}
Let $\X$,  $B$,  and $(\om_{t;i})_{t\in B,i\in[N]}$  be as in Proposition~\ref{SCC_prp}
and $N(\prt B)$ be a neighborhood of $\prt B\!\subset\!B$.
Suppose 
\begin{enumerate}[label=$\bullet$,leftmargin=*]

\item $I^*\!\in\!\cP(N)$ and $W,W'\!\subset\!X_{\prt}$ are open subsets such~that 
\BE{SCCweak_e0}
|I^*|\ge2, \quad \ov{W'}\subset W,\quad
X_I\subset W'~~\forall\,I\!\in\!\cP(N),\,I\!\supsetneq\!I^*\,,\EE

\item $(\fR_t)_{t\in B}$ is an $(\om_{t;i})_{t\in B,i\in[N]}$-family of 
regularizations for~$\X$,

\item $(\Phi_t)_{t\in B}$ is a smooth family of trivializations 
of $\cO_{\fR_t;X_{\prt}}(X_{\eset})$ over~$X_{\prt}$
which are $\fR_t$-compatible for $t\!\in\!N(\prt B)$,

\item $(\Phi_t')_{t\in B}$ is a smooth family of $\fR_t$-compatible trivializations 
of $\cO_{\fR_t;X_{\prt}}(X_{\eset})|_W$ such~that
\BE{SCCweak_e1}
\big(\Phi_t|_W\big)_{t\in N(\prt B)}=\big(\Phi_t'\big)_{t\in N(\prt B)}, \quad
%\label{sufficientlyclose_eqn}
\big|\Phi_t(x) - \Phi_t'(x)\big|_t <  \big|\Phi_t(x)\big|_t
~~\forall\,x\!\in\!W,\, t\!\in\!B.\EE

\end{enumerate}
Then there exist a neighborhood~$W_{I^*}$ of $X_{I^*}\!\subset\!X_{\prt}$,
an $(\om_{t;i})_{t\in B,i\in[N]}$-family $(\wt\fR_t)_{t\in B}$
of regularizations for~$\X$, and  
a smooth family $(\wt\Phi_t')_{t\in B}$ of $\wt\fR_t$-compatible
trivializations of $\cO_{\fR_t;X_{\prt}}(X_{\eset})|_{W'\cup W_{I^*}}$
such~that 
\begin{gather}\label{SCClmm_e2a}
\big(\wt\fR_t\big)_{t\in B} \cong \big(\fR_t\big)_{t\in B},\quad
\big(\wt\Phi_t'|_{W'}\big)_{t\in B}=\big(\Phi_t'|_{W'}\big)_{t\in B}\,, \\
\label{SCClmm_e2b}
\big(\Phi_t|_{W'\cup W_{I^*}}\big)_{t\in N(\prt B)}=\big(\wt\Phi_t'\big)_{t\in N(\prt B)}, \quad
%\label{sufficientlyclose_eqn}
\big|\Phi_t(x) - \wt\Phi_t'(x)\big| <  \big|\Phi_t(x)\big|
~~\forall\,x\!\in\!W'\!\cup\!W_{I^*},\, t\!\in\!B,
\end{gather}
\end{lmm}

\begin{proof} 
Let $(\fR_t)_{t\in B}$ be as in~\eref{SCCregul_e2}.
For each $x\!\in\!X_{\prt}$ and $x\!\in\!W$, let
\begin{equation*}\begin{split}
\Phi_t(x)&=\!\Phi_t|_{\cO_{\fR_t;X_{\prt}}(X_{\eset})|_x}\!:
 \cO_{\fR_t;X_{\prt}}(X_{\eset})|_x \lra \C \qquad\hbox{and}\\
\Phi_t'(x)&=\!\Phi_t'|_{\cO_{\fR_t;X_{\prt}}(X_{\eset})|_x}\!:
 \cO_{\fR_t;X_{\prt}}(X_{\eset})|_x \lra \C\,,
\end{split}\end{equation*}
respectively.\\

\noindent
Choose open subsets $W''\!\subset\!W'''\!\subset\!X_{\prt}$ such that 
\BE{SCClmm_e4a1}
\ov{W'}\subset W'', \quad  \ov{W''}\subset W''', \quad  \ov{W'''}\subset W.\EE
By~\eref{SCCweak_e0} and \cite[Lemma~5.8]{SympDivConf}, 
we can shrink the domains of the maps~$\Psi_{t;I;i}$ so~that
\BE{SCClmm_e3a}
\Psi_{t;I^*;\eset}^{-1}\big(W')\subset\cN X_{I^*}|_{X_{I^*}\!\cap\!W''},\quad
\Psi_{t;I;\eset}\big(\Dom\big(\Psi_{t;I;\eset}|_{X_I\cap W'})\big)\subset W''
~~\forall\,I\!\in\!\cP^*(N),\,|I|\!\ge\!2.\EE
Let $\rho\!: X_{I^*}\!\lra\![0,1]$ a smooth function such~that
\BE{SCClmm_e4a}
\rho(x)=\begin{cases}1,&\hbox{if}~x\!\in\!X_{I^*}\!\cap\!W''';\\
0,&\hbox{if}~x\!\not\in\!X_{I^*}\!\cap\!W.
\end{cases}\EE

\vspace{.1in}

\noindent
Define 
\begin{gather}
\notag
\wt\Phi_{t;I^*}\!:\, \cO_{\fR_t;X_{\prt}}(X_{\eset})|_{X_{I^*}} \lra \C,\\
\label{SCClmm_e4c}
\wt\Phi_{t;I^*}(x)  \equiv
\wt\Phi_{t;I^*}|_{\cO_{\fR_t;X_{\prt}}(X_{\eset})|_x} 
\equiv \rho(x)\Phi'_t(x) 
+ \big(1\!-\!\rho(x)\big) \Phi_t(x)
\quad \forall~x\!\in\!X_{I^*},\,t\!\in\!B.
\end{gather}
By~\eref{SCClmm_e4c} and~\eref{SCClmm_e4a}, 
\BE{SCClmm_e5a} 
\big(\wt\Phi_{t;I^*}|_{X_{I^*}\cap W'''}\big)_{t\in B}=
\big(\Phi_t'|_{X_{I^*}\cap W'''}\big)_{t\in B}\,.\EE
By~\eref{SCClmm_e4c} and~\eref{SCCweak_e1}, 
\BE{SCClmm_e7} 
\big(\Phi_t|_{X_{I^*}}\big)_{t\in N(\prt B)}=\big(\wt\Phi_{t;I^*}\big)_{t\in N(\prt B)}, \quad
\big|\Phi_t(x) - \wt\Phi_{t;I^*}(x)\big| <  \big|\Phi_t(x)\big|
~~\forall\,x\!\in\!X_{I^*},\, t\!\in\!B.\EE
In particular, $\wt\Phi_{t;I^*}(x)$ is a complex linear isomorphism for
all $x\!\in\!X_{I^*}$ and $t\!\in\!B$.\\

\noindent
Since $B$ is compact, there exists a neighborhood $W_{I^*}'$
of $X_{I^*}\!\subset\!X_{\eset}$ such~that
\BE{SCClmm_e9a}B\!\times\!W_{I^*}'\subset \bigcup_{t\in B}
\{t\}\!\times\!\Im\big(\Psi_{t;I^*;\eset}\big) \,.\EE
Since $\ov{W''}\!\subset\!W'''$ and $X_I\!\subset\!W'$ for all $I\!\in\!\cP(N)$
with $I\!\supsetneq\!I^*$, we can shrink~$W_{I^*}'$ so~that
\BE{SCClmm_e9b}
W''\!\cap\!W_{I^*}'\subset 
\Psi_{t;I^*;\eset}\big(\Dom(\Psi_{t;I^*;\eset})|_{X_{I^*}\cap W'''}\big)~~\forall\,t\!\in\!B,
\quad X_I\!\cap\!W_{I^*}'\subset W''~~I\!\in\!\cP(N),\,I\!\supsetneq\!I^*.\EE
Let $W_{I^*}^{\star}\!\subset\!W_{I^*}'$ be the complement of 
the subspaces $X_I\!\subset\!X_{\eset}$ with $I\!\subset\![N]$ such that $I\!\not\subset\!I^*$.\\

\noindent
Fix $t\!\in\!B$ and $x\!\in\!W_{I^*}^{\star}$. 
Let $I_x\!\subset\!I^*$ be the largest subset such that $x\!\in\!X_{I_x}$.
By~\eref{SCClmm_e9a}, 
$$x=\Psi_{t;I^*;I_x}(v_{I^*;I_x})$$
for a unique $v_{I^*;I_x}\!\in\!\cN_{I^*;I_x}\!\subset\cN X_{I^*}$.
Since $W_{I^*}'\!\subset\!X_{\prt}$, $|I_x|\!\ge\!2$.
Let $x_0\!=\!\pi_{I^*}(v_{I^*;I_x})$.
By the maximality of~$I_x$, 
the bundle map~$\Pi_{\fR_t;I_x}$ is surjective over~$x$ for every $t\!\in\!B$.
Given \hbox{$v\!\in\!\cO_{\fR_t;X_{\prt}}(X_{\eset})|_x$},
choose
$$v_{I^*;I^*-I_x}\in \cN_{I^*;I^*-I_x}\big|_{x_0} \quad\hbox{s.t.}~~
v= \Pi_{\fR_t;I_x}\big(\fD\Psi_{t;I^*;I_x}(v_{I^*;I_x},v_{I^*;I^*-I_x})\big).$$
Since $\fD\Psi_{t;I^*;I_x}$ is an isomorphism of split vector bundles,
$$\Pi_{\fR_t;I^*}(v_{I^*;I_x},v_{I^*;I^*-I_x})\in \cO_{\fR_t;X_{\prt}}(X_{\eset})\big|_{x_0}$$
is determined by $v$.
Thus, the~map 
\BE{SCClmm_e11}
\wt\Phi_t(x)\!: \cO_{\fR_t;X_{\prt}}(X_{\eset})\big|_x \lra \C, \quad
\big\{\wt\Phi_t(x)\big\}(v) \equiv
\wt\Phi_{t;I^*}\big(\Pi_{\fR_t;I^*}(v_{I^*;I_x},v_{I^*;I^*-I_x})\big),\EE
is well-defined.
Since $x_0\!\in\!X_{I^*}^{\star}$ 
and all components of~$v_{I^*;I_x}$ are nonzero, this map is an isomorphism.
Since $\wt\Phi_t|_{X_{I^*}^{\star}}\!=\!\wt\Phi_{t;I^*}|_{X_{I^*}^{\star}}$, 
\eref{SCClmm_e5a} and~\eref{SCClmm_e7} imply~that 
\BE{SCClmm_e12a} 
\big(\wt\Phi_t|_{X_{I^*}^{\star}\cap W'''}\big)_{t\in B}=
\big(\Phi_t'|_{X_{I^*}^{\star}\cap W'''}\big)_{t\in B}, \quad
\big(\wt\Phi_t|_{X_{I^*}^{\star}}\big)_{t\in N(\prt B)}=
\big(\Phi_t|_{X_{I^*}^{\star}}\big)_{t\in N(\prt B)}.\EE

\vspace{.1in}

\noindent
With $x$ as above, suppose $I_x\!\subset\!I'\!\subset\!I^*$ and
$x\!=\!\Psi_{t;I';I_x}(v_{I';I_x})$ for some 
$$v_{I';I_x}\in \cN_{I';I_x}\big|_{x'},\quad
x'=\Psi_{t;I^*;I'}(v_{I^*;I'})\in W_{I^*}^{\star},\quad
v_{I^*;I'}\in\cN_{I^*;I'}\subset\cN X_{I^*}\,.$$
By~\eref{overlap_e} with $I\!=\!I^*$ and the injectivity of~$\Psi_{t;I^*;I'}$,
$v_{I';I_x}=\fD\Psi_{t;I^*;I'}(v_{I^*;I_x})$.
By~\eref{SCClmm_e11} and the second statement in~\eref{SCCcons_e2}
with $I''\!\subset\!I'\!\subset\!I$ replaced by $I_x\!\subset\!I'\!\subset\!I^*$,
\begin{equation*}\begin{split}
&\big\{\wt\Phi_t(x)\big\}
\big(\Pi_{\fR_t;I_x}\big(\fD\Psi_{t;I';I_x}(\fD\Psi_{t;I^*;I'}(v_{I^*;I_x},v_{I^*;I^*-I_x}))\big)\big)
=\big\{\wt\Phi_{t;I^*}(x_0)\big\}\big(\Pi_{\fR_t;I^*}(v_{I^*;I_x},v_{I^*;I^*-I_x})\big)\\
&\hspace{2in}=\big\{\wt\Phi_t(x')\big\}
\big(\Pi_{\fR_t;I'}(\fD\Psi_{t;I^*;I'}(v_{I^*;I_x},v_{I^*;I^*-I_x}))\big).
\end{split}\end{equation*} 
Thus,
\begin{gather}\label{SCClmm_e15}
\begin{split}
&\big\{\wt\Phi_t(\Psi_{I;I'}(v_{I;I'})\big)\big\}\big(\Pi_{\fR_t;I'}
\big(\fD\Psi_{t;I;I'}(v_{I;I'},v_{I;I-I'})\big)\big)\\
&\hspace{2in}=\big\{\wt\Phi_t\big(\pi_I(v_{I;I'})\big)\big\}
\big(\Pi_{\fR_t;I}\big(v_{I;I'},v_{I;I-I'})\big)
\end{split}\\
\notag
\forall\,I'\!\subset\!I\!\subset\!I^*,\,|I'|\!\ge\!2, \quad
(v_{I;I'},v_{I;I-I'})\in \pi_{I;I'}^*\cN_{I;I-I'}
|_{\Psi_{t;I;I'}^{-1}(W_{I^*}^{\star})|_{X_I\cap W_{I^*}^{\star}}}.
\end{gather}
We conclude that $\wt\Phi_{t;2}$ satisfies~\eref{Phicompdfn_e} 
over~$W_{I^*}^{\star}$  whenever 
\hbox{$I'\!\subset\!I\!\subset\!I^*$}.\\

\noindent
By the $\fR_t$-compatibility of~$\Phi_t$ for all $t\!\in\!N(\prt B)$ and
the $\fR_t$-compatibility of~$\Phi_t'$ for all $t\!\in\!B$,
\begin{alignat*}{2}
\label{SCClmm_e16a}
\Phi_t\big(\Pi_{\fR;I}\big(\fD\Psi_{I^*;I}(v)\big)\big)
&=\Phi_t\big(\Pi_{\fR;I^*}(v)\big) &\quad& 
\forall\,
v\!\in\!\pi_{I^*;I}^*\cN_{I^*;I^*-I}\big|_{\Dom(\Psi_{t;I^*;I})},\,t\!\in\!N(\prt B),\\
\Phi_t'\big(\Pi_{\fR;I}\big(\fD\Psi_{I^*;I}(v)\big)\big)
&=\Phi_t'\big(\Pi_{\fR;I^*}(v)\big)
&\quad& \forall\,
v\!\in\!\pi_{I^*;I}^*\cN_{I^*;I^*-I}\big|_{\Psi_{t;I^*;I}^{\,-1}(W)|_{X_{I^*}\cap W}},
\,t\!\in\!B,
\end{alignat*}
whenever $I\!\subset\!I^*$ and $|I|\!\ge\!2$. 
Along with~\eref{SCClmm_e11} and~\eref{SCClmm_e12a}, these two statements imply~that 
\begin{alignat}{1}
\big(\wt\Phi_t
|_{\Psi_{t;I^*;\eset}(\Dom(\Psi_{t;I^*;\eset})|_{X_{I^*}\cap W'''})\cap W_{I^*}^{\star}}\big)_{t\in B}
&=\big(\Phi_t'|_{
\Psi_{t;I^*;\eset}(\Dom(\Psi_{t;I^*;\eset})|_{X_{I^*}\cap W'''})\cap W_{I^*}^{\star}}\big)_{t\in B}, \\
\notag
\big(\wt\Phi_t|_{\Im(\Psi_{t;I^*;\eset})\cap W_{I^*}^{\star}}\big)_{t\in N(\prt B)}
&=\big(\Phi_t|_{\Im(\Psi_{t;I^*;\eset})\cap W_{I^*}^{\star}}\big)_{t\in N(\prt B)}.
\end{alignat}
Combining these identities with the first assumption in~\eref{SCClmm_e9b} and
with~\eref{SCClmm_e9a}, 
we obtain
\BE{SCClmm_e19}
\big(\wt\Phi_t|_{W''\cap W_{I^*}^{\star}}\big)_{t\in B}
=\big(\Phi_t'|_{W''\cap W_{I^*}^{\star}}\big)_{t\in B}, \quad
\big(\wt\Phi_t|_{W_{I^*}^{\star}}\big)_{t\in N(\prt B)}
=\big(\Phi_t|_{W_{I^*}^{\star}}\big)_{t\in N(\prt B)}.\EE

\vspace{.1in}

\noindent
By the first identity in~\eref{SCClmm_e19}, the isomorphism
\BE{SCClmm_e25}\wt\Phi'_t(x)\!:\, \cO_{\fR_t;X_{\prt}}(X_{\eset})|_x\lra\C,
\qquad
\wt\Phi'_t(x)=\begin{cases}
\Phi'_t(x),&\hbox{if}~x\!\in\!W'';\\
\wt\Phi_t(x),&\hbox{if}~x\!\in\!W_{I^*}^{\star};
\end{cases}\EE
is well-defined for every $t\!\in\!B$.
By the second assumption in~\eref{SCClmm_e9b},  $W''\!\cup\!W_{I^*}^{\star}\!=\!W''\!\cup\!W_{I^*}'$.
Let
\BE{SCClmm_e27}\wt\Phi_t'\!:  
\cO_{\fR_t;X_{\prt}}(X_{\eset})|_{W''\cup W_{I^*}'}\lra \C, \quad
\wt\Phi_t'(v)=\big\{\wt\Phi_t'(\pi(v))\big\}(v)\,.\EE
By the first case in~\eref{SCClmm_e25}, this trivialization satisfies
the second condition in~\eref{SCClmm_e2a}.
By~\eref{SCClmm_e25}, the first assumption in~\eref{SCCweak_e1}, and
the second identity in~\eref{SCClmm_e19}, it also satisfies 
the first condition in~\eref{SCClmm_e2b}.\\

\noindent
We next verify that the restriction of~\eref{SCClmm_e27}
to $W'\!\cup\!W_{I^*}'$ is $\fR_t$-compatible.
Suppose
\begin{gather*}
t\!\in\!B,\quad I'\subset I\subset[N],\quad |I'|\ge2, \quad 
x\in \big(W'\!\cup\!W_{I^*}'\big)\!\cap\!X_I, \\ 
\big(v_{I;I'},v_{I;I-I'}\big)\in\pi_{I;I'}^*\cN_{I;I-I'}\big|_x,\quad
x'\!\equiv\!\Psi_{t;I;I'}(v_{I;I'})\in W'\!\cup\!W_{I^*}'\,.
\end{gather*}
If $x,x'\!\in\!W''$ or $x,x'\!\in\!W_{I^*}^{\star}$, then 
\BE{SCClmm_e28}\wt\Phi_{t;2}'\big(\Pi_{\fR_t;I'}
\big(\fD\Psi_{t;I;I'}(v_{I;I'},v_{I;I-I'})\big)\big)
=\wt\Phi_{t;2}'\big(\Pi_{\fR_t;I}(v_{I;I'},v_{I;I-I'})\big)\EE
by the $\fR_t$-compatibility of~$\Phi'_t$ in the first case and 
by~\eref{SCClmm_e15} in the second case.
If $x\!\in\!W_{I^*}^{\star}$ and $x'\!\in\!W'$, then \eref{overlap_e} 
and the first assumption in~\eref{SCClmm_e3a} imply~that 
$$x'\in W'\!\cap\!\Im\big(\Psi_{t;I^*;\eset}\big)\subset 
\Psi_{t;I^*;\eset}\big(\Dom\big(\Psi_{t;I;\eset}\big)\big|_{X_{I^*}\cap W''}\big),
\quad x\in \Psi_{t;I^*;\eset}
\big(\Dom\big(\Psi_{t;I;\eset}\big)\big|_{X_{I^*}^{\star}\cap W''}\big)\!\cap\!W_{I^*}^{\star}.$$
The identity~\eref{SCClmm_e28} in this case follows from~\eref{SCClmm_e16a}
and the $\fR_t$-compatibility of~$\Phi'_t$. 
If $x\!\in\!W'$ and $x'\!\in\!W_{I^*}^{\star}$, then 
the second assumption in~\eref{SCClmm_e3a} implies~that $x'\!\in\!W''$. 
The identity~\eref{SCClmm_e28} in this case follows from the $\fR_t$-compatibility of~$\Phi'_t$.\\

\noindent
Along with $\wt\Phi_{t;2}|_{X_{I^*}^{\star}}\!=\!\wt\Phi_{t;I^*}|_{X_{I^*}^{\star}}$, 
\eref{SCClmm_e25} and \eref{SCClmm_e5a} imply that 
$\wt\Phi_{t;2}'|_{X_{I^*}}\!=\!\wt\Phi_{t;I^*}$.
By the second statement in~\eref{SCClmm_e7} and the compactness of~$B$,
there thus exists a neighborhood~$W_{I^*}$ of $X_{I^*}\!\subset\!W_{I^*}'$ such~that
$$\big|\Phi_{t;2}(x)-\wt\Phi_{t;2}'(x)\big| <  \big|\Phi_{t;2}(x)\big|
~~\forall\,x\!\in\!W_{I^*},\, t\!\in\!B.$$
Combining this with the first case in~\eref{SCClmm_e25} and 
the second assumption in~\eref{SCCweak_e1}, 
we conclude that the isomorphism~$\wt\Phi_t'$ satisfies 
the second condition in~\eref{SCClmm_e2b}.
\end{proof}

\begin{rmk}\label{SCC_rmk}
Let $\Symp^+_{\hb}(\X)$ denote the space of pairs consisting
of an element $(\om_i)_{i\in[N]}$ of $\Symp^+(\X)$
and a trivialization~$\Phi$ of the associated line bundle $\cO_{X_{\prt}}(X_{\eset})$
in a homotopy class~$\hb$.
By our proof of Proposition~\ref{SCC_prp}, the projection
$$\wt\Aux_{\hb}(\X)\lra \Symp^+_{\hb}(\X), \qquad
\big((\om_i)_{i\in[N]},\fR,\Phi\big)\lra\big((\om_i)_{i\in[N]},\Phi),$$
is a weak homotopy equivalence.
\end{rmk}

\section{Main construction}
\label{SympSumPf_sec}

\noindent
Let $\X$ be an SC symplectic configuration as in~\eref{SCCdfn_e} 
which satisfies~\eref{SympSumCond_e} and 
$\hb$ be a homotopy class of trivializations of the associated line bundle~\eref{PsiDfn_e}.
By the $B\!=\!\{\tn{pt}\}$ case of Proposition~\ref{SCC_prp}, we can assume that 
this SC symplectic configuration admits a regularization~$\fR$ as in~\eref{SCCregdfn_e0} 
and an $\fR$-compatible trivialization~$\Phi$ of the complex line bundle
$\cO_{\fR;X_{\prt}}(X_{\eset})$ as in Definition~\ref{Phicomp_dfn}.\\

\noindent
In Section~\ref{SetupConstr_subs}, we rescale the diffeomorphisms $\Psi_{I;i}$ 
to increase their domains so that they contain balls of size at least~$2^N$
in each fiber.
In Section~\ref{FibrConstr_subs}, we patch together open subsets of these domains to form
a smooth manifold~$\cZ'$ with a smooth map $\pi_{\cC,\ve}\!:\cZ'\!\lra\!\C$. 
The latter is obtained by scaling the trivialization~$\Phi$ so that
the restriction of~$\pi_{\cC,\ve}$ to the preimage of the ball of radius~1 in~$\C$
forms a nearly regular fibration with uniform smooth fibers.
In Section~\ref{SympFibr_subs}, we construct
a closed two-form~$\wt\om_{\cC}^{(\ve)}$ on~$\cZ'$
and show that its restriction to a neighborhood~$\cZ$ of $X_{\eset}\!\subset\!\cZ'$ is symplectic. 
If $X_{\eset}$ is compact, this implies that there exists a neighborhood~$\De$ of~0
in~$\C$ such that $\cZ|_{\pi_{\cC,\ve}^{-1}(\De)}$ is a nearly regular symplectic fibration
and its fibers are compact.
As the various pieces of~$\cZ'$ are patched together only along~$X_{\prt}$,
the compactness of~$X_{\prt}$ suffices for the first conclusion;
we comment in Remark~\ref{noncpmt_rmk} on obtaining 
this conclusion even if $X_{\prt}$ is not compact.\\
%If $X_{\prt}$ is compact, the functions $\ve,\cC\!:X_{\eset}\!\lra\!\R^+$
%appearing in Sections~\ref{FibrConstr_subs} and~\ref{SympFibr_subs} can be replaced
%by the minimum value of~$\ve$ on~$X_{\prt}$ and by the maximum value of~$\cC$ on~$X_{\prt}$,
%respectively.\\

\noindent
The construction in Sections~\ref{FibrConstr_subs} and~\ref{SympFibr_subs}
works on compact families of the relevant data on $(X_I)_{I\in\cP^*(N)}$.
By the $B\!=\!\bI$ case of Proposition~\ref{SCC_prp}, the deformation equivalence class
of the output of this construction is thus determined by the deformation equivalence class of 
the original SC symplectic configuration~$\X$ and the homotopy class~$\hb$ of 
trivializations of~\eref{PsiDfn_e}.

\subsection{Setup and notation}
\label{SetupConstr_subs}

\noindent
We begin by setting up the relevant notation.
We will need several smooth $\R^+$-valued functions on the strata $X_I$ 
and their open subspaces.
These will be denoted by~$\ve$ or~$\cC$ with some decorations, 
depending on whether the function should be sufficiently small or sufficiently large.
The former means that it is pointwise smaller than another pre-specified continuous function
on the same space or on a neighborhood of its closure;
the meaning of sufficiently large is similar.
If $X_{\prt}$ is compact, such functions can be chosen to be constant.\\

\noindent
For each $I\!\in\!\cP^*(N)$, let $\om_I\!=\!\om_i|_{X_I}$ for any $i\!\in\!I$;
this symplectic form on~$X_I$ is independent of the choice of $i\!\in\!I$.
For $i\!\in\!I\!\subset\![N]$ with $|I|\!\ge\!2$, let 
\BE{rhoIidfn_e}\rho_{I;i}\!:\cN_{X_{I-i}}X_I\lra\R\EE
be as in~\eref{SCCregdfn_e0} and 
\BE{alIidfn_e}\al_{I;i}\equiv \al_{\rho_{I;i},\na^{(I;i)}}\in  
\Ga(\cN_{X_{I-i}}X_I\!-\!X_I;T^*\cN_{X_{I-i}}X_I\big)\EE
be the connection 1-form on $\cN_{I;I-i}\!=\!\cN_{X_{I-i}}X_I$
determined by the Hermitian structure $(\rho_{I;i},\na^{(I;i)})$.\\

\noindent
We also denote~by
$$\rho_{I;i}\!:\cN X_I\lra\R \qquad\hbox{and}\qquad
\al_{I;i}\in\Ga(\cN X_I\!-\!\cN_{I;i};T^*\cN X_I\big)$$
the function and the 1-form obtained by pulling back~\eref{rhoIidfn_e} and~\eref{alIidfn_e}
by the projection map \hbox{$\pr_{I;I-i}\!:\cN X_I\!\lra\!\cN_{X_{I-i}}X_I$}.
The 1-form $\rho_{I;i}\al_{I;i}$ is then smooth on~$\cN X_I$.
Define
$$\rho_I\!: \cN X_I\lra \R, \qquad
\rho_I(v)=\max\big\{\rho_{I;i}(v)\!:\,i\!\in\!I\big\},$$
to be the square norm on~$\cN X_I$.
Let
$$\wh\om_I\equiv\wh\om_{(\rho_{I;i},\na^{(I;i)})_{i\in I}}^{\bu}
=\pi_I^*\om_I+\frac12\sum_{i\in I} \nd(\rho_{I;i}\al_{I;i})
=\pi_I^*\om_I+\frac12\sum_{i\in I} \pr_{I;I-i}^*\nd(\rho_{I;i}\al_{I;i})$$
be the closed 2-form on the total space of $\cN X_I$ as in~\eref{ombund_e}.\\

\noindent
For each $I\!\subset\![N]$ with $|I|\!\ge\!2$, the homomorphism
\begin{gather}
\notag
\La_{\C}^{\top}\cN X_I\equiv\bigotimes_{i\in I}\cN_{X_{I-i}}X_I
\lra \cO_{\fR;X_{\prt}}(X_{\eset})\big|_{X_I}
\equiv \La_{\C}^{\top}\cN X_I\otimes\bigotimes_{i\not\in I}\cO_{\fR;X_I}(X_i),\\
\label{PhiIisom_e}
w\lra w\otimes\bigotimes_{i\not\in I}\!s_{I;i}(x)
\qquad\forall~w\in\La_{\C}^{\top}\cN X_I\big|_x,~x\!\in\!X_I,
\end{gather}
is an isomorphism over~$X_I^{\star}$.
Thus, there exists a smooth function \hbox{$\cC_{\Phi;I}\!:X_I^{\star}\!\lra\!\R^+$} such~that 
\BE{PhiIsomBnd_e1}
\prod_{i\in I}\rho_{I;i}(v)= 
\cC_{\Phi;I}\big(\pi_I(v)\big) \big|\Phi\big(\Pi_{\fR;I}(v)\big)\big|^2
\qquad\forall\,v\!\in\!\cN X_I\big|_{X_I^{\star}}.\EE

\vspace{.1in}

\noindent
Given $\ve\!:X_{\eset}\!\lra\!\R^+$ and $I\!\subset\![N]$
with $|I|\!\ge\!2$, we also denote by $\ve$ the composition
$$\ve\!:\cN X_I\stackrel{\pi_I}{\lra} X_I \stackrel{\ve}{\lra}\R^+\,.$$
Define
\begin{gather*}
\cN X_I(\ve)=\big\{v\!\in\!\cN X_I\!:\,\rho_I(v)\!<\!\ve(v)\big\}, \qquad
\cN_{I;I'}(\ve)=\cN X_I(\ve)\!\cap\!\cN_{I;I'}~~\forall\,I'\!\subset\!I\,,\\
m_{\ve;I}\!:\cN X_I\lra \cN X_I, \qquad 
m_{\ve;I}(v)=\ve(v)v~~\forall~v\!\in\!\cN X_I\,.
\end{gather*}
For each $i\!\in\!I$, let
$$\rho_{I;i}^{(\ve)}\!:\cN X_I\lra\R^{\ge0},\qquad 
\rho_{I;i}^{(\ve)}=m_{\ve;I}^*\rho_{I;i}=\ve^2\rho_{I;i}\,.$$
If $\ve|_{X_I}$ is smooth, set
\BE{SympSum_e7}
\wh\om_I^{(\ve)}=m_{\ve;I}^*\wh\om_I
=\pi^*\om_I+\frac12\sum_{i\in I} \nd(\rho_{I;i}^{(\ve)}\al_{I;i})\,.\EE

\vspace{.1in}

\noindent
For $I'\!\subset\!I\!\subset\![N]$ with $|I|\!\ge\!2$ and $I'\!\neq\!\eset$, let
$$\cN_{I;I'}'\subset\cN_{I;I'}\subset\cN X_I, \qquad
\Psi_{I;I'}\!:\cN_{I;I'}'\lra X_{I'}$$
be as in~\eref{cNconfdfn_e}.
If in addition $|I'|\!\ge\!2$, let 
$$\fD\Psi_{I;I'}\!:\pi_{I;I'}^*\cN_{I;I-I'}\big|_{\cN_{I;I'}'}
\lra \cN X_{I'}\big|_{\Im(\Psi_{I;I'})}$$
be as in~\eref{fDPsiIIconf_e}.
Since $\fD\Psi_{I;I'}$ is a product Hermitian isomorphism, \eref{Phicompdfn_e} implies~that
\BE{PhiIsomBnd_e2}
\cC_{\Phi;I}\big(\pi_I(v)\big) =\cC_{\Phi;I'}\big(\Psi_{I;I'}(v)\big)
\!\!\!\!\prod_{i\in I-I'}\!\!\!\!\rho_{I;i}(v)
\quad\forall~v\!\equiv\!(v_i)_{i\in I-I'}\!\in\!\cN_{I;I'}'\big|_{X_I^{\star}}
~\hbox{s.t.}~v_i\!\neq\!0~\forall\,i\!\in\!I\!-\!I',\EE
whenever $I'\!\subset\!I\!\subset\![N]$ and $|I'|\!\ge\!2$.\\

\vspace{.1in}

\noindent
As shown in the proof of \cite[Lemma~5.8]{SympDivConf},
there exists a continuous function $\ve\!:X_{\eset}\!\lra\!\R^+$ such that 
$\ve|_{X_i}$ is smooth for all $i\!\in\![N]$,  
\BE{SympSum_e15}
\ov{\cN_{I;i}(4^N\ve^2)}\subset\cN_{I;i}',~\qquad
\ve\big(\Psi_{I;i}(v)\big)=\ve(v)
~~\forall\,v\!\in\!\cN_{I;i}(4^N\ve^2)\EE
for all $i\!\in\!I\!\subset\![N]$ with $|I|\!\ge\!2$, and 
\BE{Psiover0_e}
\Psi_{I_1;i}\big(\cN_{I_1;i}(4^N\ve^2)\big)\cap
\Psi_{I_2;i}\big(\cN_{I_2;i}(4^N\ve^2)\big)
=\Psi_{I_1\cup I_2;i}\big(\cN_{I_1\cup I_2;i}(4^N\ve^2)\big)\EE
for all $i\!\in\!I_1,I_2\!\subset\![N]$ with $|I_1|,|I_2|\!\ge\!2$.
Furthermore, $\ve$ can be chosen so that its restriction to $X_{\prt}$ is
smaller than any given continuous function $\ve_{\prt}\!:X_{\prt}\!\lra\!\R^+$.\\

\noindent
For $\ve\!:X_{\eset}\!\lra\!\R^+$ as in~\eref{SympSum_e15} and 
$i\!\in\!I\!\subset\![N]$ with $|I|\!\ge\!2$,
\BE{SympSum_e17}
\Psi_{I;i}^{(\ve)}\!: \big(\cN_{I;i}(4^N),\wh\om_I^{(\ve)}\big)\lra (X_i,\om_i), \quad
\Psi_{I;i}^{(\ve)}(v)=\Psi_{I;i}\big(m_{\ve;I}(v)\big),\EE
is a symplectomorphism onto a neighborhood of~$X_I$ in~$X_i$.
Since these symplectomorphisms satisfy the matching condition~\eref{SCCregCond_e0} 
with~$\Psi_{I;i}$ replaced by~$\Psi_{I;i}^{(\ve)}$, we can define smooth maps
$$\Psi_{I;I'}^{(\ve)}\!: \cN_{I;I'}(4^N)\lra X_{I'}, \qquad \eset\!\neq\!I'\!\subset\!I,$$
as in~\eref{cNconfdfn_e}.
By~\eref{PhiIsomBnd_e2} and the second assumption in~\eref{SympSum_e15}, 
\begin{gather}\label{PhiIsom_e2}
\cC_{\Phi;I}\big(\pi_I(v)\big) =
\cC_{\Phi;I'}\big(\Psi_{I;I'}^{(\ve)}(v)\big)
\,\ve\big(\Psi_{I;I'}^{(\ve)}(v)\big)^{2|I-I'|}
\!\!\!\!\prod_{i\in I-I'}\!\!\!\!\rho_{I;i}(v)\\
\notag
\quad\forall~v\!\equiv\!(v_i)_{i\in I-I'}\!\in\!\cN_{I;I'}(4^N\ve)\big|_{X_I^{\star}},~
I'\!\subset\!I\!\subset\![N]~
\hbox{s.t.}~v_i\!\neq\!0\,\forall\,i\!\in\!I\!-\!I',\,|I'|\!\ge\!2.
\end{gather}
By~\eref{Psiover0_e}, 
\BE{Psiover0_e2}
\Psi_{I_1;I'}^{(\ve)}\big(\cN_{I_1;I'}(4^N)\big)\cap
\Psi_{I_2;I'}^{(\ve)}\big(\cN_{I_2;I'}(4^N)\big)
=\Psi_{I_1\cup I_2;I'}^{(\ve)}\big(\cN_{I_1\cup I_2;I'}(4^N)\big)\EE
for all $I'\!\subset\!I_1,I_2\!\subset\![N]$ with $I'\!\neq\!\eset$ and
$|I_1|,|I_2|\!\ge\!2$.\\

\noindent
For $I'\!\subset\!I$ with $|I'|\!\ge\!2$, define
\begin{gather}
\label{fDeIIpr0_e}
\fD\Psi_{I;I'}^{(\ve)}\!: \pi_{I;I'}^*\cN_{I;I-I'}\big|_{\cN_{I;I'}(4^N)}
\lra \cN X_{I'}\big|_{\Im(\Psi_{I;I'}^{(\ve)})} \qquad\hbox{by}\\
\notag
\fD\Psi_{I;I'}^{(\ve)}\big(v_{I;I'},v_{I;I-I'}\big)
=\fD\Psi_{I;I'}\big(m_{\ve;I}(v_{I;I'}),v_{I;I-I'}\big)
\quad \forall\,\big(v_{I;I'},v_{I;I-I'}\big)
\in \pi_{I;I'}^*\cN_{I;I-I'}\big|_{\cN_{I;I'}(4^N)}. 
\end{gather}
By the second assumption in~\eref{SympSum_e15}, 
\BE{epscl_e}\begin{split}
\ve\!\circ\!\fD\Psi_{I;I'}^{(\ve)}
&=\ve\!\circ\pi_{I;I'}^*\pi_{I;I-I'}
\big|_{\pi_{I;I'}^*\cN_{I;I-I'}|_{\cN_{I;I'}(4^N)}}, \\
m_{\ve;I'}\!\circ\!\fD\Psi_{I;I'}^{(\ve)}
&=\fD\Psi_{I;I'}^{(\ve)}\!\circ\!\pi_{I;I'}^*m_{\ve;I}
\big|_{\pi_{I;I'}^*\cN_{I;I-I'}|_{\cN_{I;I'}(4^N)}}\,.
\end{split}\EE
Since $\fD\Psi_{I;I'}$ lifts $\Psi_{I;I'}$ to a product Hermitian isomorphism with respect to 
the product Hermitian structures $(\rho_{I;i},\na^{(I;i)})_{i\in I'}$ 
and $(\rho_{I';i},\na^{(I';i)})_{i\in I'}$ ,
$\fD\Psi_{I;I'}^{(\ve)}$ lifts $\Psi_{I;I'}^{(\ve)}$ to a product Hermitian isomorphism  
with respect to these structures.
In particular,
$$\fD\Psi_{I;I'}^{(\ve)}\big(\cN X_I(4^N)\big)=
\cN X_{I'}(4^N)\big|_{\Im(\Psi_{I;I'}^{(\ve)})}\,.$$
By~\eref{epscl_e} and~\eref{SCCcons_e2},
\BE{SCCepcons_e2}\begin{split}
\Psi_{I;I''}^{(\ve)}=\Psi_{I';I''}^{(\ve)}\!\circ\!\fD\Psi_{I;I'}^{(\ve)}\big|_{\cN_{I;I''}(4^N)}\,, 
\quad
\fD\Psi_{I;I''}^{(\ve)}=\fD\Psi_{I';I''}^{(\ve)}\circ
\fD\Psi_{I;I'}^{(\ve)}\big|_{\pi_{I;I''}^*\cN_{I;I-I''}|_{\cN_{I;I''}(4^N)}}
\end{split}\EE
for all $I''\!\subset\!I'\!\subset\!I\!\subset\![N]$ with $|I'|\!\ge\!2$ in both cases,
$|I''|\!\ge\!1$ in the first case, and $|I''|\!\ge\!2$ in the second~case.\\

\noindent
Let $\cC\!:X_{\eset}\!\lra\!\R^+$ be a continuous function such that  
\BE{SympSum_e25}
\cC\big(\Psi_{I;i}(v)\big)=\cC\big(\pi_{I;i}(v)\big)
~~\forall\,v\!\in\!\cN_{I;i}(4^N\ve),~i\!\in\!I\!\subset\![N],~|I|\!\ge\!2.\EE
For $I\!\subset\![N]$ with $|I|\!\ge\!2$, define
\BE{SympSum_e27}
\Phi_{\cC,\ve;I}\!:\cN X_I\lra\C, \quad
\Phi_{\cC,\ve;I}(v)=\cC\big(\pi_I(v)\big)\ve\big(\pi_I(v)\big)^{|I|-1}
\Phi\big(\Pi_{\fR;I}(v)\big)
\quad\forall~v\!\in\!\cN X_I\,.\EE
By~\eref{PhiIsomBnd_e1},
\BE{PhiIsom_e1}
\cC\big(\pi_I(v)\big)^2\ve(v)^{2(|I|-1)}\prod_{i\in I}\rho_{I;i}(v)= 
\cC_{\Phi;I}\big(\pi_I(v)\big) \big|\Phi_{\cC,\ve;I}(v)\big|^2
\quad\forall~v\!\in\!\cN X_I\big|_{X_I^{\star}},\EE
whenever $I\!\subset\![N]$ and $|I|\!\ge\!2$.
By~\eref{SympSum_e25}, \eref{epscl_e}, and~\eref{Phicompdfn_e},  
\BE{PhiPsi_e}
\Phi_{\cC,\ve;I'}\!\circ\!\fD\Psi_{I;I'}^{(\ve)}
=\Phi_{\cC,\ve;I}\big|_{\pi_{I;I'}^*\cN_{I;I-I'}|_{\cN_{I;I'}(4^N)}}\EE
for all $I'\!\subset\!I\!\subset\![N]$ and $|I'|\!\ge\!2$.\\

\noindent
If $I\!=\!\{i\}$  with $i\!\in\![N]$, let 
\BE{PhiSimpDfn_e}
\Phi_{\cC,\ve;I}\!=\!\pi_2:\cN X_I\equiv X_i\!\times\!\C\lra\C \EE
be the projection to the second component.
For $I'\!\equiv\!\{i\}\!\subsetneq\!I\!\subset\![N]$, define
\begin{gather}
\label{DPsiext_e}
\fD\Psi_{I;I'}^{(\ve)}\!:\pi_{I;I'}^*\cN_{I;I-I'}\big|_{\cN_{I;I'}(4^N)}
\!\equiv\!\pi_{I;I'}^*\cN_{X_{I-i}}X_I\big|_{\cN_{I;I'}(4^N)}\lra 
\cN X_{I'}\big|_{\Im(\Psi_{I;I'}^{(\ve)})}\,,\\
\notag
\fD\Psi_{I;I'}^{(\ve)}\big((v_{I;j})_{j\in I-I'},v_{I;i}\big)=
\big(\Psi_{I;I'}^{(\ve)}\big((v_{I;j})_{j\in I-I'}\big),
\Phi_{\cC,\ve;I}\big((v_{I;j})_{j\in I}\big)\big).
\end{gather} 
The restriction of this map to the subspace
\BE{diffsub_e}
\big\{((v_{I;j})_{j\in I-I'},v_{I;i})\!\in\!\pi_{I;I'}^*\cN_{I;I-I'}
|_{\cN_{I;I'}(4^N)|_{X_I^{\star}}}\!:
\,v_{I;j}\!\neq\!0~\forall\,j\!\in\!I\!-\!I'\big\}\EE
is a diffeomorphism onto an open subspace of~$\cN X_{I'}\!=\!\cN X_i$.\\

\noindent
For $I\!=\!\{i\}$, let
$$\fD\Psi_{I;I}^{(\ve)}=\id\!: \pi_{I;I}^*\cN_{I;\eset}\big|_{\cN_{I;I}(4^N)}
=\cN X_I\lra\cN X_I\,;$$
for $I\!\subset\![N]$ with $|I|\!\ge\!2$, this is already the case
by~\eref{fDPsiIIconf_e} and the $I'\!=\!I$ case of~\eref{wtPsiIIprop_e}.
By~\eref{PhiPsi_e}, \eref{PhiSimpDfn_e}, and~\eref{DPsiext_e},
\BE{PhiPsi_e2}
\Phi_{\cC,\ve;I'}\!\circ\!\fD\Psi_{I;I'}^{(\ve)}
=\Phi_{\cC,\ve;I}\big|_{\pi_{I;I'}^*\cN_{I;I-I'}|_{\cN_{I;I'}(4^N)}}\EE
for all $I'\!\subsetneq\!I\!\subset\![N]$ with $I'\!\neq\!\eset$.
By~\eref{SCCepcons_e2} and~\eref{PhiPsi_e}, 
\BE{SCCcons_e4}
\fD\Psi_{I;I''}^{(\ve)}=\fD\Psi_{I';I''}^{(\ve)}\circ\fD\Psi_{I;I'}^{(\ve)}
\big|_{\pi_{I;I''}^*\cN_{I;I-I''}|_{\cN_{I;I''}(4^N)}}\EE
for all $I''\!\subset\!I'\!\subset\!I\!\subset\![N]$ with $I''\!\neq\!\eset$.\\

\noindent
For $\ve\!:X_{\eset}\!\lra\!\R^+$ as in~\eref{SympSum_e15} and $I\!\in\!\cP^*(N)$, let
\BE{riXdfn_e}\ri{X}_I=X_{I;I}=
X_I-\bigcup_{I\subsetneq I'\subset[N]}\!\!\!\!\!\!
\ov{\Psi_{I';I}^{(\ve)}\big(\cN_{I';I}(2^{|I'|-1})\big)};\EE
see the left diagram in Figure~\ref{3conf_fig2}.
Since $\fD\Psi_{I_1\cup I_2;I_1}^{(\ve)}$ is a product Hermitian isomorphism,
\eref{Psiover0_e2} and the first equality in~\eref{SCCepcons_e2} 
imply~that 
\BE{Psiover_e}\begin{split}
\Psi_{I_1;I'}^{(\ve)}\big(\cN_{I_1;I'}(2^{|I_1|})|_{\ri{X}_{I_1}}\big)\cap 
\Psi_{I_2;I'}^{(\ve)}\big(\cN_{I_2;I'}(2^{|I_2|})|_{\ri{X}_{I_2}}\big)
\neq\eset \quad&\Lra\quad
I_1\!\supset\!I_2~~\hbox{or}~~I_1\!\subset\!I_2\,,\\
\ov{\Psi_{I_1;I'}^{(\ve)}\big(\cN_{I_1;I'}(2^{|I_1|})|_{\ri{X}_{I_1}}\big)}\cap 
\ov{\Psi_{I_2;I'}^{(\ve)}\big(\cN_{I_2;I'}(2^{|I_2|-1})\big)}
\neq\eset \quad&\Lra\quad
I_1\!\supset\!I_2\,,
\end{split}\EE
whenever $I'\!\subset\!I_1,I_2\!\subset\![N]$
with $I'\!\neq\!\eset$ and $|I_1|,|I_2|\!\ge\!2$.\\

\noindent
Let $i\!\in\![N]$.
By \eref{Psiover0_e2}, the first identity in~\eref{SCCepcons_e2},
and~\eref{PhiIsom_e2}, the function 
\begin{gather*}
\wh\ve_i\!:\ri{X}_i\cap
\bigcup_{\{i\}\subsetneq I\subset[N]}\!\!\!\!\!\!
\Psi_{I;i}^{(\ve)}\big(\cN_{I;i}(4^{|I|})\big)\lra\R^+,\\
\wh\ve_i\big(\Psi_{I;i}^{(\ve)}(v)\big)^2=
\frac{\ve(v)^{2(|I|-1)}}{\cC_{\Phi;I}(\pi_I(v))}
\!\!\!\prod_{j\in I-i}\!\!\!\!\rho_{I;j}(v) 
~~\forall\,v\!\in\!\{\Psi_{I;i}^{(\ve)}\}^{-1}(\ri{X}_i)\!\cap\!\cN_{I;i}(4^N),
~\{i\}\!\subsetneq\!I\!\subset\![N],
\end{gather*}
is well-defined and smooth.
Thus, there is a smooth function \hbox{$\wt\ve_i\!:\ri{X}_i\!\lra\!\R^+$}
such~that
$$\wt\ve_i\big(\Psi_{I;i}^{(\ve)}(v)\big)^2=
\frac{\ve(v)^{2(|I|-1)}}{\cC_{\Phi;I}(\pi_I(v))}
\!\!\!\prod_{j\in I-i}\!\!\!\!\rho_{I;j}(v)
~~\forall\,v\!\in\!\{\Psi_{I;i}^{(\ve)}\}^{-1}(\ri{X}_i)\!\cap\!\cN_{I;i}(2^N),
~\{i\}\!\subsetneq\!I\!\subset\![N].$$
By~\eref{PhiIsom_e1} and~\eref{SympSum_e25}, 
\begin{gather}\label{PhiIsom_e4}
\big|\Phi_{\cC,\ve;I}(v)\big|^2=
\cC\big(\Psi_{I;i}^{(\ve)}((v_{I;j})_{j\in I-i})\big)^2
\wt\ve_i\big(\Psi_{I;i}^{(\ve)}((v_{I;j})_{j\in I-i})\big)^2
\rho_{I;i}(v)\\
\notag\forall~
v\!\equiv\!\big((v_{I;j})_{j\in I-i},v_{I;i}\big)
\!\in\!\pi_{I;i}^*\cN_{I;I-i}
\big|_{\{\Psi_{I;i}^{(\ve)}\}^{-1}(\ri{X}_i)\cap\cN_{I;i}(2^N)},
~\{i\}\!\subsetneq\!I\!\subset\![N].
\end{gather}

\subsection{Construction of fibration}
\label{FibrConstr_subs}

\noindent
With $\cC$ as in~\eref{SympSum_e25}, define
$$\cZ_I=\begin{cases}
\{v_I\!\in\!\cN X_I|_{\ri{X}_I}\!:\,\rho_I(v_I)\!<\!2^{|I|}\},&
\hbox{if}~|I|\ge2;\\
\{(x,\la)\!\in\!\cN X_i|_{\ri{X}_i}\!:\,
|\la|^2\!<\!2\,\cC(x)^2\wt\ve_i(x)^2\},&\hbox{if}~I\!=\!\{i\}.
\end{cases}$$
For $I\!\subsetneq\!I'\!\subset\![N]$ with $I\!\neq\!\eset$, let
\begin{alignat}{1}
\label{XIIprdfn_e}
X_{I';I}&=\cZ_{I'}\!\cap\!\cN_{I';I}, \qquad 
X_{I;I'}=\Psi_{I';I}^{(\ve)}\big(X_{I';I}\big)\subset X_I,\\
\notag
\cZ_{I';I}&=\big\{v\!\in\!\cZ_{I'}\!:\,\rho_{I';i}(v)\!<\!2^{|I|}~
\forall\,i\!\in\!I\big\}
-\bigcup_{I\subsetneq J\subset I'}\!\!\!\!\!
\big\{v\!\in\!\cZ_{I'}\!:\,\rho_{I';i}(v)\!\le\!2^{|J|-1}~
\forall\,i\!\in\!J\!-\!I\big\}\,, \\
\label{cZIOverlap_e}
\cZ_{I;I'}&= \fD\Psi_{I';I}^{(\ve)}\big(\cZ_{I';I}\big)
=\cZ_I\big|_{\ri{X}_I\cap X_{I;I'}}\,.
\end{alignat}
The first two subspaces above are illustrated in Figure~\ref{3conf_fig2}.
For $|I|\!\ge\!2$, the last equality in~\eref{cZIOverlap_e}
holds because $\fD\Psi_{I';I}^{(\ve)}$
is a product Hermitian isomorphism;
for $|I|\!=\!1$, it holds by~\eref{DPsiext_e} and~\eref{PhiIsom_e4}.
This crucial equality is used in the proof of Hausdorffness in Proposition~\ref{cZtopol_prp} 
below.\\

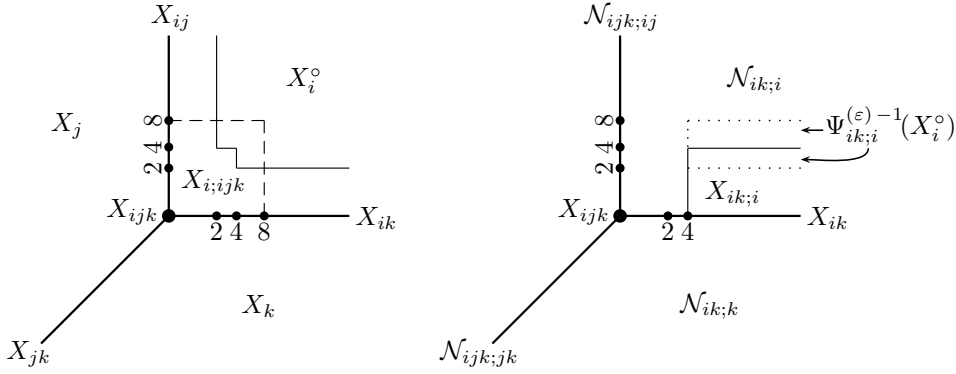
\begin{figure}
\begin{pspicture}(-4,-2.5)(11,2.4)
\psset{unit=.3cm}
\psline[linewidth=.1](-1,-2)(7,-2)\psline[linewidth=.1](-1,-2)(-1,6)
\psline[linewidth=.1](-1,-2)(-6.66,-7.66)\pscircle*(-1,-2){.3}
\rput(5,4){\sm{$\ri{X}_i$}}\rput(1,-.5){\sm{$X_{i;ijk}$}}
\rput(-5.5,2){\sm{$X_j$}}\rput(3,-6){\sm{$X_k$}}
\rput(8.2,-2.1){\sm{$X_{ik}$}}\rput(-.9,6.8){\sm{$X_{ij}$}}
\rput(-7.2,-8.1){\sm{$X_{jk}$}}
\rput(-2.6,-1.8){\sm{$X_{ijk}$}}
\psline[linewidth=.05,linestyle=dashed](-1,2.23)(3.23,2.23)
\psline[linewidth=.05](1.12,1)(1.12,6)
\psline[linewidth=.05](1.12,1)(2,1)\psline[linewidth=.05](2,.12)(2,1)
\psline[linewidth=.05](2,.12)(7,.12)
\psline[linewidth=.05,linestyle=dashed](3.23,-2)(3.23,2.23)
\rput(1.12,-2.7){\sm{$2$}}\rput(2,-2.7){\sm{$4$}}\rput(3.23,-2.7){\sm{$8$}}
\pscircle*(1.12,-2){.2}\pscircle*(2,-2){.2}\pscircle*(3.23,-2){.2}
\rput{90}(-1.7,.12){\sm{$2$}}\rput{90}(-1.7,1.05){\sm{$4$}}\rput{90}(-1.7,2.23){\sm{$8$}}
\pscircle*(-1,.12){.2}\pscircle*(-1,1.05){.2}\pscircle*(-1,2.23){.2}
% 2nd diagram starts here
\psline[linewidth=.1](19,-2)(27,-2)\psline[linewidth=.1](19,-2)(19,6)
\psline[linewidth=.1](19,-2)(13.34,-7.66)\pscircle*(19,-2){.3}
\rput(19.1,6.8){\sm{$\cN_{ijk;ij}$}}\rput(12.8,-8.1){\sm{$\cN_{ijk;jk}$}}
\rput(25,4){\sm{$\cN_{ik;i}$}}\rput(23,-6){\sm{$\cN_{ik;k}$}}
\rput(28.2,-2.1){\sm{$X_{ik}$}}\rput(24,-1){\sm{$X_{ik;i}$}}
\rput(31,2){\sm{$\Psi_{ik;i}^{(\ve)\,-1}\!(\ri{X}_i)$}}
\rput(17.4,-1.8){\sm{$X_{ijk}$}}
\psline[linewidth=.05](22,1)(27,1)\psline[linewidth=.05](22,1)(22,-2)
\psline[linewidth=.08,linestyle=dotted](22,1.05)(22,2.23)
\psline[linewidth=.08,linestyle=dotted](22,.12)(27,.12)
\psline[linewidth=.08,linestyle=dotted](22,2.23)(27,2.23)
\rput(21.12,-2.7){\sm{$2$}}\rput(22,-2.7){\sm{$4$}}
\pscircle*(21.12,-2){.2}\pscircle*(22,-2){.2}
\rput{90}(18.3,.12){\sm{$2$}}\rput{90}(18.3,1.05){\sm{$4$}}\rput{90}(18.3,2.23){\sm{$8$}}
\pscircle*(19,.12){.2}\pscircle*(19,1.05){.2}\pscircle*(19,2.23){.2}
\pnode(30,1){A}\pnode(27,0.5){B}
\nccurve[linewidth=.07,angleA=-90,angleB=0,ncurv=.5]{->}{A}{B}
\pnode(28,1.8){A}\pnode(27,1.8){B}
\nccurve[linewidth=.07,angleA=180,angleB=0,ncurv=.5]{->}{A}{B}
\end{pspicture}
\caption{The subspaces $\ri{X}_i,X_{i;ijk}\!\subset\!X_i$, and
$X_{ik;i}\!\subset\!\cN_{ik;i}$ appearing in the smoothing construction 
for a triple configuration as in Figure~\ref{3conf_fig};
the numbers indicate the value of the square distance along each axis
in $\cN X_{ijk}$.}
\label{3conf_fig2}
\end{figure}

\noindent
Define
\begin{gather}\notag
\cZ'=\bigg(\bigsqcup_{I\in\cP^*(N)}\!\!\!\!\!\!\cZ_I\bigg)\!\!\bigg/\!\!\!\sim, \\
\label{cZequiv_e}
\cZ_{I'}\supset\cZ_{I';I}\ni v_{I'}\sim \fD\Psi_{I';I}^{(\ve)}(v_{I'})\in\cZ_{I;I'}\subset\cZ_I
\qquad\forall\,I\!\subsetneq\!I'\!\subset\![N],~I\!\neq\!\eset.
\end{gather}
By  the first statement in~\eref{Psiover_e} and~\eref{SCCcons_e4}, 
$\sim$ is an equivalence relation.
The restriction of the quotient~map
\BE{qmapdfn_e}q\!: \bigsqcup_{I\in\cP^*(N)}\!\!\!\!\!\!\cZ_I \lra \cZ'\EE
to each $\cZ_I$ is injective; we thus identify~$\cZ_I$ with $q(\cZ_I)$ as~sets.
By Proposition~\ref{cZtopol_prp} below, this identification respects the smooth structures.\\

\noindent
By~\eref{PhiPsi_e2} with~$I$ and~$I'$ interchanged, the~map 
\BE{piRdfn_e}\pi_{\cC,\ve}\!:\cZ'\lra \C,\qquad
\pi_{\cC,\ve}\big(q(v)\big)=\Phi_{\cC,\ve;I}(v)\quad
\forall~v\!\in\!\cZ_I,\,I\!\in\!\cP^*(N),\EE
is well-defined.
It is continuous, since the maps $\Phi_{\cC,\ve;I}$ are continuous.
For each $i\!\in\![N]$, the~map
\BE{ioZdfn_e}\io_{\cC,\ve;i}\!: X_i\lra\cZ', \quad
\io_{\cC,\ve;i}(x)=\begin{cases}
q(x,0),&\hbox{if}~x\!\in\!\ri{X}_i;\\
q(v),&\hbox{if}~x\!=\!\Psi_{I;i}^{(\ve)}(v),\,v\!\in\!X_{I;i},\,
i\!\in\!I\!\subset\![N];
\end{cases}\EE
is well-defined, injective, and continuous.
Since $\io_{\cC,\ve;i}|_{X_{ij}}\!=\!\io_{\cC,\ve;j}|_{X_{ij}}$ for all $i,j\!\in\![N]$,
we obtain an injective continuous~map 
%\BE{ioZdfn_e2}
\BE{iocCvedfn_e}\io_{\cC,\ve}\!: X_{\eset}\lra\cZ'\,.\EE
The image of this map is $\cZ_0'\!\equiv\!\pi_{\cC,\ve}^{-1}(0)$.
The substance of the last statement in Proposition~\ref{cZtopol_prp} is that 
the fibers $\cZ_{\la}'\!\equiv\!\pi_{\cC,\ve}^{-1}(\la)$ are compact if
$X_{\eset}$ is compact, $\cC$ is sufficiently large, and~$|\la|\!<\!1$.

\begin{prp}\label{cZtopol_prp}
If $\ve|_{X_i}$ and $\cC|_{X_i}$ are smooth for all $i\!\in\![N]$, then
$\cZ'$ is a Hausdorff topological space with a smooth structure 
so that the restriction of~\eref{qmapdfn_e} to each~$\cZ_I$ is a diffeomorphism
onto~$q(\cZ_I)$.
The map~\eref{piRdfn_e} is smooth on~$\cZ'$ and a submersion outside of $\io_{\cC,\ve}(X_{\prt})$
 with respect to this smooth structure.
The maps~\eref{ioZdfn_e} are smooth embeddings;
their images form a transverse collection of closed submanifolds of~$\cZ'$ of codimension~2.
If $\cC$ is sufficiently large (depending on~$\ve$), every sequence $v_I^{(k)}$ in 
$\cZ_I\!\cap\!\pi_{\cC,\ve}^{-1}(\la)$ 
with $|\la|\!<\!1$ and 
with the sequence $\pi_I(v_I^{(k)})$ converging in~$X_I$ has a limit point in~$\cZ'$.
\end{prp}

\begin{proof}
For all $I\!\subset\!I'\!\subset\![N]$ with $I\!\neq\!\eset$,
the subspace $\cZ_{I';I}\!\subset\!\cN X_{I'}$ is open by definition.
Since each map~$\fD\Psi_{I';I}^{(\ve)}$ is a diffeomorphism onto an open subspace of~$\cN X_I$,
everywhere on its domain if $|I|\!\ge\!2$ and 
on the subspace~\eref{diffsub_e} with $I$ and $I'$ interchanged if $|I|\!=\!1$,
the subspace $\cZ_{I;I'}\!\subset\!\cN X_I$ is also open.
Since the identification maps are diffeomorphisms between open subspaces of manifolds,
the quotient map~\eref{qmapdfn_e}
is open.
Assuming $\cZ'$ is Hausdorff (as shown below), $q$ thus induces a smooth structure on~$\cZ'$.
Since $\Phi_{\cC,\ve;I}\!:\cZ_I\!\lra\!\C$ is a submersion outside of 
the subspaces $\cZ_I\!\cap\!\cN_{I;I'}$ with $I'\!\subset\!I$ such that $|I'|\!\ge\!2$, 
the map~\eref{piRdfn_e} is a submersion outside of $\io_{\cC,\ve}(X_{\prt})$.
The maps~\eref{ioZdfn_e} are smooth embeddings because their restrictions
to the preimages of~$\cZ_I$ correspond to the inclusions of the hyperplane
subbundles $\cN_{I;i}$ of~$\cN X_I$.
For the same reason, their images form a transverse collection 
of submanifolds of~$\cZ'$ of codimension~2.
These submanifolds~$X_i$ are closed in~$\cZ'$ because $X_i$ is closed in~$X_{\eset}$
and $\io_{\cC,\ve}(X_{\eset})\!=\!\pi_{\cC,\ve}^{-1}(0)$
is closed in~$\cZ'$.\\

\noindent
Let $[v],[w]\!\in\!\cZ'$ be distinct points and $I,J\!\subset\![N]$ be 
the maximal subsets so that~$[v]$ lies in the image of some $v\!\in\!\cZ_I$
under~$q$ and $[w]$ lies in the image of some $w\!\in\!\cZ_J$;
$I$ and~$J$ are well-defined by the first statement in~\eref{Psiover_e}.
If $I\!=\!J$, let $V,W\!\subset\!\cZ_I$ be disjoint open subsets around~$v$ and~$w$.
Since $q$ is an open map which is injective on~$\cZ_I$,
$q(V),q(W)\!\subset\!\cZ'$ are disjoint open subsets containing~$[v]$ and~$[w]$,
respectively. 
If $I\!\not\subset\!J$ and $J\!\not\subset\!I$, 
then the open neighborhoods $q(\cZ_I),q(\cZ_J)\!\subset\!\cZ'$ of~$[v]$ and~$[w]$,
respectively, are disjoint by the first statement in~\eref{Psiover_e}.\\

\noindent
Suppose $I\!\subsetneq\!J$.
Let $\de\!>\!0$ be such~that 
$$w\in W\equiv
\big\{v_J\!\in\!\cZ_J\!:\,
\rho_{J;j}(v_J)\!<\!2^{|J|}\!-\!\de~\forall\,j\!\in\!J\!-\!I\big\}.$$
Since $v\!\not\in\!\cZ_{I;J}$ (by the maximality assumption on~$I$),
$$v\in V\equiv \cZ_I-
\cN X_I\big|_{\Psi_{J;I}^{(\ve)}(\{v_{J;I}\in\cN_{J;I}:\,
\rho_J(v_{J;I})\le2^{|J|}-\de\})}$$
by~\eref{cZIOverlap_e} with $I'\!=\!J$.
Since~\eref{cZequiv_e} is an equivalence relation,
$q(V),q(W)\!\subset\!\cZ'$ are disjoint open subsets containing~$[v]$ and~$[w]$,
respectively.
Thus, $\cZ'$ is Hausdorff.\\

\noindent
We now verify the last claim.
%We assume that $\ve(x)\!\le\!1$ for all $x\!\in\!X_{\eset}$.
By~\eref{PhiIsom_e1}, there exists a continuous function 
\hbox{$\cC_{\Phi,\ve}\!:X_{\eset}\!\lra\!\R^+$} such~that 
\BE{PhiBnd_e}
\cC\big(\pi_I(v)\big)^2\prod_{i\in I}\rho_{I;i}(v)\le 
\cC_{\Phi,\ve}\big(\pi_I(v)\big) \big|\Phi_{\cC,\ve;I}(v)\big|^2
\quad\forall~v\!\in\!\cN X_I\big|_{\ri{X}_I},
\,I\!\subset\![N],\,|I|\!\ge\!2;\EE
the function~$\cC_{\Phi,\ve}$ depends on~$\ve$, but not on~$\cC$.
The inequality~\eref{PhiBnd_e} provides a bound on the product of the norms~$\rho_{I;i}(v)$
with $i\!\in\!I$ in terms of $|\la|\!=\!|\Phi_{\cC,\ve;I}(v)|$;
this bound becomes stronger as~$\cC$ increases.\\

\noindent
Suppose $\la\!\in\!\C^*$, $I\!\in\!\cP^*(N)$, and 
$$v_I^{(k)}\equiv  \big(v^{(k)}_{I;i}\big)_{i\in I}
\in \cZ_I\!\cap\!\Phi_{\cC,\ve;I}^{-1}(\la)\subset \cN X_I \stackrel{\pi_I}{\lra}\ri{X}_I\subset X_I$$ 
is a sequence such that $\pi_I(v_I^{(k)})$ converges to a point $x_I\!\in\!X_I$.
A subsequence of~$v_I^{(k)}$ then converges to some 
$$v_I\equiv(v_{I;i})_{i\in I}\in \cN X_I\big|_{x_I}
\qquad\hbox{s.t.}\qquad \Phi_{\cC,\ve;I}(v_I)=\la,\quad
\rho_{I;i}(v_I)\le2^{|I|}~\forall\,i\!\in\!I~~\hbox{if}~~|I|\!\ge\!2.$$
If $x_I\!\in\!\ri{X}_I$ and $|I|\!=\!\{i\}$ for some $i\!\in\![N]$, 
then $v_I\!=\!(x_I,\la)\!\in\!\cZ_I$
if $|\la|\!<\!1$ and \hbox{$2\cC^2\wt\ve_i^2\!>\!1$} (i.e.~$\cC$ is sufficiently large).
The sequence~$v_I^{(k)}$ then has a limit point in~$\cZ'$.\\

\noindent
Suppose $x_I\!\in\!\ri{X}_I$ and $|I|\!\ge\!2$.
Let $J\!\subset\!I$ be a maximal subset 
such that $\rho_{I;i}(v_I)\!<\!2^{|J|}$ for all $i\!\in\!J$.
If $J\!\neq\!\eset$, define
$$x_J=\Psi_{I;J}^{(\ve)}\big((v_{I;i})_{i\in I-J}\big)\in X_J.$$
If $x_J\!=\!\Psi_{I';J}^{(\ve)}(v_{I'})$ for some $v_{I'}\!\in\!\cN_{I';J}$
with $J\!\subsetneq\!I'\!\subset\![N]$
and $\rho_{I'}(v_{I'})\!\le\!2^{|I'|-1}$, then  
$I\!\supset\!I'$ by the second statement in~\eref{Psiover_e}
with $(I_1,I_2,I')$ replaced by $(I,I',J)$.
By the first identity in~\eref{SCCepcons_e2} and 
$\fD\Psi_{I;I'}^{(\ve)}$ being a product Hermitian isomorphism,
$$v_{I'} =\fD\Psi_{I;I'}^{(\ve)}\big((v_{I;i})_{i\in I-I'},(v_{I;i})_{i\in I'-J}\big),
\qquad \rho_{I;i}(v_I)\le 2^{|I'|-1}~~\forall\,i\!\in\!I'\!-\!J.$$
This would contradict maximality of~$J$.
Thus, $x_J\!\in\!\ri{X}_J$.
If $J\!=\!\eset$, 
replace it with any single-element subset of~$I$ and define~$x_J$ in the same way.
As in the first case,  $x_J\!\in\!\ri{X}_J$.
If $|J|\!=\!1$,
$$\fD\Psi_{I;J}^{(\ve)}(v_I)=(x_J,\la)\in\cZ_J\,,$$
as in the previous paragraph.  
If $|J|\!\ge\!2$, $\fD\Psi_{I;J}^{(\ve)}(v_I)$ lies in $\cZ_J$ as well,
since $\fD\Psi_{I;J}^{(\ve)}$ is a product Hermitian isomorphism and
$\rho_{I;i}(v_I)\!<\!2^{|J|}$ for all $i\!\in\!J$.
It follows that  $\fD\Psi_{I;J}^{(\ve)}(v_I^{(k)})\!\in\!\cZ_J$
for all $k$ large and 
$\fD\Psi_{I;J}^{(\ve)}(v_I)$ is a limit point of this set.
Thus, $\fD\Psi_{I;J}^{(\ve)}(v_I)$ is a limit point of the sequence~$v_I^{(k)}$
in~$\cZ'$.\\

\noindent
Suppose $x_I\!\not\in\!\ri{X}_I$.
Let $I'\!\subset\![N]$ be the maximal subset so that $I'\!\supset\!I$ and
$x_I\!=\!\Psi_{I';I}^{(\ve)}(v_{I'})$ for~some 
$v_{I'}\!\in\!\cN_{I';I}$ with $\rho_{I'}(v_{I'})\!=\!2^{|I'|-1}$.
By~\eref{riXdfn_e}, \eref{Psiover0_e2}, and the first identity in~\eref{SCCepcons_e2},
$I'$ is well-defined.
Furthermore,
\BE{vcond_e}
\rho_{I';j}(v_{I'})\ge 2^{|I|}\ge2~~\forall\,j\!\in\!I'\!-\!I.\EE
If $\pi_{I'}(v_{I'})\!=\!\Psi_{J;I'}^{(\ve)}(v_J)$ for some $v_J\!\in\!\cN_{J;I'}$
with $I'\!\subsetneq\!J\!\subset\![N]$ and $\rho_J(v_J)\!\le\!2^{|J|-1}$, then either
$$x_I\in\Psi_{J;I}^{(\ve)}\big(\cN_{J;I}(2^{|J|-1})\big) \qquad\hbox{or}\qquad
\rho_J(v_J)=2^{|J|-1}\,.$$
The first possibility would contradict to $x_I$ being in the closure of~$\ri{X}_I$;
the second possibility would contradict the maximality of~$I'$.
Thus, $\pi_{I'}(v_{I'})\!\in\!\ri{X}_{I'}$.\\

\noindent
For all $k\!\in\!\Z^+$ sufficiently large (so that $\pi_I(v_I^{(k)})$ lies in~$X_{I;I'}$), 
let  
$$v_{I'}^{(k)}\equiv 
\big(v_{I';I}^{(k)},v_{I';I'-I}^{(k)}\big)= \big\{\fD\Psi_{I';I}^{(\ve)}\big\}^{-1}
\big(v_I^{(k)}\big)\in \pi_{I';I}^*\cN_{I;I-I'}\big|_{\cN_{I';I}(2^{|I'|})}\,.$$
Since $\pi_I(v_I^{(k)})$ converges to $x_I$, the sequence $v_{I';I}^{(k)}$
converges to~$v_{I'}$.
If $|I|\!\ge\!2$, 
$$\rho_{I';i}(v_{I'}^{(k)})=\rho_{I;i}(v_I^{(k)})\le 2^{|I|}<2^{|I'|}
\qquad\forall~i\!\in\!I;$$
thus, $v_{I'}^{(k)}$ has a limit point in~$\cZ_{I'}$.
If $I\!=\!\{i\}$, 
$$2^{|I'|-1}\cC\big(\pi_{I'}(v_{I'}^{(k)})\big)^2\rho_{I';i}(v_{I'}^{(k)})
\le \cC\big(\pi_{I'}(v_{I'}^{(k)})\big)^2\prod_{j\in I'}\rho_{I';j}(v_{I'}^{(k)}) 
\le \cC_{\Phi,\ve}\big(\pi_I(v_{I'}^{(k)})\big)|\la|^2  $$
by~\eref{vcond_e} and~\eref{PhiBnd_e}.
Thus, the sequence $v_{I'}^{(k)}$ converges in $\cZ_{I'}$, provided 
$$\cC_{\Phi,\ve}(x)|\la|^2< 2^{|I'|-1}\cC(x)^2\cdot 2^{|I'|}
\qquad\forall~x\!\in\!X_{I'}.$$
Therefore, the last claim of the proposition holds if $\cC^2\!>\!\cC_{\Phi,\ve}$.
\end{proof}

\subsection{Symplectic structure}
\label{SympFibr_subs}

\noindent
We next define a closed 2-form $\wt\om_{\cC;I}^{(\ve)}$ on $\cZ_I$ 
for each $I\!\in\!\cP^*(N)$.
Let $\eta\!:\R\!\lra\![0,1]$ be a smooth function such~that 
$$\eta(r)=\begin{cases}0,&\hbox{if}~r\le \frac34;\\
1,&\hbox{if}~r\ge1. \end{cases}$$
For $i\!\in\!I\!\subset\![N]$ with $|I|\!\ge\!2$, let
$\eta_{I;i}=\eta\!\circ\!\rho_{I;i}\!:\cN X_I\!\lra\!\R$ and
\BE{wtomdfn_e}
\wt\om_{\cC;I}^{(\ve)}=\Bigg(\!\!\pi_I^*\om_I+
\frac12
\nd\bigg(\!\sum_{i\in I}\!\bigg(1-\!\!\prod_{j\in I-\{i\}}\!\!\!\!\!\!\eta_{I;j}\!\!\bigg)
\ve^2\rho_{I;i}\al_{I;i}
+\bigg(\sum_{i\in I}
\prod_{j\in I-\{i\}}\!\!\!\!\!\!\eta_{I;j}\!\!\bigg)\ve^2\Phi_{\cC,\ve;I}^*\big(r^2\nd\th\big)
\!\!\bigg)\!\!\Bigg)\bigg|_{\cZ_I}.\EE
For $I\!=\!\{i\}$, define 
\BE{wtomdfn_e0}
\wt\om_{\cC;I}^{(\ve)}
=\Big(\pi_1^*\om_i+\frac12\nd\big(\ve^2\Phi_{\cC,\ve;I}^*(r^2\nd\th)\big)\Big)\Big|_{\cZ_I}
=\Big(\pi_1^*\om_i+\frac12\nd\big(\ve^2\pi_2^*(r^2\nd\th)\big)\Big)\Big|_{\cZ_I}\,,\EE
where $\pi_1,\pi_2\!:X_i\!\times\!\C\!\lra\!X_i,\C$ are the two projections and
$(r,\th)$ are the polar coordinates on~$\C$.
%the restriction of~$\wt\om_{\cC;I}^{(\ve)}$ to $X_i\!=\!X_i\!\times\!\{0\}$ is~$\om_i$.
Let $\De_r\!\subset\!\C$ denote the disk of radius~$r$ centered at the origin.

\begin{lmm}\label{sympform_lmm}
If $\ve|_{X_i}$ and $\cC|_{X_i}$ are smooth for all $i\!\in\![N]$
and $\cC$ is sufficiently large (depending on~$\ve$), then
\BE{sympform_e}
\wt\om_{\cC;I'}^{(\ve)}\big|_{\Phi_{\cC,\ve;I'}^{-1}(\De_1)\cap\cZ_{I';I}}=
\big\{\fD\Psi_{I';I}^{(\ve)}\big\}^*
\wt\om_{\cC;I}^{(\ve)}\big|_{\Phi_{\cC,\ve;I'}^{-1}(\De_1)\cap\cZ_{I';I}}\EE
for all $I\!\subsetneq\!I'\!\subset\![N]$ with $I\!\neq\!\eset$. 
\end{lmm}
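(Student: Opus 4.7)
\textbf{Proof plan for Lemma~\ref{sympform_lmm}.} The plan is to compute the pullback $(\fD\Psi^{(\ve)}_{I';I})^*\wt\om^{(\ve)}_{\cC;I}$ termwise, compare with $\wt\om^{(\ve)}_{\cC;I'}$ on $\cZ_{I';I}$, and show that their difference is an exact 2-form whose potential vanishes identically on $\Phi_{\cC,\ve;I'}^{-1}(\De_1)\cap\cZ_{I';I}$ once $\cC$ is chosen large enough.

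The pullback decomposes into three contributions. First, the product Hermitian property of $\fD\Psi^{(\ve)}_{I';I}$ gives $(\fD\Psi^{(\ve)}_{I';I})^*\rho_{I;i}=\rho_{I';i}$ and $(\fD\Psi^{(\ve)}_{I';I})^*\al_{I;i}=\al_{I';i}$ (and hence $(\fD\Psi^{(\ve)}_{I';I})^*\eta_{I;j}=\eta_{I';j}$) for all $i,j\in I$. Second, $\Phi_{\cC,\ve;I}\circ\fD\Psi^{(\ve)}_{I';I}=\Phi_{\cC,\ve;I'}$ by~\eref{PhiPsi_e} and~\eref{PhiPsi_e2}, so $(\fD\Psi^{(\ve)}_{I';I})^*\Phi_{\cC,\ve;I}^*(r^2\nd\th)=\Phi_{\cC,\ve;I'}^*(r^2\nd\th)$. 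Third, applying the symplectomorphism~\eref{SympSum_e17} to $\Psi_{I';i}^{(\ve)}$ for any $i\in I$ and restricting to the subbundle $\cN_{I';I}\subset\cN_{I';i}$ (on which $\rho_{I';k}\equiv 0$ for $k\in I-\{i\}$), together with $\om_I=\om_i|_{X_I}$, yields $(\fD\Psi^{(\ve)}_{I';I})^*\pi_I^*\om_I=\pi_{I'}^*\om_{I'}+\frac12\sum_{j\in I'-I}\nd(\ve^2\rho_{I';j}\al_{I';j})$.

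On $\cZ_{I';I}$, every $j\in I'-I$ satisfies $\rho_{I';j}(v)>2^{|I|}\ge 2$, whence $\eta_{I';j}\equiv 1$. Substituting this into~\eref{wtomdfn_e} and splitting both the sum and product over $I'$ into contributions from $i\in I$ and $i\in I'-I$, a termwise comparison of the resulting $\al_{I';\cdot}$- and $\Phi_{\cC,\ve;I'}^*(r^2\nd\th)$-components produces
\begin{equation*}
\wt\om^{(\ve)}_{\cC;I'}-(\fD\Psi^{(\ve)}_{I';I})^*\wt\om^{(\ve)}_{\cC;I}
=\frac12\nd\bigg[\prod_{j\in I}\eta_{I';j}\cdot\ve^2\bigg(|I'-I|\,\Phi_{\cC,\ve;I'}^*(r^2\nd\th)-\sum_{i\in I'-I}\rho_{I';i}\al_{I';i}\bigg)\bigg].
\end{equation*}
It therefore suffices to show that the bracketed 1-form vanishes pointwise on $\Phi_{\cC,\ve;I'}^{-1}(\De_1)\cap\cZ_{I';I}$, and for this it is enough to prove that $\prod_{j\in I}\eta_{I';j}\equiv 0$ on this set.

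Applying~\eref{PhiIsom_e1} to $I'$ (valid because $\cZ_{I'}$ lies over $\ri{X}_{I'}$) gives $\prod_{i\in I'}\rho_{I';i}=\cC_{\Phi;I'}|\Phi_{\cC,\ve;I'}|^2/(\cC^2\ve^{2(|I'|-1)})$. Combining the bound $|\Phi_{\cC,\ve;I'}|<1$ with $\rho_{I';j}>2^{|I|}$ for $j\in I'-I$ yields
\begin{equation*}
\prod_{i\in I}\rho_{I';i}(v)<\frac{\cC_{\Phi;I'}(x)}{\cC(x)^2\,\ve(x)^{2(|I'|-1)}\cdot 2^{|I|(|I'|-|I|)}}
\quad\hbox{on}\quad\Phi_{\cC,\ve;I'}^{-1}(\De_1)\cap\cZ_{I';I}.
\end{equation*}
Since $\eta(s)=0$ for $s\le 3/4$, once this upper bound falls pointwise below $(3/4)^{|I|}$ some $\rho_{I';i_0}$ with $i_0\in I$ must be below $3/4$, forcing $\eta_{I';i_0}=0$ and hence $\prod_{j\in I}\eta_{I';j}=0$. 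The main obstacle is that the required lower bound on $\cC$ depends on the pair $(I,I')$ and on the base point; however, there are only finitely many such pairs and the constraint is monotone in~$\cC$, so a single smooth~$\cC$ dominating all the finitely many continuous thresholds simultaneously satisfies every hypothesis at once, which establishes the lemma.
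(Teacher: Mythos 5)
Your proposal is correct and follows essentially the same route as the paper's proof: both rest on the observations that $\eta_{I';j}\equiv 1$ on $\cZ_{I';I}$ for $j\in I'\!-\!I$, that $\prod_{j\in I}\eta_{I';j}\equiv 0$ on $\Phi_{\cC,\ve;I'}^{-1}(\De_1)\cap\cZ_{I';I}$ once $\cC$ is large enough (you derive this from~\eref{PhiIsom_e1}, the paper from~\eref{PhiBnd_e} via~\eref{sympform_e1}), and on the pullback computations using the product Hermitian property, \eref{PhiPsi_e2}, and~\eref{sympform_e2}. The only cosmetic difference is that you package the comparison as the vanishing of an explicit exact correction term, whereas the paper verifies the two sides agree termwise via the pair of identities~\eref{sympform_e5}.
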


\begin{proof}
We show that the claim holds with $\cC^2\!>\!2^N\cC_{\Phi,\ve}$, 
with $\cC_{\Phi,\ve}$ as in~\eref{PhiBnd_e}.
This assumption implies~that
\BE{sympform_e1}
\big\{i\!\in\!I\!:\rho_{I;i}(v_I)\!\le\!\frac34\big\}\neq\eset
\qquad\forall~v_I\!\in\!\cZ_I\!\cap\!\Phi_{\cC,\ve;I}^{-1}(\De_1),
~I\!\subset\!N,~|I|\!\ge\!2\,.\EE
In particular, there exists at most one~$i$ with a nonzero product 
in~\eref{wtomdfn_e}.
By the definition of~$\cZ_{I';I}$, $\rho_{I';j}\!\ge\!2$ on~$\cZ_{I';I}$ for all $j\!\in\!I'\!-\!I$
and~so
\BE{sympform_e2b}
\eta_{I';j}\big|_{\cZ_{I';I}}\equiv1 \qquad\forall~j\!\in\!I'\!-\!I\,.\EE
By~\eref{sympform_e1} and~\eref{sympform_e2b},
\BE{sympform_e5}\begin{split}
\sum_{i\in I'-I}\!\!\!\rho_{I';i}\al_{I';i}
+\sum_{i\in I}\bigg(1-\prod_{j\in I-\{i\}}\!\!\!\!\!\!\eta_{I';j}\bigg)
\rho_{I';i}\al_{I';i}
&=\sum_{i\in I'}\bigg(1-\prod_{j\in I'-\{i\}}\!\!\!\!\!\!\eta_{I';j}\bigg)
\rho_{I';i}\al_{I';i}\\
\hbox{and}\qquad
\sum_{i\in I}\prod_{j\in I-\{i\}}\!\!\!\!\!\!\eta_{I';j}
&=\sum_{i\in I'}\prod_{j\in I'-\{i\}}\!\!\!\!\!\!\eta_{I';j}
\end{split}\EE
on $\Phi_{\cC,\ve;I'}^{-1}(\De_1)\!\cap\!\cZ_{I';I}$.\\

\noindent
By \eref{SympSum_e17} with $I$ replaced by~$I'$ and~\eref{SympSum_e7},
\BE{sympform_e2}
\Psi_{I';I}^{(\ve)\,*}\om_I
=\bigg(\pi_{I';I}^*\om_{I'}+
\frac12\nd\!\! \sum_{i\in I'-I}\!\!\!\ve^2\rho_{I';i}\al_{I';i}\bigg)
\bigg|_{\cN_{I';I}(4^N)}\,.\EE
Suppose $|I|\!\ge\!2$.
The product Hermitian isomorphism $\fD\Psi_{I';I}^{(\ve)}$ satisfies
$$\pi_I\!\circ\!\fD\Psi_{I';I}^{(\ve)}=
\Psi_{I';I}^{(\ve)}\!\circ\!
\pi_{I';I}^*\pi_{I';I'-I}
\big|_{\pi_{I';I}^*\cN_{I';I'-I}|_{\cN_{I';I}(4^N)}}\,.$$
Along with~\eref{sympform_e2}, this implies~that  
\BE{sympform_e3a}
\big\{\fD\Psi_{I';I}^{(\ve)}\big\}^*\pi_I^*\om_I
=\bigg(\pi_{I'}^*\om_{I'}+
\frac12\nd\!\! \sum_{i\in I'-I}\!\!\!\ve^2\rho_{I';i}\al_{I';i}\bigg)
\bigg|_{\pi_{I';I}^*\cN_{I';I'-I}|\cN_{I';I}(4^N)}\,.\EE
Combining this with 
the second assumption in~\eref{SympSum_e15} with $I$ replaced by~$I'$ and
\eref{PhiPsi_e2}, we~obtain
\BE{sympform_e3}\begin{split}
\big\{\fD\Psi_{I';I}^{(\ve)}\big\}^*\wt\om_{\cC;I}^{(\ve)}
&=\pi_{I'}^*\om_{I'}+\frac12\nd\Bigg( 
\sum_{i\in I'-I}\!\!\!\ve^2\rho_{I';i}\al_{I';i}
+\sum_{i\in I}\bigg(1-\prod_{j\in I-\{i\}}\!\!\!\!\!\!\eta_{I';j}\bigg)
\ve^2\rho_{I';i}\al_{I';i}\\
&\hspace{2.1in}
+\sum_{i\in I}\bigg(\prod_{j\in I-\{i\}}\!\!\!\!\!\!\eta_{I';j}\bigg)
\ve^2\Phi_{\cC,\ve;I'}^*\big(r^2\nd\th\big)\Bigg).
\end{split}\EE
By \eref{sympform_e5}, the right-hand side of~\eref{sympform_e3} equals 
the right-hand side of~\eref{wtomdfn_e} with $I$ replaced by~$I'$
on $\Phi_{\cC,\ve;I'}^{-1}(\De_1)\!\cap\!\cZ_{I';I}$.
By~\eref{DPsiext_e} and~\eref{sympform_e2}, \eref{sympform_e3} also holds if $|I|\!=\!1$.
Thus, \eref{sympform_e} holds in this case as well.
\end{proof}

\noindent
By Lemma~\ref{sympform_lmm}, the 2-forms~$\wt\om_{\cC;I}^{(\ve)}$ induce
a closed 2-form~$\wt\om_{\cC}^{(\ve)}$ on~$\cZ'$. 
It remains to show that its restrictions to a neighborhood~$\cZ$ of~$\cZ_0'$ in~$\cZ'$
and to the fibers $\cZ\!\cap\!\pi_{\cC,\ve}^{-1}(\la)$  with $\la\!\in\!\C^*$ are nondegenerate.
The next lemma, which follows immediately from Corollary~\ref{conn_crl}
and~\eref{SympSum_e27}, is used to verify
that this is the case in 
the ``middle" region of each domain.

\begin{lmm}\label{Phipull_lmm}
For each $I\!\!\subset\![N]$ with $|I|\!\ge\!2$,
there exist $f_I\!\in\!C^{\i}(X_I^{\star};\R^+)$
and an $\R$-valued 1-form~$\mu_I$ on~$X_I^{\star}$ such~that 
\BE{Phipull_e}
\Phi_{\cC,\ve;I}^*r^2= \big(\cC\ve^{|I|-1}\big)^2 f_I\prod_{i\in I}\rho_{I;i},
~~
\Phi_{\cC,\ve;I}^*\nd\th = \sum_{i\in I}\!\al_{I;i}+\pi_I^*\mu_I
\quad\hbox{on}~~\cN X_I|_{X_I^{\star}}\EE
for all functions $\ve,\cC\!:X_{\eset}\!\lra\!\R^+$ that restrict to smooth functions
on~$X_I$.
\end{lmm}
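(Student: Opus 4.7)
The idea is to compute $\Phi_{\cC,\ve;I}$ in suitable product Hermitian frames over $X_I^*$ and read off $f_I$ and $\mu_I$ from the resulting coordinate expression. Fix an open subset $U\!\subset\!X_I^*$ small enough to admit local unit sections $e_{I;i}$ of each line bundle $\cN_{X_{I-i}}X_I$ (so $\rho_{I;i}(e_{I;i})\!\equiv\!1$). Writing $v_{I;i}\!=\!z_{I;i}e_{I;i}$ with $z_{I;i}\!=\!r_{I;i}e^{i\th_{I;i}}$, we have $\rho_{I;i}(v)\!=\!r_{I;i}^2$. Since $X_I^*$ is disjoint from every $X_{I\cup\{i\}}$ with $i\!\not\in\!I$, the standard section $s_{I;i}$ of $\cO_{\fR;X_I}(X_i)$ defined in~\eref{cOXDsec_e} is nowhere-vanishing on~$U$. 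Hence
$$\sigma_I\equiv \bigotimes_{i\in I}e_{I;i}\otimes\bigotimes_{i\notin I}s_{I;i}$$
is a nonvanishing local section of $\cO_{\fR;X_{\prt}}(X_{\eset})|_{X_I}$ over $U$, and the trivialization $\Phi_2$ identifies it with a smooth function $\phi_I\!\in\!C^{\i}(U;\C^*)$.

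By the definition of $\Pi_{\fR;I}$, one has $\Pi_{\fR;I}(v)\!=\!\big(\prod_{i\in I}z_{I;i}\big)\sigma_I(\pi_Iv)$, so~\eref{SympSum_e27} yields
$$\Phi_{\cC,\ve;I}(v)=(\cC\ve^{|I|-1})(\pi_Iv)\cdot\phi_I(\pi_Iv)\cdot\prod_{i\in I}z_{I;i}.$$
Taking squared absolute values gives the first equality of~\eref{Phipull_e} on $\cN X_I|_U$ with local expression $f_I\!=\!|\phi_I|^2$. Taking $\nd\log$ and the imaginary part, and using that $\cC\ve^{|I|-1}$ is real-positive, gives
$$\Phi_{\cC,\ve;I}^*\nd\th=\sum_{i\in I}\nd\th_{I;i}+\pi_I^*\,\im(\nd\phi_I/\phi_I).$$
On the other hand, the standard local expression for the connection 1-form of a Hermitian line bundle in a unit frame reads $\al_{I;i}\!=\!\nd\th_{I;i}+\pi_I^*\be_{I;i}$, where $\be_{I;i}\!\in\!\Om^1(U;\R)$ is determined by $\na^{(I;i)}e_{I;i}\!=\!-i\be_{I;i}\otimes e_{I;i}$ (the coefficient is purely imaginary by Hermitian compatibility with $\rho_{I;i}(e_{I;i})\!\equiv\!1$). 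Substituting yields the second equality of~\eref{Phipull_e} with local expression $\mu_I\!=\!\im(\nd\phi_I/\phi_I)-\sum_{i\in I}\be_{I;i}$.

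The remaining step is to check that $f_I$ and $\mu_I$ are independent of the choices of unit frames $e_{I;i}$, so that they patch to globally defined $f_I\!\in\!C^{\i}(X_I^*;\R^+)$ and $\mu_I\!\in\!\Om^1(X_I^*;\R)$. Under a gauge transformation $e_{I;i}\!\mapsto\!e^{i\psi_{I;i}}e_{I;i}$ one has $z_{I;i}\!\mapsto\!e^{-i\psi_{I;i}}z_{I;i}$ and $\phi_I\!\mapsto\!e^{i\sum_i\psi_{I;i}}\phi_I$, leaving $|\phi_I|^2$ invariant; simultaneously $\be_{I;i}\!\mapsto\!\be_{I;i}+\nd\psi_{I;i}$ while $\im(\nd\phi_I/\phi_I)$ also changes by $\sum_i\nd\psi_{I;i}$, so $\mu_I$ is invariant as well. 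Since the resulting expressions do not involve $\ve$ or $\cC$, the identities~\eref{Phipull_e} hold with the same $f_I,\mu_I$ for all smooth $\ve,\cC\!:X_I\!\lra\!\R^+$. I expect no serious obstacle: once the product-frame setup is in place, the argument is routine local bookkeeping, with the only subtlety being the sign/gauge conventions that ensure $f_I$ and $\mu_I$ glue globally.
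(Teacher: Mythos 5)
Your proof is correct and is precisely the ``direct computation'' that the paper's one-sentence argument offers as an alternative to invoking Corollary~\ref{conn_crl} (which packages the same local-frame calculation via Lemma~\ref{conn_lmm}); the key steps — expanding $\Pi_{\fR;I}$ in a product unit frame, reading off $f_I$ and $\mu_I$, and checking gauge invariance so they glue over $X_I^*$ — are exactly what that computation amounts to. The only thing to flag is a benign sign-convention issue: with the paper's conventions~\eref{naka_e}, \eref{aldfn_e}, \eref{alcK_e}, the frame identity should read $\na^{(I;i)}e_{I;i}=\fI\be_{I;i}\,e_{I;i}$ with $\al_{I;i}=\nd\th_{I;i}+\pi_I^*\be_{I;i}$ (opposite sign from yours), but this is absorbed into the definition of $\mu_I$ and does not affect the conclusion.
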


\begin{prp}\label{sympform_prp}
Let  $\ve,\cC\!:X_{\eset}\!\lra\!\R^+$ be as in~\eref{SympSum_e15} and~\eref{SympSum_e25}.
Suppose $\ve|_{X_i},\cC|_{X_i}$ are smooth for all $i\!\in\![N]$,
$\ve$ is sufficiently small, and $\cC$ is sufficiently large (depending on~$\ve$).
Then there exists a neighborhood~$\cZ$ of~$\cZ_0'$ in~$\cZ'$ 
such that the restrictions of $\wt\om_{\cC}^{(\ve)}$ to~$\cZ$
and to $\cZ\!\cap\!\pi_{\cC,\ve}^{-1}(\la)$  with $\la\!\in\!\C^*$
are nondegenerate. 
\end{prp}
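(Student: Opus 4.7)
The form $\wt\om_{\cC}^{(\ve)}$ is defined piecewise on the open cover $\{\cZ_I\}_{I\in\cP^*(N)}$ of $\cZ'$, consistently by Lemma~\ref{sympform_lmm}. Both the total nondegeneracy near $\cZ_0'$ and the fibrewise nondegeneracy may therefore be checked on each $\cZ_I$ separately, in a neighborhood of $\cZ_I\!\cap\!\cZ_0'$ inside $\cZ_I\!\cap\!\pi_{\cC,\ve}^{-1}(\De_1)$. I argue by induction on~$|I|$.

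The base case $|I|\!=\!1$ is immediate: \eref{wtomdfn_e0} gives $\wt\om_{\cC;\{i\}}^{(\ve)}\!=\!\pi_1^*\om_i+\ve^2\pi_2^*\om_\C$ on $\cZ_{\{i\}}\!\subset\!X_i\!\times\!\C$, a product of symplectic forms whose fibres of $\pi_{\cC,\ve}$ are copies of $(X_i,\om_i)$.

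For the inductive step ($|I|\!\ge\!2$) I cover $\cZ_I$ by the \emph{inner region} $A_I\!=\!\{v\!\in\!\cZ_I:\rho_{I;j}(v)\!<\!3/4\ \forall\,j\!\in\!I\}$ and the \emph{overlap regions} $\cZ_{I;J}$ of \eref{cZIOverlap_e} for $J\!\subsetneq\!I$ with $J\!\neq\!\eset$. On $A_I$ every cutoff $\eta_{I;j}$ vanishes, so \eref{wtomdfn_e} reduces to the standard twisted form $\wh\om_I^{(\ve)}$ of \eref{ombund_e}, which is nondegenerate near the zero section for $\ve$ sufficiently small by the standard Hermitian-coordinate computation; its restriction to each $\Phi_{\cC,\ve;I}^{-1}(\la)$ is controlled via the expansions of Lemma~\ref{Phipull_lmm}. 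On each $\cZ_{I;J}$, Lemma~\ref{sympform_lmm} gives $\wt\om_{\cC;I}^{(\ve)}\!=\!\{\fD\Psi_{I;J}^{(\ve)}\}^*\wt\om_{\cC;J}^{(\ve)}$; since $\fD\Psi_{I;J}^{(\ve)}$ is a local diffeomorphism and $\wt\om_{\cC;J}^{(\ve)}$ is nondegenerate by the inductive hypothesis on $|J|\!<\!|I|$, the pullback is also nondegenerate, and the intertwining \eref{PhiPsi_e2} transfers fibrewise nondegeneracy. Taking $\cZ$ to be the union of the resulting open nondegeneracy domains and shrinking $\De$ so that $\pi_{\cC,\ve}^{-1}(\De)\!\subset\!\cZ$ then yields the proposition.

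The \emph{main technical obstacle} is verifying that $A_I$ together with the cells $\cZ_{I;J}$ actually cover $\cZ_I\!\cap\!\pi_{\cC,\ve}^{-1}(\De_1)$. The hypothesis $\cC^2\!>\!2^N\cC_{\Phi,\ve}$ from \eref{sympform_e1} forces at least one $\rho_{I;i_0}(v)\!\le\!3/4$ at every such $v$, but points whose remaining $\rho_{I;j}$ all lie in the intermediate range $(3/4,2^{|J|-1}]$ fall neither into $A_I$ nor into any pigeonhole cell $\cZ_{I;J}$ with the strict thresholds of \eref{cZIOverlap_e}. Resolving this amounts to the direct computation on the intermediate stratum: substituting the Lemma~\ref{Phipull_lmm} expansion
$$\Phi_{\cC,\ve;I}^*(r^2\,\nd\th)=\cC^2\ve^{2(|I|-1)}f_I\prod_{i\in I}\rho_{I;i}\cdot\Big(\sum_{i\in I}\al_{I;i}+\pi_I^*\mu_I\Big)$$
into \eref{wtomdfn_e} and verifying by power counting in the Hermitian fibre coordinates that the top exterior power of $\wt\om_{\cC;I}^{(\ve)}$ is a positive multiple of the volume form, provided $\ve$ is small (to suppress cross-terms between $\pi_I^*\om_I$ and the $\nd(\rho_{I;i}\al_{I;i})$-pieces) and $\cC$ is large (to keep the phase contribution from \eref{sympform_e1} concentrated on a single index $i_0$). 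The same power count restricted to the level sets $\Phi_{\cC,\ve;I}^{-1}(\la)$ delivers the fibre statement.
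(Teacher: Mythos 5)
Your decomposition of $\cZ_I$ into an inner region $A_I$, the overlap cells $\cZ_{I;J}$, and an intermediate stratum is a sensible variant of the paper's classification of a point $v_I$ by the sets $I_0,I_{01},I_1$ according to the size of $\rho_{I;i}(v_I)$, and you correctly single out the intermediate stratum as the hard step.  There is, however, a genuine gap that your outline does not address: the fiberwise nondegeneracy of $\wt\om_{\cC}^{(\ve)}$ on $\cZ\!\cap\!\pi_{\cC,\ve}^{-1}(\la)$ does \emph{not} follow from the same computation that gives nondegeneracy on~$\cZ$.  Restricting a nondegenerate 2-form to a real codimension~2 submanifold produces a nondegenerate form only if the tangent spaces of that submanifold are symplectic, and this is a genuine additional condition (on $(\R^4,dx_1\!\w\!dy_1+dx_2\!\w\!dy_2)$ the hyperplane $\{y_1=0,\,x_2=0\}$ gives a zero restriction).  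Your phrase ``the same power count restricted to the level sets delivers the fibre statement'' skips this.  The paper disposes of it with a structurally different device: it shows that $\wt\om_{\cC;I}^{(\ve)}$ tames the almost complex structure $\wt J_{\ve;I}$ on the neighborhood in question, and then invokes Lemma~\ref{almostJ_lmm}, which guarantees that for $\dot v$ tangent to the level set there is a small vertical $\dot w$ with $\wt J\dot v+\dot w$ also tangent; taming plus the smallness of $\dot w$ then forces $\wt\om_{\cC;I}^{(\ve)}(\dot v,\wt J\dot v+\dot w)>0$.  This is a substantive step, not a remark, and it uniformly handles all three cases ($|I_0|\ge2$, $|I_0|=1$ with $I_{01}=\eset$, and $|I_0|=1$ with $I_{01}\neq\eset$) without a separate fiberwise power count.

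A secondary issue: even for the total-space nondegeneracy in the intermediate stratum, ``power counting in the Hermitian fibre coordinates'' is a placeholder for what is in fact a delicate expansion.  The paper writes $\wt\om_{\cC;I}^{(\ve)}$ explicitly as in \eref{omIexp_e}, first using $\fD\Psi_{I;I_0\cup I_{01}}^{(\ve)}$ to reduce to $I_1=\eset$ (a reduction your cover does not quite mirror), and then bounds the error terms pointwise by $\cC_I(\pi_I(v_I))\,|\la|$ using \eref{topbnd_e}--\eref{botbnd_e2}, concluding nondegeneracy only for $|\la|\le r_x$ on a compactly-contained neighborhood $U_x'$.  Your sketch contains the right ingredients (the $f_I$, $\mu_I$ from Lemma~\ref{Phipull_lmm}, the fact that exactly one $\rho_{I;i_0}$ can be small), but as written does not exhibit the $|\la|$-smallness of the cross-terms, which is what actually drives the estimate.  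You should make that bound explicit and, separately, add the taming argument for the fiber restriction.
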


\begin{proof}
We need to show that every $x\!\in\!X_I$ with $I\!\in\!\cP^*(N)$
has a neighborhood~$\cZ_x$ in~$\cZ'$ such that 
the restrictions of $\wt\om_{\cC}^{(\ve)}$ to~$\cZ_x$
and to $\cZ_x\!\cap\!\pi_{\cC,\ve}^{-1}(\la)$  with $\la\!\in\!\C^*$ are nondegenerate. 
Since $X_I$ is contained in~$\cZ_0'$, 
it is sufficient to show that for each $x\!\in\!\ri{X}_I$ 
there exist a neighborhood~$U_x$ of~$x$ in~$\ri{X}_I$ and $r_x\!\in\!\R^+$
such that the restrictions of $\wt\om_{\cC;I}^{(\ve)}$ to 
$\cZ_I|_{U_x}\!\cap\!\Phi_{\cC,\ve;I}^{-1}(\De_{r_x})$ and to
$\cZ_I|_{U_x}\!\cap\!\Phi_{\cC,\ve;I}^{-1}(\la)$ with $\la\!\in\!\De_{r_x}$
are nondegenerate.
Since the 2-form~\eref{wtomdfn_e0} and its restriction to a fiber of~$\Phi_{\cC,\ve;i}$ 
are symplectic over~$X_i\!\times\!\C$ (even if $\ve$ is not constant), 
it remains to consider the case $|I|\!\ge\!2$.\\

\noindent
For each $i\!\in\!I$ with $|I|\!\ge\!2$, let $\ka_{I;i}\!\in\!\Om^2(TX_I)$
be the curvature form of~$\al_{I;i}$.
Thus,
$$\wh\om_I=\pi_I^*\om_I+\frac12\sum_{i\in I}\!
\big(\rho_{I;i}\pi_I^*\ka_{I;i}\!+\!
\pr_{I;I-i}^*(\nd\rho_{I;i}\!\w\!\al_{I;i})\!\big).$$
Let $\wt{J}_I$ denote the almost complex  structure on the total space 
of $\cN X_I\!\lra\!X_I$ induced by an $\om$-tame almost complex structure~$J_I$
on~$X_I$ via the product Hermitian structure $(\rho_{I;i},\na^{(I;i)})_{i\in I}$;
see the paragraph above Lemma~\ref{almostJ_lmm}.
If $\ve$ as in~\eref{SympSum_e15} is sufficiently small, 
then $\wh\om_I$ tames~$\wt{J}_I$ over $\cN X_I(4^N\ve^2)$. 
Thus, $\wh\om_I^{(\ve)}$ tames $\wt{J}_{\ve;I}\!\equiv\!m_{\ve;I}^*\wt{J}_I$ 
over $\cN X_I(4^N)$.\\

\noindent
Let $\la\!\in\!\C^*$ and $v_I\!\in\!\cZ_I\!\cap\!\Phi_{\cC,\ve;I}^{-1}(\la)$.
Define
$$I_0=\big\{i\!\in\!I\!:\,\rho_{I;i}(v_I)\!\le\!\frac34\big\},\quad
I_{01}=\big\{i\!\in\!I\!:\,\frac34<\rho_{I;i}(v_I)\!<\!1\big\},\quad
I_1=\big\{i\!\in\!I\!:\,\rho_{I;i}(v_I)\!\ge\!1\big\}.$$
If $|\la|\!<\!1$ and $\cC^2\!>\!2^N\cC_{\Phi,\ve}$, with $\cC_{\Phi,\ve}$
as in~\eref{PhiBnd_e}, then $I_0\!\neq\!\eset$; see~\eref{sympform_e1}.\\

\noindent
Suppose $|I_0|\!\ge\!2$.
By~\eref{wtomdfn_e}, $\wt\om_{\cC;I}^{(\ve)}$ at $v_I$ is then given~by 
$$\wt\om_{\cC;I}^{(\ve)}=\pi_I^*\om_I+
\frac12 \sum_{i\in I}\nd\big(\ve^2\rho_{I;i}\al_{I;i}\big)
\equiv \wh\om_I^{(\ve)}\,.$$
This form tames $\wt{J}_{\ve;I}$ on $T_{v_I}\cZ_I$ and thus is
nondegenerate on this space.
By Corollary~\ref{almostJ_crl}, the restriction of this form to 
$T_{v_I}\Phi_{\cC,\ve;I}^{-1}(\la)$ is also nondegenerate if $|\la|$ is
sufficiently small (depending only on~$\pi_I(v_I)$).\\

\noindent
Suppose $|I_0|\!=\!1$ and $I_{01}\!=\!\eset$. 
By~\eref{wtomdfn_e}, $\wt\om_{\cC;I}^{(\ve)}$ at $v_I$ is then given~by
$$\wt\om_{\cC;I}^{(\ve)}
=\pi_I^*\om_I+ \frac12\nd\sum_{i\in I_1}\ve^2\rho_{I;i}\al_{I;i}
+\ve^2\Phi_{\cC,\ve;I}^*\big(r^2\nd\th\big)\,.$$
Along with \eref{sympform_e3a} with $I\!\subset\!I'$ replaced by $I_0\!\subset\!I$, 
the second assumption in~\eref{SympSum_e15}, \eref{PhiPsi_e2} with $I'\!=\!I_0$, 
and~\eref{PhiSimpDfn_e}, this implies~that
$$\wt\om_{\cC;I}^{(\ve)}\big|_{v_I}
=\bigg(\big\{\fD\Psi_{I;I_0}^{(\ve)}\big\}^*
\Big(\pi_1^*\om_{I_0}+\frac12\nd\big(\ve^2\pi_2^*(r^2\nd\th)\big)\!\!\Big)
\!\bigg)_{\!v_I}\,.$$
This form is nondegenerate, since $\fD\Psi_{I;I_0}^{(\ve)}$ is a diffeomorphism on
a neighborhood of~$v_I$ contained in the set~\eref{diffsub_e} with $I'\!=\!I_0$.
By~\eref{PhiPsi_e2} with $I'\!=\!I_0$, the restriction of this form to 
$T_{v_I}\Phi_{\cC,\ve;I}^{-1}(\la)$ is also nondegenerate.\\

\noindent
Suppose $I_0\!=\!\{i_0\}$ is a single-element set and $I_{01}\!\neq\!\eset$.
By~\eref{wtomdfn_e}, $\wt\om_{\cC;I}^{(\ve)}$ at $v_I$ is then given~by
$$\wt\om_{\cC;I}^{(\ve)}=\pi_I^*\om_I+
\frac12
\nd\bigg( \sum_{i\in I_{01}\cup I_1}\!\!\!\!\!\ve^2\rho_{I;i}\al_{I;i}
+\Big(1-\prod_{i\in I_{01}}\!\!\!\eta_{I;i}\Big)
\ve^2\rho_{I;i_0}\al_{I;i_0}
%&\hspace{1.8in}
+\Big(\prod_{i\in I_{01}}\!\!\eta_{I;i}\Big)
\ve^2\Phi_{\cC,\ve;I}^*\big(r^2\nd\th\big)\!\!\bigg).$$
Since $\fD\Psi_{I;I_0\cup I_{01}}^{(\ve)}$ is a product Hermitian isomorphism,
\eref{sympform_e3a}
 with $I\!\subset\!I'$ replaced by $I_0\!\cup\!I_{01}\!\subset\!I$,
the second assumption in~\eref{SympSum_e15},  and~\eref{PhiPsi_e2} 
with $I'\!=\!I_0\!\cup\!I_{01}$ thus imply~that
\begin{equation*}\begin{split}
\wt\om_{\cC;I}^{(\ve)}\big|_{v_I}
&=\big\{\fD\Psi_{I;I_0\cup I_{01}}^{(\ve)}\!\big\}^*\Bigg(\pi_{I_0\cup I_{01}}^*\om_{I_0\cup I_{01}}+
\frac12\nd\bigg(
\sum_{i\in I_{01}}\!\!\rho_{I_0\cup I_{01};i}^{(\ve)}\al_{I_0\cup I_{01};i}\\
&\hspace{.3in}+\Big(1-\prod_{i\in I_{01}}\!\!\!\eta_{I_0\cup I_{01};i}\Big)
\rho_{I_0\cup I_{01};i_0}^{(\ve)}\al_{I_0\cup I_{01};i_0}
+\Big(\prod_{i\in I_{01}}\!\!\eta_{I_0\cup I_{01};i}\Big)
\ve^2\Phi_{\cC,\ve;I_0\cup I_{01}}^*\big(r^2\nd\th\big)
\bigg)\!\!\!\Bigg)_{\!\!v_I}.
\end{split}\end{equation*}
Since $\fD\Psi_{I;I_0\cup I_{01}}^{(\ve)}$ is a diffeomorphism satisfying 
\eref{PhiPsi_e2} with $I'\!=\!I_0\!\cup\!I_{01}$, 
we can therefore assume that $I_1\!=\!\eset$.\\

\noindent
With $f_I$ and $\mu_I$ as in Lemma~\ref{Phipull_lmm}, let
\begin{gather*}
f_{\cC;I}=\cC^2f_I, \quad 
\wt{f}_{I;i_0}=\Big(1-\!\!\prod_{i\in I_{01}}\!\!\!\eta_{I;i}\!\Big)
+f_{\cC;I} \Big(\prod_{i\in I_{01}}\!\!\eta_{I;i}\rho_{I;i}^{(\ve)}\Big),\quad
\wt{f}_{I;i}=f_{\cC;I}\eta_{I;i}\!\!\!\!\!
\prod_{j\in I_{01}-i}\!\!\!\!\!\!\eta_{I;j}\rho_{I;j}^{(\ve)}
~\forall\,i\!\in\!I_{01},\\
\be_{I;i_0}=\nd 
\bigg(f_{\cC;I}\!\!\!\prod_{i\in I_{01}}\!\!\eta_{I;i}\rho_{I;i}^{(\ve)}
-\!\!\prod_{i\in I_{01}}\!\!\eta_{I;i}\bigg),\qquad
\wt\mu_I=f_{\cC;I}
\bigg(\prod_{i\in I_{01}}\!\!\!\eta_{I;i}\rho_{I;i}^{(\ve)}\bigg)\pi_I^*\mu_I\,.
\end{gather*}
Fix a Riemannian metric on~$X_I$.
Via the Hermitian data $(\rho_{I;i},\na^{(I;i)})_{i\in I}$, it lifts to 
a Riemannian metric on~$\cN X_I$. 
By the first statement in~\eref{Phipull_e} %\eref{PhiBnd_e},
and the definition of~$I_{01}$,  
there exists a smooth function \hbox{$\cC_I\!:X_I\!\lra\!\R^+$}, dependent only on~$\ve$ and~$\cC$,
such~that 
\begin{gather}
\label{topbnd_e}
\frac{1}{\cC_I(\pi_I(v_I))}\le \big|\wt{f}_{I;i_0}\big|_{v_I}
\le \cC_I\big(\pi_I(v_I)\big),\\
\label{botbnd_e}
\rho_{I;i_0}^{(\ve)}(v_I)\le \cC_I\big(\pi_I(v_I)\big)|\la|^2, \quad
\big|\nd\rho_{I;i_0}^{(\ve)}\big|_{v_I},
\big|\rho_{I;i_0}^{(\ve)}\al_{I;i_0}\big|_{v_I}\le \cC_I\big(\pi_I(v_I)\big)|\la|,\\
\label{botbnd_e2}
\big|\nd\rho_{I;i}^{(\ve)}\big|_{v_I},\big|\rho_{I;i}^{(\ve)}\al_{I;i}\big|_{v_I},
\big|\be_{I;i_0}(v_I)\big|,
\big|\wt{f}_{I;i}(v_I)\big|,\big|\nd\wt{f}_{I;i}\big|_{v_I},
\big|\wt\mu_I\big|_{v_I},\big|\nd\wt\mu_I\big|_{v_I}
\le \cC_I\big(\pi_I(v_I)\big);
\end{gather}
these bounds apply to any $v_I\!\in\!\cZ_I$ with $I_0\!=\!\{i_0\}$ and $I_1\!=\!\eset$.\\

\noindent
By~\eref{Phipull_e}, 
$$\Big(\prod_{i\in I_{01}}\!\!\eta_{I;i}\Big)
\ve^2\Phi_{\cC,\ve;I}^*\big(r^2\nd\th\big)
=f_{\cC;I}\Big(\prod_{i\in I_{01}}\!\!\eta_{I;i}\rho_{I;i}^{(\ve)}\Big)
\big(\rho_{I;i_0}^{(\ve)}\al_{I;i_0}\big)
+\sum_{i\in I_{01}}\!\rho_{I;i_0}^{(\ve)}\wt{f}_{I;i}\big(\rho_{I;i}^{(\ve)}\al_{I;i}\big) 
+\rho_{I;i_0}^{(\ve)}\wt\mu_I\,.$$
Thus, $\wt\om_{\cC;I}^{(\ve)}$ at $v_I$ is given~by
\begin{alignat}{1}
\notag
\wt\om_{\cC;I}^{(\ve)}
=&\pi_I^*\om_I+\frac12\sum_{i\in I_{01}}\!\!\nd(\rho_{I;i}^{(\ve)}\al_{I;i})
+\frac12\wt{f}_{I;i_0}
\nd(\rho_{I;i_0}^{(\ve)}\al_{I;i_0})\\
\label{omIexp_e}
&+\frac12\sum_{i\in I_{01}}\!\! 
\Big(\rho_{I;i_0}^{(\ve)}\wt{f}_{I;i}\nd(\rho_{I;i}^{(\ve)}\al_{I;i})+
\big(\wt{f}_{I;i}\nd\rho_{I;i_0}^{(\ve)}+
\rho_{I;i_0}^{(\ve)}\nd\wt{f}_{I;i}\big)
\!\w\!(\rho_{I;i}^{(\ve)}\al_{I;i})\Big)\\
&+\frac12\be_{I;i_0}\!\w\!(\rho_{I;i_0}^{(\ve)}\al_{I;i_0})
+\frac12\big(\nd\rho_{I;i_0}^{(\ve)}\!\w\!\wt\mu_I+
\rho_{I;i_0}^{(\ve)}\nd\wt\mu_I\big)\,.
\notag\end{alignat}

\vspace{.2in}

\noindent
The 2-form~$\wt\om_{\tn{top}}^{(\ve)}$ on the first line on 
the right-hand side of~\eref{omIexp_e} is the pullback by~$m_{\ve;I}$ of the 2-form
$$\pi_I^*\om_I+\frac12\sum_{i\in I_{01}}\!\!\nd(\rho_{I;i}\al_{I;i})
+\frac12\wt{f}_{I;i_0}\!\circ\!m_{\ve;I}^{-1}\,\nd\big(\rho_{I;i_0}\al_{I;i_0}\big)$$
at $m_{\ve;I}(v_I)$.
By the $\wh\om_I$-tameness of $\wt{J}_I$ on $\cN X_I(4^N)$ and~\eref{topbnd_e}, 
there exists a smooth function
\hbox{$\ve_{I;i_0}\!:X_I\!\lra\!\R^+$}, dependent only on~$\ve$ and~$\cC$, 
so that this form tames~$\wt{J}_I$ on
$$\big\{v\!\in\!\cN X_I(3^N\ve^2)\!:
\rho_{I;i_0}(v)\!\le\!\ve_{I;i_0}\big(\pi_I(v)\big)\big\}\,.$$
The 2-form~$\wt\om_{\tn{top}}^{(\ve)}$ then tames $\wt{J}_{\ve;I}$ on
$$W_{I;i_0}\equiv
\big\{v\!\in\!\cN X_I(3^N)\!:
\rho_{I;i_0}^{(\ve)}(v)\!\le\!\ve_{I;i_0}\big(\pi_I(v)\big)\big\}\,.$$
By Corollary~\ref{almostJ_crl}, 
for every $x\!\in\!\ri{X}_I$ there exist a precompact neighborhood $U_x$ and $r_x\!\in\!\R^+$
such that the restriction of~$\wt\om_{\tn{top}}^{(\ve)}$ to $T_v\Phi_{\cC,\ve;I}^{-1}(\la)$  
is nondegenerate for all $v\!\in\!W_{I;i_0}|_{\ov{U_x}}\!\cap\!\Phi_{\cC,\ve;I}^{-1}(\la)$
and $\la\!\in\!\De_{r_x}$.
There thus exists $r_x'\!\in\!(0,r_x)$ such~that 
the right-hand side of~\eref{omIexp_e} is nondegenerate on $T_v\cZ_I$ and
on $T_v\Phi_{\cC,\ve;I}^{-1}(\la)$
for all 
$v\!\in\!W_{I;i_0}|_{\ov{U_x}}\!\cap\!\Phi_{\cC,\ve;I}^{-1}(\De_{r_x'})$
and $\la\!\in\!\De_{r_x'}$ satisfying the bounds in~\eref{botbnd_e} and~\eref{botbnd_e2}. 
We conclude that $\wt\om_{\cC;I}^{(\ve)}|_{v_I}$ and the restriction $\wt\om_{\cC;I}^{(\ve)}|_{v_I}$
to $T_{v_I}\Phi_{\cC,\ve;I}^{-1}(\la)$ are nondegenerate if $\la\!\in\!\De_{r_x'}$.
\end{proof}

\begin{crl}\label{sympform_crl}
Let  $\ve,\cC\!:X\!\lra\!\R^+$ be
as in~\eref{SympSum_e15} and~\eref{SympSum_e25}.
Suppose $\ve|_{X_i},\cC|_{X_i}$ are smooth for all $i\!\in\![N]$,
$\ve$ is sufficiently small, and $\cC$ is sufficiently large (depending on~$\ve$).
Then $\io_{\cC,\ve;i}^*\wt\om_{\cC}^{(\ve)}\!=\!\om_i$ for all $i\!\in\![N]$
and  $\{\io_{\cC,\ve;i}(X_i)\}_{i\in[N]}$ is an SC symplectic divisor
in $(\cZ,\wt\om_{\cC}^{(\ve)})$ for some neighborhood~$\cZ$ of~$\cZ_0'$ in~$\cZ'$.
\end{crl}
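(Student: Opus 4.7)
The plan is to establish the two claims in turn. For the first, I would compute $\io_{\cC,\ve;i}^*\wt\om_{\cC}^{(\ve)}$ chart by chart on the open cover $\{\cZ_{I'}\}_{I'\in\cP^*(N)}$ of $\cZ'$. The image of $X_i$ meets $\cZ_{I'}$ along $\cN_{I';i}\!\cap\!\cZ_{I'}$ for each $I'\!\ni\!i$, and on this piece $\io_{\cC,\ve;i}$ factors as $\{\Psi_{I';i}^{(\ve)}\}^{-1}$ followed by the tautological inclusion $\cN_{I';i}\!\hookrightarrow\!\cZ_{I'}\!\hookrightarrow\!\cZ'$. For $I'\!=\!\{i\}$, the form~\eref{wtomdfn_e0} pulls back to $\om_i$ along the zero section because $\pi_2\!\circ\!(x\!\mapsto\!(x,0))$ is a constant map, so both $\pi_2^*(r^2\nd\th)$ and its differential pull back to zero. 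For $|I'|\!\ge\!2$, the desired conclusion reduces via the symplectomorphism property~\eref{SympSum_e17} to the pointwise identity
$$\wt\om_{\cC;I'}^{(\ve)}\big|_{\cN_{I';i}}\,=\,\wh\om_{I'}^{(\ve)}\big|_{\cN_{I';i}}$$
as 2-forms on the submanifold $\cN_{I';i}\!\subset\!\cN X_{I'}$.

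To verify this identity I would use that along $\cN_{I';i}$ each $\rho_{I';j}$ with $j\!\ne\!i$ vanishes identically, so $\eta_{I';j}\!=\!\eta(0)\!=\!0$ and $\nd\rho_{I';j}$ restricts trivially to $T\cN_{I';i}$. Consequently every summand of the form $(1-\prod_{j\in I'-k}\eta_{I';j})\ve^2\rho_{I';k}\al_{I';k}$ in~\eref{wtomdfn_e} either reduces to $\ve^2\rho_{I';i}\al_{I';i}$ (when $k\!=\!i$, since then the product vanishes on $\cN_{I';i}$) or carries the factor $\rho_{I';k}$ that kills both it and its differential on $T\cN_{I';i}$. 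By Lemma~\ref{Phipull_lmm}, $\Phi_{\cC,\ve;I'}^*(r^2\nd\th)$ contains the overall factor $\prod_{j\in I'}\rho_{I';j}$, so it and its exterior derivative drop out on restriction as well. The analogous vanishings applied to $\wh\om_{I'}^{(\ve)}$ eliminate each $\nd(\rho_{I';j}^{(\ve)}\al_{I';j})$ with $j\!\ne\!i$, and both expressions collapse to $\pi_{I'}^*\om_{I'}\!+\!\tfrac12\nd(\rho_{I';i}^{(\ve)}\al_{I';i})$ on $T\cN_{I';i}$.

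For the SC-divisor claim, Lemma~\ref{cZtopol_lmm} already furnishes closed codimension-two submanifolds $\io_{\cC,\ve;i}(X_i)$ forming a transverse collection, and Proposition~\ref{sympform_prp} supplies a neighborhood~$\cZ$ of~$\cZ_0'$ on which $\wt\om_{\cC}^{(\ve)}$ is symplectic. The injectivity and mutual compatibility of the~$\io_{\cC,\ve;i}$ identify $\bigcap_{i\in I}\io_{\cC,\ve;i}(X_i)$ with the image of~$X_I$ in~$\cZ$; the first claim together with $\om_i|_{X_I}\!=\!\om_I$ then makes this intersection symplectic.

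The remaining obstacle is matching intersection and $\om$-orientations on each stratum. I would reduce this to the local model over~$\cZ_I$: there $\io(X_I)$ is the zero section of $\cN X_I$, each $\io_{\cC,\ve;j}(X_j)$ with $j\!\in\!I$ appears as the complex hyperplane subbundle $\cN_{I;j}\!\subset\!\cN X_I$, and on a neighborhood of the zero section the ambient form agrees with $\wh\om_I^{(\ve)}$ by the first step. The product Hermitian structure $(\rho_{I;j},\na^{(I;j)})_{j\in I}$ equips each normal summand $\cN_{X_{I-j}}X_I$ with a complex line structure compatible with $\wh\om_I^{(\ve)}$; the intersection orientation on $\io(X_I)$ induced from the ambient symplectic orientation and these complex normal directions is therefore the $\om_I$-orientation. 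The hypothesis that $\{X_{ij}\}_{j\ne i}$ is an SC symplectic divisor in each $(X_i,\om_i)$ supplies the codimension-two agreement, and together with~\eref{SympSum_e17} these orientation agreements extend consistently to every~$I$.
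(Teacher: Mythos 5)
Your overall structure for both halves matches the paper's proof, and your reduction of the first claim to the pointwise identity $\wt\om_{\cC;I'}^{(\ve)}\big|_{T\cN_{I';i}}=\wh\om_{I'}^{(\ve)}\big|_{T\cN_{I';i}}$ is correct. However, there is a genuine error in the justification of that identity: you assert that along $\cN_{I';i}$ each $\rho_{I';j}$ with $j\!\ne\!i$ vanishes. This is backwards. By the paper's definition $\cN_{I;I'}\!\equiv\!\bigoplus_{k\in I-I'}\cN_{X_{I-k}}X_I$, the subbundle $\cN_{I';\{i\}}\!=\!\cN_{X_i}X_{I'}$ consists of exactly the summands indexed by $j\!\in\!I'\!-\!\{i\}$; it is the \emph{$i$-th} summand that is absent. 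So on $\cN_{I';i}$ one has $\rho_{I';i}\!\equiv\!0$ and $\eta_{I';i}\!\equiv\!0$, while the $\rho_{I';j}$ with $j\!\ne\!i$ are generically nonzero.

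This reversal propagates into your collapsed formula. You conclude that both forms reduce on $T\cN_{I';i}$ to $\pi_{I'}^*\om_{I'}\!+\!\tfrac12\nd(\rho_{I';i}^{(\ve)}\al_{I';i})$, keeping only the $j\!=\!i$ term; the correct collapse keeps the terms $j\!\ne\!i$, namely
$$\pi_{I'}^*\om_{I'}+\tfrac12\sum_{j\in I'-i}\nd\big(\rho_{I';j}^{(\ve)}\al_{I';j}\big)\Big|_{T\cN_{I';i}},$$
which is the form the paper derives and which is exactly what the symplectomorphism statement~\eref{SympSum_e17} of $\Psi_{I';i}^{(\ve)}\!:(\cN_{I';i},\wh\om_{I'}^{(\ve)})\!\to\!(X_i,\om_i)$ requires. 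The correct mechanism is: $\eta_{I';i}\!=\!0$ forces $\prod_{l\in I'-\{j\}}\eta_{I';l}\!=\!0$ for every $j\!\ne\!i$ (since $i\!\in\!I'\!-\!\{j\}$), so the coefficients on those terms become~$1$; the $j\!=\!i$ term carries $\rho_{I';i}\al_{I';i}$, which vanishes identically on $\cN_{I';i}$ as a 1-form; and $\Phi_{\cC,\ve;I'}^*(r^2\nd\th)$ likewise vanishes on $\cN_{I';i}$ as a 1-form because $\Phi_{\cC,\ve;I'}$ does. Naturality of $\nd$ then kills the exterior derivatives of these vanishing 1-forms under restriction to $T\cN_{I';i}$. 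With this correction, your first half aligns with the paper's; your second half (transversality and closedness from Lemma~\ref{cZtopol_lmm}, nondegeneracy from Proposition~\ref{sympform_prp}, and matching of intersection and symplectic orientations via the complex hyperplane subbundles of $\cN X_I$) is sound and follows the same route as the paper.
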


\begin{proof}
By the first case in~\eref{ioZdfn_e} and~\eref{wtomdfn_e0},
$$\io_{\cC,\ve;i}^*\wt\om_{\cC}^{(\ve)}\big|_{\ri{X}_i}=\om_i|_{\ri{X}_i}
~~\forall~i\!\in\![N]\,.$$
By the second in~\eref{ioZdfn_e},  \eref{wtomdfn_e}, and 
the vanishing of $\eta_{I;i}$, $\rho_{I;i}\al_{I;i}$, and $\Phi_{\cC,\ve;I}$
along $X_{I;i}\!\subset\!\cN_{I;i}$,
$$\big\{\Psi_{I;i}^{(\ve)}\big\}^*\io_{\cC,\ve;i}^*\wt\om_{\cC}^{(\ve)}|_{X_{I;i}}=
\Bigg(\!\!\pi_I^*\om_I+
\frac12
\nd\bigg(\!\sum_{j\in I-i}\!\!\ve^2\rho_{I;j}\al_{I;j}\bigg)\!\!\Bigg)\bigg|_{X_{I;i}}
~~\forall~\{i\}\!\subsetneq\!I\!\subset\![N].$$
The first claim of the corollary now follows from~\eref{SympSum_e17}.\\

\noindent
By Proposition~\ref{cZtopol_prp}, $\{\io_{\cC,\ve;i}(X_i)\}_{i\in[N]}$ is
a transverse collection of closed submanifolds of~$\cZ'$ of codimension~2.
By Proposition~\ref{sympform_prp} and the first claim of the corollary, 
the intersections of these submanifolds
are symplectic submanifolds of $(\cZ,\wt\om_{\cC}^{(\ve)})$ 
for some neighborhood~$\cZ$ of~$\cZ_0'$ in~$\cZ'$.
The preimages of these intersections in~$\cZ_I$ correspond
to the intersections of the hyperplane subbundles $\cN_{I;i}\!\subset\!\cN X_I$
on a neighborhood of~$\ri{X}_I$.
These hyperplanes are $\wt\om_{\cC;I}^{(\ve)}$-orthogonal; 
see~\eref{TrivExt1_e5}.
Thus, the symplectic orientations on neighborhoods of~$\ri{X}_I$
in the corresponding subbundles $\cN_{I;I'}\!\subset\!\cN X_I$
are the same as the intersection orientations induced from 
the hyperplane  subbundles $\cN_{I;i}\!\subset\!\cN X_I$.
This establishes the last claim of the corollary.
\end{proof}

\begin{rmk}\label{noncpmt_rmk}
We believe that the dependence of the maximal norm~$r_x$ for the acceptable values of 
\hbox{$\la\!\in\!\C$} on~$x$
in the proof of Proposition~\ref{sympform_prp} can be dropped
by constructing the trivializations $(\wt\Phi_t)_{t\in B}$
in Proposition~\ref{SCC_prp} along with constructing 
the regularizations~$(\fR_t)_{t\in B}$ in the proof of \cite[Theorem~2.17]{SympDivConf}.
This would then lead to a fibration~$\cZ$ with uniform smooth fibers over
a neighborhood~$\De$ of~0 in~$\C$ and remove the compactness assumption from
the last statement of Theorem~\ref{SympSum_thm}.
\end{rmk}

\section{The smoothability criterion}
\label{SympSumTriv_sec}

\noindent
We show in Section~\ref{SympSumConv_subs} that a nearly regular symplectic fibration 
$(\cZ,\om_{\cZ},\pi)$ as in Definition~\ref{SimpFibr_dfn}
determines a homotopy class of trivializations of 
the associated complex line bundle~$\cO_{\cZ}(\cZ_0)$;
see Proposition~\ref{InducedIsom_prp}.
If $(\cZ,\om_{\cZ},\pi)$ is a one-parameter family of smoothings of an SC symplectic variety 
$(X_{\eset},(\om_i)_{i\in[N]})$ as in Definition~\ref{SCC_dfn}, 
this line bundle restricts to~\eref{PsiDfn_e} over the singular locus~$X_{\prt}$.
Proposition~\ref{InducedIsom_prp} then determines a homotopy class of trivializations 
of~\eref{PsiDfn_e}.
In particular, \eref{SympSumCond_e} is a necessary condition 
for an SC symplectic variety to be smoothable.
By Proposition~\ref{PhiExt_prp} proved in Section~\ref{TrivExt_subs}, the homotopy class of trivializations determined by a one-parameter family $(\cZ,\wt\om_{\cC}^{(\ve)},\pi_{\cC,\ve})$
of smoothings
constructed in Section~\ref{SympSumPf_sec} is the homotopy class used to construct this~family.\\

\noindent
Proposition~\ref{InducedIsom_prp} and its proof readily extend to families 
of nearly regular symplectic fibrations parametrized by a manifold~$B$.
They endow a natural complex line bundle $\cO_{B;\cZ}(\cZ_0)$ over the total space
of such a family with a canonical homotopy class of trivializations. 
Proposition~\ref{PhiExt_prp} and its proof extend directly to compact families 
of the relevant data on $(X_I)_{I\in\cP^*(N)}$.
They ensure that the family of one-parameter families of smoothings then arising
from the construction of Section~\ref{SympSumPf_sec} encodes the homotopy class
of trivializations used to obtain this family.

\subsection{The necessity of~\eref{SympSumCond_e}}
\label{SympSumConv_subs}

\noindent
If $V\!\subset\!X$ is a smooth symplectic divisor,
the line bundle $\cO_X(V)$ has a canonical section~$s_V$ with zero set~$V$;
it is described similarly to~\eref{cOXDsec_e}.
Thus, any tensor product of such line bundles also has a canonical section;
its  zero set is the union of the associated symplectic divisors.

\begin{prp}\label{InducedIsom_prp}
Let $(\cZ,\om_{\cZ},\pi)$ be a nearly regular symplectic fibration over~$\De$
as in Definition~\ref{SimpFibr_dfn}
and $s_{\eset}$ be the canonical section of the complex line bundle
$$\cO_{\cZ}(\cZ_0)\equiv\bigotimes_{i\in[N]}\!\!\cO_{\cZ}(X_i)\lra \cZ.$$
%and $(\De,0)\!\lhra\!(\C,0)$ be an embedding as in Lemma~\ref{OrientBase_lmm}.
Then there exists a trivialization~$\Phi_{\cZ}$ of $\cO_{\cZ}(\cZ_0)$ such~that 
the smooth maps
$$\Phi_{\cZ}\!\circ\!s_{\eset}|_{\cZ-\cZ_0},\pi|_{\cZ-\cZ_0}\!: \cZ\!-\!\cZ_0\lra\C^*$$
are homotopic.
Furthermore, any two such trivializations~$\Phi_{\cZ}$ are homotopic.
\end{prp}

\begin{lmm}\label{OrientBase_lmm}
Let $(\cZ,\om_{\cZ},\pi)$ be as in Proposition~\ref{InducedIsom_prp}
and $\cZ_{0;\prt}\!\subset\!\cZ_0$ be the singular locus of~$\cZ_0$.
If $\cZ_0$ is connected, there exists a smooth embedding $(\De,0)\!\lhra\!(\C,0)$
such that the linear~map
\BE{OrientBase_e}D\pi\!: 
\cN_{\cZ}\big(\cZ_0\!-\!\cZ_{0;\prt}\big)\lra T_0\De \cong T_0\C \cong \C\EE
induced by $\nd\pi$ is an orientation-preserving trivialization of 
$\cN_{\cZ}(\cZ_0\!-\!\cZ_{0;\prt})$.
\end{lmm}

\begin{proof} Choose a point $p\!\in\!\cZ_0\!-\!\cZ_{0;\prt}$ and 
a smooth embedding $(\De,0)\!\lhra\!(\C,0)$ such that the isomorphism  
$$D_p\pi\!:  \cN_{\cZ}(\cZ_0\!-\!\cZ_{0;\prt})\big|_p \lra T_0\De \cong T_0\C \cong \C$$ 
is orientation-preserving.
Let $\cZ^+,\cZ^-\!\subset\!\cZ\!-\!\cZ_{0;\prt}$ be the subspaces of points~$x$ 
such that the isomorphism
\BE{OrientBase_e3}
D_x\pi\!:  \cN_{\cZ}\big(\cZ_{\pi(x)}\!-\!\cZ_{0;\prt}\big)\big|_x 
\lra T_{\pi(x)}\De \cong T_{\pi(x)}\C\EE
is orientation-preserving and orientation-reversing, respectively.
By assumption, $p\!\in\!\cZ^+$.
Since $\cZ_0$ is connected and $\cZ_{0;\prt}$ consists of codimension~4 submanifolds
of~$\cZ$, the complement~$\cZ^*$ of~$\cZ_{0;\prt}$ in the topological component of~$\cZ$
containing~$\cZ_0$ is connected as well.
Since the disjoint open subsets $\cZ^+$ and~$\cZ^-$ cover~$\cZ^*$,
$\cZ^*\!\subset\!\cZ^+$ and thus~\eref{OrientBase_e3} is orientation-preserving for 
all $x\!\in\!\cZ_0\!-\!\cZ_{0;\prt}$.
\end{proof}

\begin{proof}[{\bf{\emph{Proof of Proposition~\ref{InducedIsom_prp}}}}]
If $s_1$ and $s_2$ are non-vanishing sections of a complex line bundle~$L$
over some space~$X$, we write $s_1\!\sim\!s_2$ if
$s_1$ and $s_2$ are homotopic through non-vanishing sections of~$L$.
A trivialization~$\Phi$ of~$L$ 
corresponds to a non-vanishing section of~$L$.
The equivalence classes of non-vanishing sections of~$L$ 
correspond to the homotopy classes of trivializations of~$L$.
The first claim of the proposition is equivalent to the existence of 
a non-vanishing section~$s$ of $\cO_{\cZ}(\cZ_0)$ so that the smooth~maps
\BE{InducedIsom_e1}(s_\eset/s)|_{\cZ-\cZ_0},\pi|_{\cZ-\cZ_0}\!: \cZ\!-\!\cZ_0\lra\C^*\EE
are homotopic.\\

\noindent
We can assume that $\cZ_0$ is connected.
Let $(\De,0)\!\lhra\!(\C,0)$ be an embedding as in Lemma~\ref{OrientBase_lmm},
$$\cN=\cN_{\cZ}(\cZ_0\!-\!\cZ_{0;\prt}), \qquad \cN_i=\cN_{\cZ}X_i\,,$$ 
and $U\!\subset\!\cZ$ be an open subset such that the inclusions
$$\cZ\!-\!U\lra \cZ\!-\!\cZ_{0;\prt} \qquad\hbox{and}\qquad
\cZ\!-\!(\cZ_0\!\cup\!U) \lra \cZ\!-\!\cZ_0 $$
are homotopy equivalences.
Denote by $\Psi_{\De}\!: \wt\De\!\lra\!\De$ the canonical identification
of a neighborhood~$\wt\De$ of~$0$ in $T_0\De\!=\!T_0\C$ with $\De\!\subset\!\C$
and by~$\fI$ the standard complex structure on~$T_0\De$.
Since the restriction of~\eref{OrientBase_e} to each fiber of~$\cN$ is orientation-preserving,
the complex structure $\wt\fI\!\equiv\!\{D\pi\}^*\fI$ is tamed by~$\om_{\cZ}|_{\cN}$.\\

\noindent
Let $\Psi_i\!: \cN_i'\!\lra\!\cZ$, with $i\!\in\![N]$, be regularizations of $X_i$ in~$\cZ$ in
the sense of Definition~\ref{smreg_dfn} such~that 
\begin{gather}
\notag
\Im\big(\Psi_i|_{\cN_i'|_{X_i-U}}\big)\cap\Im\big(\Psi_j|_{\cN_j'|_{X_j-U}}\big)=\eset 
\quad\forall~i,j\!\in\![N],\, i\!\neq\!j, \\
\label{projectiontrivializaion_eqn}
\pi\!\circ\!\Psi_i\!=\!\Psi_{\De}\!\circ\!D\pi\!\big|_{\cN'_i|_q}\!:\,
\cN'_i|_q \lra \De\!\subset\!\C \quad\forall~q\!\in\!X_i\!-\!U.
\end{gather}
We can use these regularizations and the complex structure~$\wt\fI$
to construct the line bundles $\cO_{\cZ}(X_i)$ as in (\ref{cOXVdfn_e}).
The canonical regularization~$\Psi_{\De}$ of~$0$ in~$\C$ and 
the standard complex structure~$\fI$ on~$T_0\De$
similarly determine a line bundle $\cO_{\De}(0)\!\lra\!\De$.
By \eref{projectiontrivializaion_eqn}, $D\pi$ induces an isomorphism
\begin{equation*}
\Phi\!:  \cO_{\cZ}(\cZ_0)\big|_{\cZ-U} \lra \pi^*\cO_{\De}(0)\big|_{\cZ-U},
\quad
\begin{aligned} 
\big[x,c\big]&\lra\big[\pi(x),c\big];\\
\big[\Psi_i(v),v,w\big]&\lra\big[\pi(\Psi_i(v)),D\pi(v),D\pi(w)\big];
\end{aligned}\end{equation*}
see \eref{cOXVdfn_e} for the notation.\\

\noindent
Let $s_0$ be the canonical section of $\cO_{\De}(0)$
and $s_1$ be the constant section~1 of $\cO_{\De}(0)$, i.e.
$$s_1\big(\Psi_{\De}(u)\big)=\big[\Psi_{\De}(u),u,1\big]
\qquad\forall\,u\!\in\!\wt\De.$$
In particular, $s_0/s_1\!: \De\!\lra\!\C$ is the inclusion map.
They lift to sections  $\pi^*s_0$ and $\pi^*s_1$ of $\pi^*\cO_{\De}(0)$ so~that 
\BE{pistarrat_e}\pi^*s_0/\pi^*s_1\!=\!\pi\!: \cZ\lra\C.\EE
By~\eref{projectiontrivializaion_eqn}, 
\BE{seset_e}s_{\eset}|_{\cZ-U}=\Phi^{-1}\!\circ\!\pi^*s_0\big|_{\cZ-U}\,.\EE
Since the inclusion $\cZ\!-\!U\!\lra\!\cZ\!-\!\cZ_{0;\prt}$ is a homotopy equivalence,
there exists a non-vanishing section $s'$ of $\cO_{\cZ}(\cZ_0)|_{\cZ-\cZ_{0;\prt}}$
such~that  
\BE{restriction1_eqn} 
s'|_{\cZ-U}\sim\Phi^{-1}\!\circ\!\pi^*s_1\big|_{\cZ-U}\,.\EE
By Corollary~\ref{LBisomExt_crl}\ref{LBisomExt2_it} below, 
there exists a non-vanishing section~$s$ of $\cO_{\cZ}(\cZ_0)$ so that
\BE{restriction2_eqn} s|_{\cZ-\cZ_{0;\prt}} \sim s'.\EE
By~\eref{seset_e}, \eref{restriction2_eqn}, \eref{restriction1_eqn},
and~\eref{pistarrat_e}, 
$$\big(s_{\eset}/s\big)\big|_{\cZ-(\cZ_0\cup U)}
\sim \Phi^{-1}\!\circ\!\pi^*s_0\big/\Phi^{-1}\!\circ\!\pi^*s_1
\big|_{\cZ-(\cZ_0\cup U)}=\pi\big|_{\cZ-(\cZ_0\cup U)}\,.$$
Since the inclusion $\cZ\!-\!(\cZ_0\!\cup\!U)\!\lra\!\cZ\!-\!\cZ_0$
is a homotopy equivalence, the maps~\eref{InducedIsom_e1} are homotopic.
This establishes the first claim of the proposition.\\

\noindent
The section~$s_{\eset}$ of  $\cO_{\cZ}(\cZ_0)$  does not vanish on $\cZ\!-\!\cZ_0$.
If $\Phi_{\cZ}$ and $\Phi_{\cZ}'$ are trivializations of $\cO_{\cZ}(\cZ_0)$ 
satisfying the homotopy condition in the proposition, then
$\Phi_{\cZ}|_{\cZ-\cZ_0}$ and $\Phi_{\cZ}'|_{\cZ-\cZ_0}$ are homotopic trivializations
of $\cO_{\cZ}(\cZ_0)|_{\cZ-\cZ_0}$.
By Corollary~\ref{LBisomExt_crl}\ref{LBisomExt1_it}, 
$\Phi_{\cZ}$ and $\Phi_{\cZ}'$ are thus homotopic trivializations of~$\cO_{\cZ}(\cZ_0)$.
\end{proof}

\noindent
It remains to establish the two statements of Corollary~\ref{LBisomExt_crl} used above.

\begin{lmm}\label{FibLmm_lmm}
Let $B$ be a paracompact topological space and $(E,\dot{E})\!\lra\!B$
be a relative bundle pair with fiber pair $(F,\dot{F})$ in the sense of 
\cite[Section~5.7]{Spanier}.
If $\fc\!\in\!\Z^+$ and $H_i(F,\dot{F};\Z)\!=\!0$ for all $i\!<\!\fc$,
then $H^i(E,\dot{E};\Z)\!=\!0$ for all $i\!<\!\fc$.
\end{lmm}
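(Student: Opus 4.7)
The plan is to invoke the Leray--Serre spectral sequence of the relative bundle pair $(E,\dot{E})\to B$, whose existence and convergence for paracompact bases is developed in \cite[Chapter~9]{Spanier}. In the homological convention, this spectral sequence has
\begin{equation*}
E^2_{p,q}=H_p\bigl(B;\mathcal{H}_q\bigr) \Longrightarrow H_{p+q}(E,\dot{E};\Z),
\end{equation*}
where $\mathcal{H}_q$ is the local coefficient system on $B$ whose stalk at each point of $B$ is $H_q(F,\dot{F};\Z)$.

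By the hypothesis of the lemma, the coefficient system $\mathcal{H}_q$ vanishes for every $q<\fc$, so $E^2_{p,q}=0$ on the entire horizontal strip $q<\fc$. The differentials $d_r\!:E^r_{p,q}\to E^r_{p-r,q+r-1}$ preserve this vanishing region: any differential landing in a group $E^r_{p,q}$ with $q<\fc$ originates from $E^r_{p+r,q-r+1}$, which still lies in the strip since $q-r+1\le q<\fc$ for $r\ge 2$. Inductively, $E^\infty_{p,q}=0$ whenever $q<\fc$. Since $H_n(E,\dot{E};\Z)$ carries a finite filtration with associated graded $\bigoplus_{p+q=n}E^\infty_{p,q}$, and every pair $(p,q)$ with $p+q=n<\fc$ satisfies $q\le n<\fc$, each summand vanishes; hence $H_n(E,\dot{E};\Z)=0$ for $n<\fc$, as required.

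The only substantive point to verify is the existence of the Leray--Serre spectral sequence at the level of generality needed here, i.e.\ for a relative bundle pair in Spanier's sense over a paracompact base. This is precisely the setting treated in \cite[Chapter~9]{Spanier}, so no serious obstacle is anticipated; everything else is the standard strip-vanishing manipulation on the $E^2$-page. An alternative route, if one wishes to avoid the spectral sequence, is a Mayer--Vietoris induction over a locally finite trivializing cover of~$B$: on each trivializing open $U$, $(E|_U,\dot{E}|_U)\simeq (U\!\times\!F,U\!\times\!\dot{F})$ and the K\"unneth formula gives the vanishing in degrees below~$\fc$; paracompactness of $B$ lets one patch these local vanishings together. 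Either approach yields the same conclusion.
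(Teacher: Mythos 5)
Your primary route — the Leray–Serre spectral sequence of the relative bundle pair — is genuinely different from the paper's argument. The paper proceeds elementarily: K\"unneth (for the product case $B\times(F,\dot F)$), Mayer–Vietoris plus induction (for a finite trivializing cover), and then a direct limit over compact subsets of $B$ (to remove the finiteness hypothesis, using that singular chains have compact support), finishing with the Universal Coefficients Theorem for the cohomology version it actually invokes later. Your strip-vanishing manipulation on the $E^2$-page is correct (indeed, once $E^2_{p,q}=0$ for $q<\fc$, every $E^r_{p,q}$ with $q<\fc$ vanishes automatically since each page is a subquotient of the previous one; you need not track where differentials land). The weak spot is the citation: Spanier's Chapter~9 develops the Serre spectral sequence for a single fibration, and the construction there is phrased for CW bases; it does not explicitly state the spectral sequence of a \emph{fibration pair} over an arbitrary paracompact base, which is what you need. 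This can certainly be set up (either by Serre's original cubical filtration, which does not require a CW base, together with the relative version coming from a filtered pair, or by pulling back to a CW approximation of $B$ and using that a weak equivalence of bases induces a weak equivalence of total spaces of Serre fibrations), but your claim that this is ``precisely the setting treated in Spanier, Chapter~9'' is optimistic and would need to be patched. Your alternative sketch via Mayer–Vietoris and K\"unneth is essentially the paper's proof; the one imprecision there is ``patching over a locally finite cover'': a locally finite cover can be infinite, and the clean way to handle this, as the paper does, is to pass to the direct limit over compact subsets of $B$, each of which admits a finite trivializing subcover. What the spectral sequence buys you is conceptual transparency and a proof that needs no compactness reduction; what the paper's route buys is self-containment within the elementary tools (K\"unneth, MV, compact supports) already cited elsewhere in the paper, avoiding any question about the precise generality of the Serre spectral sequence references.
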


\begin{proof}
By our assumptions and the Kunneth formula \cite[Theorem~5.3.10]{Spanier},
$$H_i\big(B\!\times\!F,B\!\times\!\dot{F};\Z\big)=0 \qquad\forall\,i\!<\!\fc.$$
By Mayer-Vietoris \cite[Corollary~5.1.14]{Spanier} and induction, this implies that 
$H_i(E,\dot{E};\Z)\!=\!0$ for all $i\!<\!\fc$ if $(E,\dot{E})\!\lra\!B$
admits a finite trivializing open cover.
Taking the direct limit over compact subsets of~$B$, we find that
$H_i(E,\dot{E};\Z)\!=\!0$ for all $i\!<\!\fc$ for any paracompact space~$B$.
The claim now follows from the Universal Coefficients Theorem \cite[Theorem~53.1]{Mu2}.
\end{proof}

\begin{crl}\label{LBisomExt_crl}
Suppose $M$ is a manifold, $\pi\!: L\!\lra\!M$ is a Hermitian line bundle,
and $M'\!\subset\!M$ is the complement of closed submanifolds $V_1,\cdots,V_{\ell}\!\subset\!M$ 
of real codimension $\fc\!\in\!\Z^+$ or higher.
\begin{enumerate}[label=(\arabic*),leftmargin=*]

\item\label{LBisomExt1_it}
If $\fc\!\ge\!2$, any two trivializations of~$L$ over~$M$ that restrict to 
homotopic trivializations of~$L|_{M'}$ are homotopic as trivializations of~$L$ over~$M$.

\item\label{LBisomExt2_it} If $\fc\!\ge\!3$, every smooth trivialization
of $L|_{M'}$ is homotopic through trivializations of $L|_{M'}$
to the restriction of a smooth trivialization of~$L$ over~$M$.

\end{enumerate}
\end{crl}

\begin{proof}
By induction, we can assume that $\ell\!=\!1$ and $V\!\equiv\!V_1$ is a closed submanifold
of~$M$ of codimension~$\fc$.
By the Tubular Neighborhood Theorem \cite[(12.11)]{BJ} and 
excision \cite[Corollary~4.6.5]{Spanier},
$$H^i(M,M';\Z)\approx H^i(\cN_XV,\cN_XV\!-\!V;\Z) \qquad\forall~i\!\in\!\Z.$$
Since $H_i(\R^{\fc},\R^{\fc-1})\!=\!0$ for $i\!<\!\fc$, 
$$H^i(M,M';\Z)\approx H^i(\cN_XV,\cN_XV\!-\!V;\Z)=0 \qquad\forall~i\!\le\!\fc\!-\!1$$
by Lemma~\ref{FibLmm_lmm}.
By the cohomology long exact sequence for the pair $(M,M')$,
the sequences
\BE{LBisomExt_e5}
0\lra H^i(M;\Z) \lra H^i(M';\Z)\lra0,  ~i\!\le\!\fc\!-\!2,
\quad 0\lra H^{\fc-1}(M;\Z) \lra H^{\fc-1}(M';\Z),\EE
where the second arrows are the restriction homomorphisms, are thus exact.\\

\noindent
(1) Suppose $\fc\!\ge\!2$.
If $\Phi,\Phi'\!:L\!\lra\!\C$ are  trivializations of~$L$,
there exists a smooth map \hbox{$f\!:M\!\lra\!\C^*$} such~that 
$$\Phi'(v) = f\big(\pi(x)\big)\Phi(v) \qquad\forall~v\!\in\!L.$$
If $\Phi|_M$ and $\Phi'|_{M'}$ are homotopic, then $f|_{M'}$ is homotopic
to  the constant map. 
By  the injectivity of~\eref{YS1H1_e} with $Y\!=\!M'$,
$f|_{M'}$ then corresponds to the trivial element of $H^1(M';\Z)$.
By the $H^1$ case of~\eref{LBisomExt_e5}, $f$ corresponds to 
the trivial element of $H^1(M;\Z)$ and so is homotopic to the identity.
Thus, $\Phi$ and $\Phi'$ are homotopic as trivializations of~$L$ over~$M$.\\

\noindent
(2) Suppose $\fc\!\ge\!3$ and $\Phi\!:L|_{M'}\!\lra\!\C$ is a trivialization of $L|_{M'}$.
By the $H^2$ case of \eref{LBisomExt_e5} and $c_1(L)|_{M'}\!=\!0$, $c_1(L)\!=\!0$.
Therefore, there exists a trivialization $\Phi'\!: L\!\lra\!\C$ of~$L$ over~$M$.
Since $\Phi$ and $\Phi'|_{M'}$ are trivializations of~$L$ over~$M'$,
there exists a smooth map $f\!:M'\!\lra\!\C^*$ such that 
\BE{LBisomExt_e7} \Phi'(v) \equiv f\big(\pi(v)\big)\Phi(v) \qquad\forall~v\!\in\!L|_{M'}.\EE
By the bijectivity of~\eref{YS1H1_e} and
 the $i\!=\!1$ case of~\eref{LBisomExt_e5}, there exists  
a smooth map $g\!: M\!\lra\!\C^*$ such that $g|_{M'}$ is homotopic to~$f$.
Define a trivialization of~$L$ by
$$\Phi''\!: L\!\lra\!\C, \qquad \Phi''(v) = g\big(\pi(v)\big)^{-1}\Phi'(v).$$
By~\eref{LBisomExt_e7}, $\Phi''|_{M'}$ is homotopic to~$\Phi$.
\end{proof}

\subsection{Equivalence of input and output trivializations}
\label{TrivExt_subs}

\noindent
We now show that the complex line bundle~\eref{PsiDfn_e} and its trivialization~$\Phi$
used in the construction of Section~\ref{SympSumPf_sec}
extend over a neighborhood of $\cZ_0'\!\equiv\!\io_{\cC,\ve}(X_{\eset})$ in 
the space~$\cZ'$ constructed in Section~\ref{FibrConstr_subs}.
Furthermore, these extensions can be chosen to lie in the homotopy class of 
 trivializations determined as in Proposition~\ref{InducedIsom_prp}
by the nearly regular symplectic fibration $(\cZ,\wt\om_{\cC}^{(\ve)},\pi_{\cC,\ve})$ 
constructed in Section~\ref{SympSumPf_sec}.
The proof of Propositions~\ref{PhiExt_prp} below makes use of 
straightforward, though somewhat technical, Lemmas~\ref{TrivExt_lmm0} and~\ref{TrivExt_lmm1};
they are deferred to the end of this section.

\begin{prp}\label{PhiExt_prp}
Suppose $\X$ is as in Theorem~\ref{SympSum_thm},
$\Phi$ is a trivialization of the complex line bundle $\cO_{X_{\prt}}(X_{\eset})$
as in the first paragraph of Section~\ref{SympSumPf_sec},
$\ve$ and $\cC$ are $\R^+$-valued functions on~$X_{\eset}$ satisfying the
conditions of Propositions~\ref{sympform_prp}, and
$(\cZ,\wt\om_{\cC}^{(\ve)},\pi_{\cC,\ve})$ is the corresponding one-parameter family of smoothings.
Then there exists a complex line bundle $\cO_{\cZ}(\cZ_0)$
with the canonical section~$s_{\eset}$ and a trivialization~$\wt\Phi_{\cC,\ve}$
such~that the associated embedding~\eref{iocCvedfn_e} induces an isomorphism
\BE{ndioisom_e}\nd\io_{\cC,\ve}|_{X_{\prt}}\!:
\cO_{X_{\prt}}(X_{\eset})\lra \io_{\cC,\ve}^*\cO_{\cZ}(\cZ_0)|_{X_{\prt}}\EE
of complex line bundles over~$X_{\prt}$ and
\BE{PhiExtPrp_e}
\wt\Phi_{\cC,\ve}\!\circ\!s_{\eset}\!=\!\pi_{\cC,\ve}\!:\cZ\lra\C,\qquad
\wt\Phi_{\cC,\ve}\!\circ\!\nd\io_{\cC,\ve}|_{X_{\prt}}\!=\!\cC\ve^{-1}\Phi\!:
\cO_{X_{\prt}}(X_{\eset})\lra\C\,.\EE
\end{prp}

\vspace{.2in}

\noindent
We continue with the notation and setup of Section~\ref{SympSumPf_sec}.
For $i\!\in\![N]$, let $\ri{X}_i\!\subset\!X_i$ be as in~\eref{riXdfn_e},
$\cZ_i'\!\subset\!\cZ'$ be the image of~$X_i$ under the map~$\io_{\cC,\ve;i}$
in~\eref{ioZdfn_e}, and
$$\pi_i\!=\!\pi_{\{i\}}\!:
\cN_{X_{\eset}}X_{\{i\}}\!=\!\cN X_i\!\equiv\!X_i\!\times\!\C\lra X_i$$
be the projection to the first coordinate.
Denote by $\rho_{\{i\};i}$ and $\al_{\{i\};i}$ the pullbacks of 
the function~$r^2$ and the 1-form $\nd\th$, respectively,
by the projection $\cN X_i\!\lra\!\C$.\\

\noindent
If in addition $i\!\in\!I\!\subset\![N]$,  let
$$X_{I;i}=\cZ_I\!\cap\!\cN_{I;i}
\subset\Dom(\Psi_{I;i}^{(\ve)})\big|_{\ri{X}_I}\subset\cN_{I;i}
\qquad\hbox{and}\qquad
X_{i;I}=\Psi_{I;i}^{(\ve)}\big(X_{I;i}\big)\subset X_i$$ 
be as in~\eref{XIIprdfn_e}.
Denote~by
$$\pi_{I;i}\!:\cN_{I;i}\lra X_I \qquad\hbox{and}\qquad 
\pr_{I;i}\!:\cN X_I\!=\!\pi_{I;i}^*\cN_{X_{I-i}}X_I\lra \cN_{I;i}$$
the projection maps (if $I\!=\!\{i\}$, $\pr_{I;i}\!=\!\pi_i$).
Let $\fI_{I;i}$ be the complex structure on the oriented rank~2 vector bundle
$\cN_{X_{I-i}}X_I$ determined 
by~$\rho_{I;i}^{\,\R}$ if $|I|\!\ge\!2$ and the standard complex structure if $|I|\!=\!1$.\\

\noindent
For each $i\!\in\![I]$, we will construct a complex line bundle and 
a smooth~map,
$$\pi_{\cN\cZ_i}\!:\cN\cZ_i\lra X_i
\qquad\hbox{and}\qquad \Psi_i'\!:\cN'\cZ_i\lra\cZ',$$
respectively, so that the latter is a diffeomorphism from a neighborhood~$\cN'\cZ_i$
of~$X_i$ in~$\cN\cZ_i$ onto  an open neighborhood of $\cZ_i'$ in~$\cZ'$ 
and restricts to~$\io_{\cC,\ve;i}$ over~$X_i$.
Under the identification of~$\cN\cZ_i$ with the vertical tangent subbundle 
of $T\cN\cZ_i|_{X_i}$, the homomorphism
\BE{Thidfn_e0}\begin{split}
&\Th_i\!:\cN\cZ_i\lra 
\cN\!\io_{\cC,\ve;i}\!\equiv\!\frac{\io_{\cC,\ve;i}^{\,*}T\cZ'}{\nd\io_{\cC,\ve;i}(TX_i)}
=\io_{\cC,\ve;i}^{\,*}\cN_{\cZ'}\cZ_i'\!\equiv\!
\io_{\cC,\ve;i}^{\,*}\frac{T\cZ'|_{\cZ_i'}}{T\cZ_i'},\\
&\hspace{1.2in} \Th_i(v)=\big[\nd_{\pi_{\cN\cZ_i}(v)}\Psi_i'(v)\big],
\end{split}\EE
is an isomorphism of rank~2 real vector bundles over~$X_i$ such that 
\BE{Psiidfn_e}\Psi_i\!\equiv\!\Psi_i'\!\circ\!\Th_i^{-1}\!:
\cN_i'\!\equiv\!\Th_i\big(\cN'\cZ_i\big)\lra \cZ'\EE
is a regularization of~$\cZ_i'$ in~$\cZ'$ in the sense of Definition~\ref{smreg_dfn}.
We will show that the complex structure~$\fI_i$
in the fibers of~$\cN\cZ_i$ is $\Th_i^*(\wt\om_{\cC}^{(\ve)}|_{\cN_{\cZ'}\cZ_i'})$-compatible
and~that
\BE{Psiio_e}
\io_{\cC,\ve;j}(\Psi_{ij;j}(v))=
\Psi_i\big(\nd_{\pi_{ij;j}(v)}\io_{\cC,\ve;j}(v)\big)\in\cZ_j'
\quad
\forall\,v\!\in\!\cN_{ij;j}(2\ve^2),\,i,j\!\in\![N],\,i\!\neq\!j.\EE

\vspace{.2in}

\noindent 
The pairs $(\Psi_i',\fI_i)$  and $(\Psi_i,\Th_{i*}\fI_i)$ 
determine complex line bundles $\cO_{\cZ^*}'(\cZ_i')$ and $\cO_{\cZ^*}(\cZ_i')$,
respectively,
over a neighborhood~$\cZ^*$ of~$\cZ_0'$ in~$\cZ'$ as in~\eref{cOXVdfn_e}.
The maps~\eref{Thidfn_e0} with $i\!\in\![N]$ induce an isomorphism
\BE{ThProd_e}\Th\!:\cO_{\cZ^*}'(\cZ_0')\!\equiv\!\bigotimes_{i=1}^N\cO_{\cZ^*}'(\cZ_i')
\lra\cO_{\cZ^*}(\cZ_0')
\!\equiv\!\bigotimes_{i=1}^N\cO_{\cZ^*}(\cZ_i')\EE
of complex line bundles over~$\cZ^*$.
By~\eref{Psiio_e}, the differentials of the maps $\io_{\cC,\ve;j}$ induce an isomorphism
as in~\eref{ndioisom_e} with~$\cZ_0$ replaced by~$\cZ_0'$.
We will describe the line bundle $\cO_{\cZ^*}'(\cZ_0')$
explicitly over trivializing open sets and construct a section~$s_{\eset}'$
and a trivialization~$\wt\Phi_{\cC,\ve}'$ of this bundle so~that
\BE{wtPhis0_e}\wt\Phi_{\cC,\ve}'\!\circ\!s_{\eset}'\!=\!\pi_{\cC,\ve}\!:
\cZ^*\lra\C\EE
and $s_{\eset}\!\equiv\!\Th\!\circ\!s_{\eset}'$ is 
the canonical section of $\cO_{\cZ^*}(\cZ_0)$.
By~\eref{wtPhis0_e}, the trivialization 
\BE{wtPhicCvedfn_e}\wt\Phi_{\cC,\ve}\!\equiv\!\wt\Phi_{\cC,\ve}'\!\circ\!\Th^{-1}\!:
\cO_{\cZ^*}(\cZ_0)\lra\C\EE
satisfies the first equality in~\eref{PhiExtPrp_e}.
We conclude by establishing the second equality in~\eref{PhiExtPrp_e}.\\

\noindent 
For all $i\!\in\!I\!\subsetneq\!I'\!\subset\![N]$,
$$\pi_{I';i}^*\cN_{X_{I'-i}}X_{I'}\big|_{\cN_{I';i}(4^N)}
=\pi_{I';i}^*\cN_{I';I'-i}\big|_{\cN_{I';i}(4^N)}
\subset \pi_{I';I}^*\cN_{I';I'-I}\big|_{\cN_{I';I}(4^N)}\subset \cN X_{I'}$$
under identifications as in the second equality in~\eref{cNXIide_e}.
Thus, the bundle homomorphism in~\eref{fDeIIpr0_e} if $|I|\!\ge\!2$ and 
in~\eref{DPsiext_e} if $|I|\!=\!1$ with~$I$ and~$I'$ switched
 restricts to a bundle homomorphism
\BE{TrivEx0_e0}
\fD\Psi_{I';I}^{(\ve)}\!: 
\pi_{I';i}^*\cN_{X_{I'-i}}X_{I'}\big|_{\cN_{I';i}(4^N)}
\lra \pi_{I;i}^*\cN_{X_{I-i}}X_I
\big|_{\cN_{I;i}(4^N)|_{\Im(\Psi_{I';I}^{(\ve)})}}\,.\EE
If $|I|\!\ge\!2$ or $I'\!=\!\{i\}$, \eref{TrivEx0_e0} is an isomorphism of Hermitian line bundles.
If $I\!=\!\{i\}$ and \hbox{$|I'|\!\ge\!2$}, 
\eref{TrivEx0_e0} restricts to an isomorphism of rank~2 vector bundles
over the subspace~\eref{diffsub_e} with $I$ and~$I'$ interchanged;
this subspace contains~$X_{I';i}$.\\

\noindent
For each $i\!\in\!I$, define
\begin{gather*}
\cN\cZ_i=\bigg(\bigsqcup_{i\in I\subset[N]}\!\!\!\!\!
\pi_{I;i}^*\cN_{X_{I-i}}X_I\big|_{X_{I;i}}\bigg)\!\!\bigg/\!\!\!\sim\,,\\
\pi_{I';i}^*\cN_{X_{I'-i}}X_{I'}\big|_{\Psi_{I';i}^{(\ve)\,-1}(X_{i;I}\cap X_{i;I'})} \ni v
\sim \fD\Psi_{I';I}^{(\ve)}(v)\in  
\pi_{I;i}^*\cN_{X_{I-i}}X_I\big|_{\Psi_{I;i}^{(\ve)\,-1}(X_{i;I}\cap X_{i;I'})}
~~\forall\,i\!\in\!I\!\subset\!I'\!\subset\![N].
\end{gather*}
By  the first statement in~\eref{Psiover_e} and~\eref{SCCcons_e4}, 
$\sim$ is an equivalence relation.
By  the first statement in~\eref{SCCepcons_e2}, the~map
\begin{gather}
\label{cNZidfn_e}
\pi_{\cN\cZ_i}\!:\cN\cZ_i\lra X_i\!=\!
\bigcup_{i\in I\subset[N]}\!\!\!\!\!\Psi_{I;i}^{(\ve)}(X_{I;i}), \\
\notag
 \pi_{\cN\cZ_i}\big([v]\big)=\Psi_{I;i}^{(\ve)}\big(\pr_{I;i}(v)\big)
\quad\forall~v\!\in\!\pi_{I;i}^*\cN_{X_{I-i}}X_I\big|_{X_{I;i}}\,,~i\!\in\!I\!\subset\![N],
\end{gather}
is well-defined and determines a smooth rank~2 vector bundle.
By Lemma~\ref{TrivExt_lmm0} below, the complex structures~$\fI_{I;i}$ with $i\!\in\!I$ 
induce a complex structure~$\fI_i$ on the vector bundle~\eref{cNZidfn_e}.\\

\noindent
For $i\!\in\!I\!\subset\![N]$, define
\begin{gather*}
\de_I\!:X_I^{\circ}\lra\R^+, \qquad
\de_I(x)=\begin{cases}2,&\hbox{if}~|I|\!\ge\!2;\\
2\cC(x)^2\wt\ve_i(x)^2,&\hbox{if}~I\!=\!\{i\};
\end{cases}\\
\cN'\cZ_{i;I}=\big\{v\!\in\!\pi_{I;i}^*\cN_{X_{I-i}}X_I|_{X_{I;i}}\!:
\rho_{I;i}(v)\!<\!\de_I(\pi_I(v))\big\}.
\end{gather*}
By~\eref{cZequiv_e} and Proposition~\ref{cZtopol_prp}, the~map 
\begin{gather}
\notag
\Psi_i'\!:\cN'\cZ_i\!\equiv\!
\bigg(\bigsqcup_{i\in I\subset[N]}\!\!\!\!\!\cN'\cZ_{i;I}\!\!\bigg)\!\!\bigg/\!\!\!\sim\,\lra
\cZ'\!=\!\bigg(\bigsqcup_{I\in\cP^*(N)}\!\!\!\!\!\!\cZ_I\bigg)\!\!\bigg/\!\!\!\sim,\\
\label{Psiiprdfn_e}
\Psi_i'\big([v]\big)=q(v)\in q(\cZ_{I;i})
\quad\forall~v\!\in\!\cN'\cZ_{i;I}\,,~i\!\in\!I\!\subset\![N],
\end{gather}
is well-defined and smooth.
By~\eref{ioZdfn_e} and the first statement in~\eref{SCCepcons_e2}, 
\BE{ioPsipr_e}
\Psi_i'\big([v]\big)=
\io_{\cC,\ve;j}\big(\Psi_{ij;j}^{(\ve)}\big(\fD\Psi_{I;ij}^{(\ve)}(v)\big)\!\big)
\quad\forall~v\!\in\!\cN'\cZ_{i;I}
\big|_{X_{I;i}\cap X_{I;j}}\,,~i,j\!\in\!I\!\subset\![N],~i\!\neq\!j.\EE

\vspace{.1in}

\noindent
For $I_0\!\subset\!I\!\subset\![N]$ with $I_0\!\neq\!\eset$, let
\begin{gather*}
\cZ_{I;I_0}^*=\big\{v\!\in\!\cZ_I\!\!:\,\rho_{I;i}(v)\!<\!\de_I(\pi_I(v))
~\forall\,i\!\in\!I_0,~
\rho_{I;i}(v)\!\neq\!0~\forall\,i\!\in\!I\!-\!I_0\big\},\\
\pi_{I;I_0}^{\cO}\!:\cO_{I;I_0}'(\cZ_0')\!=\!
\bigotimes_{i\in I_0}\pi_I^*\cN_{X_{I-i}}X_I\big|_{\cZ_{I;I_0}^*}
\lra \cZ_{I;I_0}^*\,.
\end{gather*}
If $|I|\!=\!1$, then $I_0\!=\!I$ and $\cZ_{I;I_0}^*\!=\!\cZ_I$.
If $I_0',I\!\subset\!I'\!\subset\![N]$ with $I_0'\!\neq\!\eset$, then
$$\cZ_{I';I}\!\cap\!\cZ_{I';I_0'}^*=\eset~~\hbox{if}~I_0'\!\not\subset\!I,\quad
\fD\Psi_{I';I}^{(\ve)}\big(\cZ_{I';I}\!\cap\!\cZ_{I';I_0'}^*\!\cap\!\cZ_{I';I_0}^*\big)
=\cZ_{I;I'}\!\cap\!\cZ_{I;I_0'}^*\!\cap\!\cZ_{I;I_0}^*~~\hbox{if}~I_0'\!\subset\!I.$$
In the second case, the diffeomorphism
$$\fD\Psi_{I';I}^{(\ve)}\!: \cZ_{I';I}\!\cap\!\cZ_{I';I_0'}^*\!\cap\!\cZ_{I';I_0}^*
\lra \cZ_{I;I'}\!\cap\!\cZ_{I;I_0'}^*\!\cap\!\cZ_{I;I_0}^*$$
lifts to a bundle isomorphism
\begin{gather*}
\fD\Psi_{I';I}^{(\ve)}\!:
\cO_{I';I_0'}'(\cZ_0')\big|_{\cZ_{I';I}\cap\cZ_{I';I_0'}^*\cap\cZ_{I';I_0}^*}
\lra
\cO_{I;I_0}'(\cZ_0')\big|_{\cZ_{I;I'}\cap\cZ_{I;I_0'}^*\cap\cZ_{I;I_0}^*}
\qquad\hbox{s.t.}\\
\fD\Psi_{I';I}^{(\ve)}\bigg((v_i')_{i\in I'},
\bigotimes_{i\in I_0'-I_0}\!\!\!\!v_i'\otimes\!\bigotimes_{i\in I_0'\cap I_0}\!\!\!\!w_i'\!\bigg)
=\bigg((v_i)_{i\in I},\bigotimes_{i\in I_0-I_0'}\!\!\!\!v_i \otimes\!
\bigotimes_{i\in I_0'\cap I_0}\!\!\!\!w_i\!\bigg)
~~\hbox{if}\\
\fD\Psi_{I';I}^{(\ve)}\big((v_i')_{i\in I'}\big)=(v_i)_{i\in I},~~
\fD\Psi_{I';I}^{(\ve)}
\big((v_i')_{i\in I'-I},(w_i')_{i\in I_0'\cap I_0}\big)
=(w_i)_{i\in I_0'\cap I_0}\in\cN X_I\,.
\end{gather*}

\vspace{.2in}

\noindent
The union~$\cZ^*$ of the open subsets $q(\cZ_{I;I_0}^*)\!\subset\!\cZ'$ 
is a neighborhood of $\cZ_0'$ in~$\cZ'$.
Define
\begin{gather*}
\cO_{\cZ^*}'(\cZ_0')=
\bigg(\bigsqcup_{\begin{subarray}{c}I_0\subset I\subset[N]\\ I_0\neq\eset\end{subarray}}
\!\!\!\!\!\!
\cO_{I;I_0}'(\cZ_0')\bigg)\!\!\bigg/\!\!\!\sim,\\
\cO_{I';I_0'}'(\cZ_0')\big|_{\cZ_{I';I}\cap\cZ_{I';I_0'}^*\cap\cZ_{I';I_0}^*}
\ni u\sim
\fD\Psi_{I';I}^{(\ve)}(u)\in\cO_{I;I_0}'(\cZ_0')
\big|_{\cZ_{I;I}\cap\cZ_{I;I_0'}^*\cap\cZ_{I;I_0}^*}
~~\forall\,I_0,I_0'\subset\!I\!\subset\!I'.
\end{gather*}
By~the first statement in~\eref{Psiover_e} and~\eref{SCCcons_e4},
$\sim$ above is an equivalence relation.
Furthermore, the~map
\BE{cOcZdfn_e}\pi^{\cO}_{\cZ^*}\!:\cO_{\cZ^*}'(\cZ_0')\lra\cZ^*,\qquad
\pi^{\cO}_{\cZ^*}\big([u]\big)=q\big(\pi_{I;I_0}^{\cO}(u)\big),\EE
is well-defined and determines a smooth complex line bundle.\\

\noindent
For $I\!\in\!\cP^*(N)$, let $\Phi_{\cC,\ve;I}$ be as 
in~\eref{SympSum_e27} and~\eref{PhiSimpDfn_e}.
Define a smooth map
\begin{gather}\notag
\wt\Phi_{\cC,\ve}'\!:\cO_{\cZ^*}'(\cZ_0')\lra \C,\\
\label{wtPhidfn_e}
\wt\Phi_{\cC,\ve}'\Big(\big[(v_i)_{i\in I},\bigotimes_{i\in I_0}w_i\big]\Big)
=\Phi_{\cC,\ve;I}\big((v_i)_{i\in I-I_0},(w_i)_{i\in I_0}\big)
~~\forall\,\big((v_i)_{i\in I},\bigotimes_{i\in I_0}\!w_i\big)
\!\in\!\cO_{I;I_0}'(\cZ_0')\,.
\end{gather}
By the first statement in~\eref{Psiover_e} and~\eref{PhiPsi_e2},
this map is well-defined.
The restriction of~$\wt\Phi_{\cC,\ve}'$ to each fiber of \eref{cOcZdfn_e} 
is an isomorphism because
$$v_i\neq0 \qquad\forall~\big((v_i)_{i\in I},\bigotimes_{i\in I_0}\!w_i\big)
\!\in\!\cO_{I;I_0}'(\cZ_0'),\,i\!\in\!I\!-\!I_0,$$
and \eref{PhiIisom_e} is an isomorphism of complex line bundles over
$\ri{X}_I\!\subset\!X_I^{\star}$.
Define a smooth section of~\eref{cOcZdfn_e} by
\BE{wtPhis0_e0}s_{\eset}'\big(q\big((v_i)_{i\in I}\big)\!\big)=
\big[(v_i)_{i\in I},\bigotimes_{i\in I_0}\!v_i\big]\in 
\cO_{I;I_0}'(\cZ_0')
\qquad\forall~(v_i)_{i\in I}\in\cZ_{I;I_0}^*,~I_0\!\subset\!I\!\subset\![N],~I_0\!\neq\!\eset.\EE
By~\eref{wtPhidfn_e} and~\eref{piRdfn_e}, the section~$s_{\eset}'$
and the trivialization~$\wt\Phi_{\cC,\ve}'$ of~\eref{cOcZdfn_e} satisfy~\eref{wtPhis0_e}.\\

\noindent
For $i\!\in\!I\!\subset\![N]$, let
\BE{wtThIidfn_e}\wt\Th_{I;i}\!:\pi_{I;i}^*\cN_{X_{I-i}}X_I\big|_{X_{I;i}}\lra 
\pi_{I;i}^*\cN X_I\big|_{X_{I;i}}\lra  T\cZ_I\big|_{X_{I;i}}\EE
denote the inclusion of $\pi_{I;i}^*\cN_{X_{I-i}}X_I$ as a subbundle of 
the restriction of the vertical tangent bundle of the total space
of the fibration $\pi_I\!:\cN X_I\!\lra\!X_I$.
Since the maps~\eref{fDeIIpr0_e} and~\eref{DPsiext_e} are isomorphisms of split vector bundles,
\begin{gather}\label{wtThIidfn_e2}
\wt\Th_{I;i}\big(\fD\Psi_{I';I}^{(\ve)}(v)\big)
=\nd_{\Psi_{I';I}^{(\ve)}(\pi_{I';i}(v))}\fD\Psi_{I';I}^{(\ve)}\big(\wt\Th_{I';i}(v)\big)\\
\notag
\forall~v\!\in\!\pi_{I';i}^*\cN_{X_{I'-i}}X_{I'}
\big|_{\Psi_{I';i}^{(\ve)\,-1}(X_{i;I}\cap X_{i;I'})},~
i\!\in\!I\!\subset\!I'\!\subset\![N].
\end{gather}
By the first statement of Lemma~\ref{TrivExt_lmm1} below, the composition
\BE{TrivExt1_e0}\Th_{I;i}\!:\pi_{I;i}^*\cN_{X_{I-i}}X_I\big|_{X_{I;i}}\lra 
T\cZ_I\big|_{X_{I;i}}\lra  
\frac{T\cZ_I|_{X_{I;i}}}{TX_{I;i}}\!=\!\cN_{\cZ_I}X_{I;i}\EE
of $\wt\Th_{I;i}$ with the quotient projection
is an isomorphism of vector bundles over~$X_{I;i}$.
By~\eref{wtThIidfn_e2}, the isomorphisms~\eref{TrivExt1_e0} induce an isomorphism
\begin{gather}
\notag
\Th_i\!:\cN\cZ_i\lra \cN\!\io_{\cC,\ve;i}\!\equiv\!
\frac{\io_{\cC,\ve;i}^{\,*}T\cZ'}{\nd\io_{\cC,\ve;I}(TX_i)}\,,\\
\label{wtThidfn_e}
\Th_i\big([v]\big)=\big[\Psi_{I;i}^{(\ve)}\big(\pr_{I;i}(v)\big),
\nd_{\pr_{I;i}(v)}q\big(\wt\Th_{I;i}(v)\big)\big]
\quad\forall~v\!\in\!\pi_{I;i}^*\cN_{X_{I-i}}X_I\big|_{X_{I;i}}\,,~i\!\in\!I\!\subset\![N],
\end{gather}
of rank~2 real vector bundles over~$X_i$.
By~\eref{Psiiprdfn_e}, this isomorphism is described by the second line in~\eref{Thidfn_e0}
and thus the map~$\Psi_i$ in~\eref{Psiidfn_e} is a regularization of~$\cZ_i'$ in~$\cZ'$.
By the statement of Lemma~\ref{TrivExt_lmm1} below, the complex structure~$\fI_i$
on the fibers of~$\cN\cZ_i$
is $\Th_i^*(\wt\om_{\cC}^{(\ve)}|_{\cN_{\cZ'}\cZ_i'})$-compatible.\\

\noindent
The complex line bundles $\cO_{\cZ^*}(\cZ_i')$  over~$\cZ^*$ as in~\eref{cOXVdfn_e}
obtained from the identifications~$\Psi_i$ and the complex structures $\Th_{i*}\fI_i$
are given~by
\BE{cOZstardfn_e}\begin{split}
&\quad
\cO_{\cZ^*}(\cZ_i')= \big(\Psi_i^{-1\,*}\pi_{\cN_{\cZ'}\cZ_i'}^*\cN_{\cZ'}\cZ_i'
|_{\Psi_i'(\cN'_i)} \sqcup 
(\cZ^*\!-\!\cZ_i')\!\times\!\C\big)\big/\!\!\sim\,\lra \cZ^*,\\
&\Psi_i^{-1\,*}\pi_{\cN_{\cZ'}\cZ_i'}^*\cN_{\cZ'}\cZ_i'
|_{\Psi_i(\cN'_i)} 
  \ni\big(\Psi_i(v),v,cv\big)\sim\big(\Psi_i(v),c\big)\in \big(\cZ^*\!-\!\cZ_i'\big)\!\times\!\C.
\end{split}\EE
The isomorphisms~$\Th_i$ with $i\!\in\![N]$ induce an isomorphism~$\Th$
as in~\eref{ThProd_e}.
By~\eref{wtPhis0_e0} and~\eref{wtThidfn_e}, $s_{\eset}\!\equiv\!\Th\!\circ\!s_{\eset}'$
is the canonical section of $\cO_{\cZ^*}(\cZ_0')$.
We define a trivialization~$\Phi_{\cC,\ve}$ of $\cO_{\cZ^*}(\cZ_0')$ by~\eref{wtPhicCvedfn_e}.\\

\noindent
Let $i\!\in\![N]$. 
By~\eref{SympSum_e17} and the last condition in Definition~\ref{smreg_dfn}, 
$$\big[\nd_{\pi_{ij}(v)}\Psi_{ij;j}^{(\ve)}(v)]=\ve\big(\pi_{ij}(v)\big)v
\in\cN_{X_j}X_{ij}\!=\!\frac{TX_j|_{X_{ij}}}{TX_{ij}}
\quad\forall\,[v]\!\in\!\cN_{X_j}X_{ij}\,.$$
By~\eref{ioZdfn_e} and the first statement in~\eref{SCCepcons_e2}, 
$$\io_{\cC,\ve;j}\big(\Psi_{ij;j}^{(\ve)}\big(\fD\Psi_{I;ij}^{(\ve)}(v)\big)\!\big)=q(v)
\in\cZ_j' \quad\forall~v\!\in\!\cN'\cZ_{i;I}
\big|_{X_{I;i}\cap X_{I;j}}\,,~i,j\!\in\!I\!\subset\![N],~i\!\neq\!j.$$
Combining the last two statements with~\eref{wtThidfn_e}, we obtain
\begin{gather}\label{Thindio_e}
\Th_i\big([v]\big)=
\ve\big(\pi_{ij}\big(\fD\Psi_{I;ij}^{(\ve)}(v)\!\big)\!\big)\,
\nd_{\pi_{ij}(\fD\Psi_{I;ij}^{(\ve)}(v))}
\io_{\cC,\ve;j}\big(\fD\Psi_{I;ij}^{(\ve)}(v)\big)\in \cN_{\cZ'}\cZ_i'\\
\notag
\forall~
v\!\in\!\pi_{I;ij}^*\cN_{X_{I-i}}X_I\big|_{X_{I;i}\cap X_{I;j}}\,,
~i,j\!\in\!I\!\subset\![N],~i\!\neq\!j.
\end{gather}
Along with~\eref{Psiidfn_e}, \eref{ioPsipr_e}, and~\eref{SympSum_e17}, 
this implies~\eref{Psiio_e}.\\

\noindent
Let $j\!\in\![N]\!-\!\{i\}$ and $\cZ_{ij}'\!=\!\io_{\cC,\ve}(X_{ij})$.
By~\eref{cOZstardfn_e} and~\eref{Psiio_e},
\begin{gather*}
\io_{\cC,\ve}^*\cO_{\cZ^*}(\cZ_i')\big|_{X_j}
= \big(\Psi_{ij;j}^{-1\,*}\{\nd_{\pi_{ij;j}(v)}\io_{\cC,\ve;j}\}^*
\pi_{\cN_{\cZ'}\cZ_i'}^*\cN_{\cZ'}\cZ_i'
|_{\Psi_{ij;j}(\cN_{ij;j}(2\ve^2))} \sqcup 
(X_j\!-\!X_{ij})\!\times\!\C\big)\big/\!\!\sim,\\
\big(\Psi_{ij;j}(v),v,\nd\io_{\cC,\ve;j}(v),c\,\nd\io_{\cC,\ve;j}(v)\big)
\sim\big(\Psi_{ij;j}(v),c\big).
\end{gather*}
The line bundle $\cO_{X_j}(X_{ij})$ constructed using the identification 
$\Psi_{ij;j}|_{\cN_{ij;j}(2\ve^2)}$ is given~by
\begin{gather*}
\cO_{X_j}(X_{ij})= \big(\Psi_{ij;j}^{-1\,*}\pi_{ij;j}^*\cN_{ij;j}
|_{\Psi_{ij;j}(\cN_{ij;j}(2\ve^2))} \sqcup 
(X_j\!-\!X_{ij})\!\times\!\C\big)\big/\!\!\sim,\\
\big(\Psi_{ij;j}(v),v,cv\big)\sim\big(\Psi_{ij;j}(v),c\big).
\end{gather*}
Thus, the isomorphism
$$\nd\io_{\cC,\ve;j}\!:\cN_{ij;j}\!=\!\cN_{X_j}{X_{ij}}\lra 
\frac{T\cZ_j'|_{\cZ_{ij}'}}{T\cZ_{ij}'}=\frac{T\cZ'|_{\cZ_{ij}'}}{T\cZ_i'|_{\cZ_{ij}'}}
=\cN_{\cZ'}\cZ_i'$$
induces an isomorphism from $\cO_{X_j}(X_{ij})$ to 
$\io_{\cC,\ve}^*\cO_{\cZ^*}(\cZ_i')|_{X_j}$.
If $k\!\in\![N]\!-\!\{i\}$, then
$$\nd\io_{\cC,\ve;j}|_{X_{jk}}\!=\!\nd\io_{\cC,\ve;k}|_{X_{jk}}\!:
\cO_{X_{jk}}(X_{ijk})\!=\!
\cO_{X_j}(X_{ij})|_{X_{jk}}\!=\!\cO_{X_k}(X_{ik})|_{X_{jk}}\lra 
\io_{\cC,\ve}^*\cO_{\cZ^*}(\cZ_i')|_{X_{ij}}\,.$$
Thus, the differentials of the maps $\io_{\cC,\ve;j}$ induce an isomorphism
as in~\eref{ndioisom_e} with~$\cZ_0$ replaced by~$\cZ_0'$.\\

\vspace{.1in}

\noindent 
Let $i,j\!\in\![N]$ be distinct.
An element of $\cO_{X_{\prt}}(X_{\eset})|_x$ with $x\!\in\!X_{ij}$ is represented 
by a tensor product
\begin{gather*}
w_i'\!\otimes\!w_j'\!\otimes\!\bigotimes_{k\in I_0'}(x,v_k',w_k') \qquad\hbox{with}\\ 
I_0'\!\subset\![N]\!-\!\{i,j\},~
v_k'\!\in\!\cN_{ijk;ij}(2\ve^2),\,x\!=\!\Psi_{ijk;ij}(v_k'),~
(v_k',w_k')\!\in\!\pi_{ijk;ij}^*\cN_{ijk;ij}\,.
\end{gather*}
By~\eref{Psiover0_e} and the first statement in~\eref{SCCcons_e2},
such an element can be written~as
\begin{gather*} 
\big(x\!=\!\Psi_{I;ij}(v)\!=\!\Psi_{I;ij}^{(\ve)}(\ve(v)^{-1}v),v,
\bigotimes_{k\in I_0}\!w_k\big) 
\qquad\hbox{with}\\
I_0=\{i,j\}\!\cup\!I_0'\!\subset\!I,~
v\!\equiv\!(v_l)_{l\in I}
\!\in\!m_{\ve;I}(\cZ_{I;I_0}^*)\!\cap\!\cN_{I;ij},~(v,w_k)\!\in\!\pi_I^*\cN_{X_{I-k}}X_I,~\\
w_k'=\fD\Psi_{I;ijk}\big((v_l)_{l\in I-\{i,j,k\}},w_k\big)
=\fD\Psi_{I;ijk}^{(\ve)}\big((\ve(v)^{-1}v_l)_{l\in I-\{i,j,k\}},w_k\big)~\forall\,k\!\in\!I_0.
\end{gather*}
With these identifications, the condition~\eref{Phicompdfn_e} becomes 
\begin{gather}\label{Phicompdfn_e5}
\Phi\Big(\Psi_{I;ij}(v),v,\bigotimes_{k\in I_0}\!w_k\Big)
=\Phi\big(\Pi_{\fR;I}\big((v_k)_{k\in I-I_0},(w_k)_{k\in I_0}\big)\!\big)\\
\notag
\forall~\big(v,\bigotimes_{k\in I_0}\!w_k\big)
\in\cO_{I;I_0}'(\cZ_0')\big|_{m_{\ve;I}(\cZ_{I;I_0}^*)\cap\cN_{I;ij}},
~i,j\!\in\!I_0\!\subset\!I\!\subset\![N].
\end{gather}

\vspace{.1in}

\noindent
By~\eref{Thindio_e}, \eref{SympSum_e17}, 
and the second condition in~\eref{SympSum_e15}, 
the isomorphism~\eref{ndioisom_e} satisfies
\begin{gather*}
\nd\io_{\cC,\ve}|_{X_{ij}}\Big(
\big[\Psi_{I;ij}(v),v,\bigotimes_{k\in I_0}\!w_k\big]\Big)
=\Th\Big(\big[\ve(v)^{-1}v,\bigotimes_{k\in I_0}\!\big(\ve(v)^{-1}w_k\big)\big]\Big)\in
\cO_{\cZ^*}(\cZ_i')\\
\forall~\big(v,\bigotimes_{k\in I_0}\!w_k\big)
\in\cO_{I;I_0}'(\cZ_0')\big|_{m_{\ve;I}(\cZ_{I;I_0}^*)\cap\cN_{I;ij}},
~i,j\!\in\!I_0\!\subset\!I\!\subset\![N].
\end{gather*}
Along with~\eref{wtPhicCvedfn_e}, \eref{wtPhidfn_e}, and~\eref{SympSum_e27}, 
this implies that 
\begin{gather*}
\wt\Phi_{\cC,\ve}\Big(\nd\io_{\cC,\ve}|_{X_{ij}}\!\Big(
\big[\Phi_{I;ij}(v),v,\bigotimes_{k\in I_0}\!w_k\big]\!\Big)\!\!\Big)
=\cC\big(\pi_I(v)\big)\ve\big(\pi_I(v)\big)^{-1}
\Phi\big(\Pi_{\fR;I}\big((v_k)_{k\in I-I_0},(w_k)_{k\in I_0}\big)\!\big)\\
\forall~\big(v,\bigotimes_{k\in I_0}\!w_k\big)
\!\equiv\!\big((v_k)_{k\in I},\bigotimes_{k\in I_0}\!w_k\big)
\in\cO_{I;I_0}'(\cZ_0')\big|_{m_{\ve;I}(\cZ_{I;I_0}^*)\cap\cN_{I;ij}},
~i,j\!\in\!I_0\!\subset\!I\!\subset\![N].
\end{gather*}
Combining this with~\eref{Phicompdfn_e5}, we obtain the second equality in~\eref{PhiExtPrp_e}.\\

\noindent
It remains to establish the two lemmas used above.

\begin{lmm}\label{TrivExt_lmm0}
For all $i\!\in\!I\!\subsetneq\!I'\!\subset\![N]$, the bundle homomorphism~\eref{TrivEx0_e0}
is $\C$-linear with respect to the complex structures
$\pi_{I';i}^{\,*}\fI_{I';i}$ and $\pi_{I;i}^{\,*}\fI_{I;i}$.
\end{lmm}

\begin{proof}
If $|I|\!\ge\!2$, then
the homomorphism~\eref{TrivEx0_e0} is an isomorphism intertwining
the Hermitian structures $\pi_{I';i}^{\,*}(\rho_{I';i},\al_{I';i})$ and 
$\pi_{I;i}^{\,*}(\rho_{I;i},\al_{I;i})$.
If $I'\!=\!\{i\}$, \eref{TrivEx0_e0} is the identity.
If $I\!=\!\{i\}$ and $|I'|\!\ge\!2$, the homomorphism~\eref{TrivEx0_e0} is given~by
$$\fD\Psi_{I';I}^{(\ve)}\big((v_j)_{j\in I'-i},v_i\big)
=\cC\big(\pi_I\!\big(\!(v_j)_{j\in I'}\!\big)\!\big)
\ve\big(\pi_I\!\big((v_j)_{j\in I'-i})\!\big)^{|I|-1}
\Phi\big(\Pi_{\fR;I}((v_j)_{j\in I'})\!\big)\,.$$
Since $\Phi$ is a $\C$-linear isomorphism and $\Pi_{\fR;I}$ 
is a $\C$-linear homomorphism in the~$v_i$-input,
$\fD\Psi_{I';I}^{(\ve)}$ is a $\C$-linear homomorphism in this case as~well.
\end{proof}

\begin{lmm}\label{TrivExt_lmm1}
For all $i\!\in\!I\!\subset\![N]$, the homomorphism~\eref{TrivExt1_e0} 
is an isomorphism of vector bundles over~$X_{I;i}$.
Furthermore, the restriction of $\pi_{I;i}^*\fI_{I;i}$ to 
$\pi_{I;i}^*\cN_{X_{I-i}}X_I|_{X_{I;i}}$ is a 
$\Th_{I;i}^{\,*}(\wt\om_{\cC;I}^{(\ve)}|_{\cN_{\cZ_I}X_{I;i}})$-compatible 
complex structure.
\end{lmm}

\begin{proof} 
The first claim follows from the canonical decomposition 
\BE{TrivExt1_e1}T\big(\cN X_I\big)\big|_{\cN_{I;i}}=
T\cN_{I;i}\oplus\pi_{I;i}^{\,*}\cN_{X_{I-i}}X_I\EE
of vector bundles over~$\cN_{I;i}$.
By~\eref{wtomdfn_e} and~\eref{wtomdfn_e0}, 
\begin{equation*}\begin{split}
\wt\om_{\cC;I}^{(\ve)}|_{T\cZ_I|_{X_{I;i}}}
=\Bigg(\!\!\pi_{I;i}^*\om_I&+\frac12
\nd\!\!\!\!\!\!\sum_{j\in I-\{i\}}\!\!\!\!\!\big(\ve^2\rho_{I;j}\al_{I;j}\big)\\
&+\frac12
\nd\bigg(\!\bigg(1-\!\!\prod_{j\in I-\{i\}}\!\!\!\!\!\!\eta_{I;j}\!\!\bigg)
\ve^2\rho_{I;i}\al_{I;i}
+\bigg(
\prod_{j\in I-\{i\}}\!\!\!\!\!\!\eta_{I;j}\!\!\bigg)\ve^2\Phi_{\cC,\ve;I}^*\big(r^2\nd\th\big)
\!\!\bigg)\!\!\Bigg)\!\bigg|_{T\cZ_I|_{X_{I;i}}}\,.
\end{split}\end{equation*}
Since $\rho_{I;i}$ vanishes on~$\cN_{I;i}$ and $\nd\rho_{I;i}$ vanishes on $T(\cN X_I)|_{\cN_{I;i}}$,
Lemma~\ref{Phipull_lmm} implies that there is a smooth $\R^+$-valued 
function~$f_{\cC,\ve;I}$
on~$X_I^{\star}$ such~that 
\BE{TrivExt1_e5}\begin{split}
\wt\om_{\cC;I}^{(\ve)}|_{T\cZ_I|_{X_{I;i}}}
&=\Bigg(\!\pr_{I;I-i}^*\big(\wh\om_I^{(\ve)}|_{\cN_{I;i}}\big)\\
&\hspace{.5in}+\frac{\ve^2}{2}
\bigg(\!\!\bigg(1-\!\!\prod_{j\in I-\{i\}}\!\!\!\!\!\!\eta_{I;j}\!\!\bigg)
+(f_{\cC,\ve;I}\!\circ\!\pi_I)\!\!\!\!\!\!\prod_{j\in I-\{i\}}\!\!\!\!\!\!\eta_{I;j}\rho_{I;j}
\!\!\bigg)\nd\big(\rho_{I;i}\al_{I;i}\big)
\!\!\Bigg)\!\bigg|_{T\cZ_I|_{X_{I;i}}}
\end{split}\EE
if $|I|\!=\!2$.
The same conclusion holds if $I\!=\!\{i\}$.\\

\noindent
By~\eref{TrivExt1_e5}, the image of the inclusion~\eref{wtThIidfn_e}
is the $\wt\om_{\cC;I}^{(\ve)}$-orthogonal
complement of $TX_{I;i}$ in~$T\cZ_I|_{X_{I;i}}$. Thus,
\BE{TrivExt1_e9}\begin{split}
\Th_{I;i}^{\,*}(\wt\om_{\cC;I}^{(\ve)}|_{\cN_{\cZ_I}X_{I;i}})
&=\wt\Th_{I;i}^{\,*}\wt\om_{\cC;I}^{(\ve)}\\
&=\frac{\ve^2}{2}
\bigg(\!\!\bigg(1-\!\!\prod_{j\in I-\{i\}}\!\!\!\!\!\!\eta_{I;j}\!\!\bigg)
+(f_{\cC,\ve;I}\!\circ\!\pi_I)\!\!\!\!\!\!\prod_{j\in I-\{i\}}\!\!\!\!\!\!\eta_{I;j}\rho_{I;j}
\!\!\bigg)\pi_{I;i}^*\nd\big(\rho_{I;i}\al_{I;i}\big)
\end{split}\EE
under the canonical identification of $\cN_{X_{I-i}}X_I$ with the vertical 
tangent bundle of  $\cN_{X_{I-i}}X_{I}\!\lra\!X_I$ along~$X_I$.
By Definition~\ref{sympreg1_dfn}\ref{sympreg1_it} and~\eref{ombund_e},
the complex structure~$\fI_{I;i}$ is compatible with
\BE{omirestr_e}\om_i\big|_{\cN_{X_{I-i}X_I}}=
\frac12\nd\big(\rho_{I;i}\al_{I;i}\big)\big|_{\cN_{X_{I-i}}X_I}\EE
under the identification of $\cN_{X_{I-i}}X_I$ with the vertical tangent
bundle of~$T\cN_{X_{I-i}}X_I$ along~$X_I$ if \hbox{$|I|\!\ge\!2$}.
If $|I|\!=\!1$, the complex structure~$\fI_{I;i}$ is also compatible with~\eref{omirestr_e}.
Along with~\eref{TrivExt1_e9}, this implies the second claim.
\end{proof}

\appendix

\section{Connections in vector bundles}
\label{conn_app}

\noindent
This appendix contains a number of explicit computations involving connections
in complex line bundles.
All of these computations are straightforward;
we include them for the sake of completeness.\\

\noindent
Let $\pi\!:L\!\lra\!X$ be a complex line bundle and $\ze_L$ be 
the \sf{radial vector field} on the total space of~$L$,~i.e.
$$\ze_L(v)=(v,v)\in\pi^*L= TL^{\ver}\!\equiv\!\ker\nd\pi \lhra TL \qquad\forall\,v\!\in\!L.$$
A connection~$\na$ in~$L$ and determines a \sf{horizontal tangent subbundle} 
\hbox{$TL^{\hor}\!\subset\!TL$} and a splitting of the exact sequence
$$0\lra \pi^*L\lra TL\stackrel{\nd\pi}\lra\pi^*TX\lra0$$
of vector bundles over $L$, i.e.~a splitting
\BE{Lsplit_e}TL\approx TL^{\ver}\oplus TL^{\hor}\approx\pi^*L\oplus \pi^*TX\EE
such that the second isomorphism above is $(\id,\nd\pi)$;
see \cite[Lemma~1.1]{anal}.\\

\noindent
If $U\!\subset\!X$ is an open subset, a non-vanishing section $\xi\!\in\!\Ga(U;L)$ 
induces a trivialization $L|_U\!\approx\!U\!\times\!\C$ and 
\BE{naka_e}\na\xi=\ka_{\xi}\xi\EE
for some $\C$-valued 1-form $\ka_{\xi}\!\in\!\Ga(U;T^*X\!\otimes_{\R}\!\C)$.
If $\ze\!\in\!\Ga(U;L)$ is another non-vanishing section, then $\ze\!=\!f\xi$
for some $f\!\in\!C^{\i}(U;\C^*)$~and 
\BE{thch_e}\ka_{\ze}=\ka_{\xi}+f^{-1}\nd f\,.\EE
With respect to the trivialization of~$L|_U$ determined by~$\xi$,
$$T_{(x,v)}L^{\hor}=
\big\{(\dot{x},-\ka_{\xi}(\dot{x})v)\!:~\dot{x}\!\in\!T_xX\big\}
\subset T_xX\!\oplus\!\C
\qquad\forall~(x,v)\!\in\!U\!\times\!\C\,.$$
This is consistent with~\eref{thch_e} and shows that the $\C^*$-action on~$L$ preserves 
the splitting~\eref{Lsplit_e}.\\

\noindent
If $h\!:Y\!\lra\!X$ is a smooth map, a connection~$\na$ in~$L$ induces 
a connection~$\na^h$ in the line bundle $h^*L\!\lra\!Y$.
If $U\!\subset\!X$ is an open subset, $\xi$ is a non-vanishing section of~$L|_U$, 
and $\ka_{\xi}$ is as in~\eref{naka_e}, then 
$h^*\xi\!=\!\xi\!\circ\!h$ is a non-vanishing section of~$h^*L$
over the open subset $h^{-1}(U)\!\subset\!Y$ and
$$\na^h(h^*\xi)=\ka_{h^*\xi}(h^*\xi), \qquad\hbox{where}\quad
\ka_{h^*\xi}=h^*\ka_{\xi}=\ka_{\xi}\circ\nd h\,.$$

\vspace{.1in}

\noindent
Let $\xi\!\in\!\Ga(U;L)$ be as in~\eref{naka_e}.
Denote by $\cK$  the $\C$-valued 1-form on $L\!-\!X$ given in the corresponding 
trivialization of~$L$ by
\BE{aldfn_e}
\cK_{(x,v)}\big(\dot{x},\dot{v}\big)=-\fI\big(\ka_{\xi}(\dot{x})\!+\!v^{-1}\dot{v}\big)
\qquad\forall~(\dot{x},\dot{v})\!\in\!T_{(x,v)}(U\!\times\!\C),~(x,v)\!\in\!U\!\times\!\C^*.\EE
By~\eref{thch_e}, $\cK$ is well-defined (i.e.~independent of the choice of~$\xi$).
It is preserved by the $\C^*$-action and satisfies
\BE{cKprp_e}
\cK\big|_{TL^{\hor}|_{L-X}}=0, \quad
\cK\big(\ze_L(v)\big)=-\fI, ~~
\cK\bigg(\frac{\nd}{\nd\th}\ne^{\fI\th}v\Big|_{\th=0}\bigg)
=\cK\big(\fI\ze_L(v)\big)=1 ~~~\forall\,v\!\in\!L\!-\!X.\EE
The choice of the normalization for~$\cK$ is motivated by the last property above and
by~\eref{alcK_e} below.\\

\noindent
Given a Hermitian metric~$\rho$ on~$L$, let $SL\!\subset\!L$ 
denote the unit circle bundle.
If $\na$ is $\rho$-compatible, $\xi$ is a trivialization of~$L$ over an open subset
$U\!\subset\!X$, $|\xi|\!=\!1$, and $\ka_{\xi}$ is as in~\eref{naka_e}, then
\begin{gather}\notag
T_{(x,v)}(SL)=\big\{(\dot{x},cv)\!:
\dot{x}\!\in\!T_xX,\,c\!\in\!\fI \R\big\}
\subset T_xX\!\oplus\!\C \qquad\forall~(x,v)\!\in\!U\!\times\!S^1,\\
\label{kaIm_e}
\ka_{\xi}(\dot{x})+\ov{\ka_{\xi}(\dot{x})}=0~~\forall\,\dot{x}\!\in\!TU\,.
\end{gather}
Thus, $\ka_{\xi}$ take values in~$\fI\R$ and
the splitting~\eref{Lsplit_e} restricts to a splitting
\begin{gather*}
T(SL)\approx T(SL)^{\ver}\oplus T(SL)^{\hor}, 
\qquad\hbox{where}\\
\notag
T(SL)^{\ver}=\ker\nd\big\{\pi|_{SL}\big\}=TL^{\ver}\cap T(SL)\,, 
\qquad T(SL)^{\hor}=TL^{\hor}\big|_{SL}\,.
\end{gather*}
There is a unique $\R$-valued \sf{connection 1-form}~$\al$ on~$SL$ such~that 
$$\ker\al=T(SL)^{\hor}\,, \qquad 
\al\bigg(\frac{\nd}{\nd\th}\ne^{\fI\th}v\Big|_{\th=0}\bigg)=1~~~\forall\,v\!\in\!SL.$$
By the first and last statements in~\eref{cKprp_e},
it is given by the restriction of~$\cK$ to the tangent bundle of~$SL$.
Via the retraction 
$$\ri{L}\equiv L\!-\!V\lra SL, \qquad v\lra\frac{v}{|v|}\,,$$
the 1-form~$\al$ extends to a 1-form on $\ri{L}$; 
we denote the resulting extension by~$\al$ as~well.
By~\eref{cKprp_e},
\BE{alcK_e}\al=\Re\,\cK\,.\EE
As in the main part of the paper,
we will use the same notation~$\rho$ to denote the square of the norm function on~$L$
and the Hermitian form on~$L$.\\

\noindent
A connection $\na$ in a vector bundle \hbox{$\pi\!:L\!\lra\!X$} determines
an extension~$\Om_{\na}$ of a fiberwise 2-form~$\Om$
to a 2-form on the total space of~$L$.
If $\Om$ is a fiberwise symplectic form on an oriented vector bundle \hbox{$\pi\!:L\!\lra\!X$} 
of rank~2 and
$(\rho,\na)$ is an $\Om$-compatible Hermitian structure on~$L$,
then 
\BE{Omnaprp_e}\big\{\io_{\ze_L}\Om_{\na}\big\}(\fI\ze_L)=\Om(\ze_L,\fI\ze_L)
=\rho^{\R}(\ze_L,\ze_L)=\rho(\ze_L,\ze_L).\EE
Since $\io_{\ze_L}\Om_{\na}$ vanishes on~$TL^{\hor}$ and on~$\ze_L$,
\eref{alcK_e}, \eref{cKprp_e}, and~\eref{Omnaprp_e} give
\BE{Omvsaldfn_e} \rho\al=\io_{\ze_L}\Om_{\na}\,.\EE

\begin{lmm}\label{conn_lmm0}
Suppose $\pi\!:L\!\lra\!X$ is a complex line bundle, 
$(\rho,\na)$ is a Hermitian structure in~$L$, and
$\al$ is the connection 1-form on $\ri{L}$ determined by~$(\rho,\na)$.
\begin{enumerate}[label=(\arabic*),leftmargin=*]

\item\label{conn0_it1} If $(\rho',\na')$ is another Hermitian structure in~$L$,
there exist $f\!\in\!C^{\i}(X;\R^+)$ and  an $\R$-valued 1-form
$\mu_{\fI\R}$ on~$X$ such~that 
\BE{conncomp_e}  \rho'=f^{-2}\rho, \qquad \na'=\na-f^{-1}\nd f+\fI\mu_{\fI\R}\,.\EE 

\item\label{conn0_it2} If $\rho'$ and $\na'$ are given by~\eref{conncomp_e}, then 
$(\rho',\na')$ is a Hermitian structure in~$L$ and 
the connection 1-form on $\ri{L}$ determined by~$(\rho',\na')$ is given~by
\BE{conncomp_e2}\al'=\al+\pi^*\mu_{\fI\R}\,.\EE
\end{enumerate}
\end{lmm}

\begin{proof}
Since $L$ is a complex line bundle, the first half of~\ref{conn0_it1} is clear.
If~$\na'$ is another connection in~$L$, then
$$\na'=\na+\mu$$
for some $\C$-valued 1-form on~$X$.
Along with the compatibility condition on~$(\rho,\na)$ and~$(\rho',\na')$,
this implies the second half of~\ref{conn0_it1} and the 
first half of~\ref{conn0_it2}.
If  $\rho'$ and $\na'$ are given by~\eref{conncomp_e} and
$\xi\!\in\!\Ga(U;L)$ is a local section with $\rho(\xi)\!=\!1$,
then $\xi'\!\equiv\!f\xi$ is a local section with $\rho'(\xi')\!=\!1$ and
$$\ka_{\xi'}'=\big(\ka_{\xi}\!-\!f^{-1}\nd f\!+\!\fI\mu_{\fI\R}\big)+f^{-1}\nd f
=\ka_{\xi}\!+\!\fI\mu_{\fI\R}\,.$$
Along with~\eref{alcK_e} and~\eref{aldfn_e}, this implies the second half of~\ref{conn0_it2}.
\end{proof}

\noindent
If $N\!\in\!\Z^+$ and $L_i\!\lra\!X$ is a complex line bundle for each 
$i\!=\!1,\ldots,N$,
define
$$\Pi\!:\bigoplus_{i=1}^N L_i\lra  \bigotimes_{i=1}^N L_i
\qquad\hbox{by}\quad
(v_1,\ldots,v_N)\lra v_1\!\otimes\!\ldots\!\otimes\!v_N\,.$$

\begin{lmm}\label{conn_lmm}
Let $N\!\in\!\Z^+$ and $X$ be a manifold.
For each $i\!\in\![N]$, let $(L_i,\rho_i,\na^{(i)})$ be a Hermitian line
bundle over~$X$ with induced connection 1-form~$\al_i$
on~$\ri{L}_i$.
Suppose
$$(\pi,\Phi)\!:
\bigotimes_{i=1}^N(L_i,\rho_i,\na^{(i)})\lra 
(X\!\times\!\C,\rho_{\C},\na^{\C})$$
is an isomorphism with the trivial Hermitian line bundle, 
$$\pi_i\!: \ri{L}_1\!\times_X\!\ldots\!\times_X\!\ri{L}_N\lra \ri{L}_i$$
is the projection onto the $i$-th factor, and $\wt\Phi\!=\!\Phi\!\circ\!\Pi$. 
Then,
$$\sum_{i=1}^N\pi_i^*\al_i=\wt\Phi^*\nd\th \in 
\Ga\big(\ri{L}_1\!\times_X\!\ldots\!\times_X\!\ri{L}_N;
T^*(\ri{L}_1\!\times_X\!\ldots\!\times_X\!\ri{L}_N)\big)\,.$$
\end{lmm}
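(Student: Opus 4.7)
Since the claimed identity is a pointwise equality of $\R$-valued 1-forms on
$\ri{L}_1\!\times_X\!\ldots\!\times_X\!\ri{L}_N$, the plan is to work locally on~$X$.
Fix an open $U\!\subset\!X$ together with unit sections $\xi_i\!\in\!\Ga(U;L_i)$,
$\rho_i(\xi_i)\!=\!1$, and use the induced trivializations $L_i|_U\!\approx\!U\!\times\!\C$,
with polar coordinates $v_i\!=\!r_i\ne^{\fI\th_i}$ on the fiber~$\C^*$.  It suffices to
check the identity in these coordinates at an arbitrary point, so the main content is
algebraic bookkeeping of the formulas already derived earlier in this appendix.

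First I would read off the local formula for~$\al_i$. Writing $\na^{(i)}\xi_i\!=\!\ka_i\xi_i$,
the $\rho_i$-compatibility condition~\eref{kaIm_e} forces $\ka_i$ to take values in~$\fI\R$.
Combining~\eref{aldfn_e} with~\eref{alcK_e} then gives
\begin{equation*}
\al_i\big|_{(x,v_i)}=\nd\th_i-\fI\ka_i(x),
\end{equation*}
since $\Re\,\nd\th_i\!=\!\nd\th_i$ and $-\fI\ka_i$ is already $\R$-valued.  Hence
$\sum_{i=1}^N\pi_i^*\al_i=\sum_{i=1}^N\nd\th_i-\fI\sum_{i=1}^N\ka_i$ on the fiber product.

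Next I would transport this to the tensor product.  The section
$\xi_1\!\otimes\!\ldots\!\otimes\!\xi_N$ is a unit local section of
$\bigotimes_i(L_i,\rho_i)$, and by the Leibniz rule for the induced tensor product
connection its connection 1-form is $\ka\!\equiv\!\sum_i\ka_i$.  Since $\Phi$ is an
isomorphism of Hermitian line bundles---so it preserves both the Hermitian metrics and
the connections---the image
$$g\equiv\Phi_2\big(\xi_1\!\otimes\!\ldots\!\otimes\!\xi_N\big)\!:U\lra S^1$$
is unit valued, and connection preservation together with $\na^{\C}g\!=\!(\nd g)\!\cdot\!1$
forces $g^{-1}\nd g\!=\!\ka$.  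In particular $\Im(g^{-1}\nd g)\!=\!-\fI\ka$, using $\ka$
being $\fI\R$-valued.

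Finally I would compute $\wt\Phi_2^*\nd\th$.  By $\C$-linearity of~$\Phi_2$ on the fibers,
$\wt\Phi_2(x,v_1,\ldots,v_N)\!=\!g(x)v_1\!\cdots\!v_N$ in the chosen trivializations,
and since $\nd\th\!=\!\Im(w^{-1}\nd w)$ on~$\C^*$,
\begin{equation*}
\wt\Phi_2^*\nd\th
=\Im\bigg(g^{-1}\nd g+\sum_{i=1}^N\!v_i^{-1}\nd v_i\bigg)
=\sum_{i=1}^N\nd\th_i+\Im(g^{-1}\nd g)
=\sum_{i=1}^N\nd\th_i-\fI\sum_{i=1}^N\ka_i
=\sum_{i=1}^N\pi_i^*\al_i.
\end{equation*}
The only subtlety to flag is the implicit assumption that the phrase ``isomorphism with
the trivial Hermitian line bundle'' entails preservation of connections, not only of
metrics; if instead only the metric were preserved, then $g^{-1}\nd g$ and $\ka$ would
differ by a closed purely imaginary correction $\fI\pi^*\mu$, which is precisely the
situation captured by the companion statement Corollary~\ref{conn_crl} used in
Lemma~\ref{Phipull_lmm}.
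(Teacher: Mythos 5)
Your proof is correct and follows essentially the same route as the paper's: both work in local unit trivializations $\xi_i$ and compute using the explicit formulas~\eref{aldfn_e} and~\eref{alcK_e}. The only cosmetic difference is that the paper adjusts $\xi_N$ so that $g\equiv\Phi_2(\xi_1\!\otimes\!\cdots\!\otimes\!\xi_N)\!=\!1$ (hence $\sum\ka_{\xi_i}\!=\!0$), whereas you keep $g$ arbitrary and instead use the identity $g^{-1}\nd g\!=\!\sum\ka_i$ coming from connection preservation; the subtlety you flag about metric-vs.-connection preservation is exactly why the paper's "by choosing $\xi_N$ suitably" step works, and both readings reach the same computation.
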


\begin{proof} For each $i\!\in\![N]$, let $\xi_i\!\in\!\Ga(U;L_i)$ be such that 
$|\xi_i|\!=\!1$.
By choosing $\xi_N$ suitably, we can assume that 
$$\Phi(\xi_1\!\otimes\!\ldots\!\otimes\!\xi_N)=1\,.$$
This implies that 
\begin{gather*}
0=\na\big(\xi_1\!\otimes\!\ldots\!\otimes\!\xi_N\big)
=\bigg(\sum_{i=1}^N\ka_{\xi_i}\!\!\bigg)\xi_1\!\otimes\!\ldots\!\otimes\!\xi_N\,,\\
\wt\Phi(x,v_1,\ldots,v_N)
= v_1\!\ldots\!v_N~~~\forall~(x,v_1,\ldots,v_N)\in U\!\times\!\C^N.
\end{gather*}
Combining this with~\eref{alcK_e} and~\eref{aldfn_e}, we find that 
\begin{equation*}\begin{split}
\sum_{i=1}^N\pi_i^*\al_i
\big|_{(x,v_1,\ldots,v_N)}\big(\dot{x},\dot{v}_1,\ldots,\dot{v}_N)
&=\Im\sum_{i=1}^N\big(\ka_{\xi_i}(\dot{x})\!+\!v_i^{-1}\dot{v}_i\big)
=\Im\sum_{i=1}^Nv_i^{-1}\dot{v}_i\\
&=\Im\big(\big\{\nd_{(x,v_1,\ldots,v_N)}\ln\wt\Phi\big\}
\big(\dot{x},\dot{v}_1,\ldots,\dot{v}_N)\big)\\
&=\big\{\wt\Phi^*\nd\th\big\}\big|_{(x,v_1,\ldots,v_N)}\big(\dot{x},\dot{v}_1,\ldots,\dot{v}_N),
\end{split}\end{equation*}
as claimed.
\end{proof}

\begin{crl}\label{conn_crl}
Let $N$, $X$, $(L_i,\rho_i,\na^{(i)})$, and~$\al_i$ be as in Lemma~\ref{conn_lmm}.
If 
\BE{conncrl_e0}(\pi,\Phi)\!: \bigotimes_{i=1}^N L_i\lra X\!\times\!\C\EE
is an isomorphism of complex line bundles,
there exist $f\!\in\!C^{\i}(X;\R^+)$ and an $\R$-valued 1-form 
$\mu_{\fI\R}$ on~$X$ such~that 
\BE{conncrl_e1}  
(\pi,\Phi)\!:
\bigotimes_{i=1}^N(L_i,\rho_i,\na^{(i)})\lra 
\big(X\!\times\!\C,f^2\rho_{\C},\na^{\C}\!+\!f^{-1}\nd f\!-\!\fI\mu_{\fI\R}\big)\EE
is an isomorphism of Hermitian line bundles.
If \eref{conncrl_e1} is an isomorphism of Hermitian line bundles, then 
$$\pi^*\mu_{\fI\R}+\sum_{i=1}^N\pi_i^*\al_i
=\wt\Phi^*\nd\th \in 
\Ga\big(\ri{L}_1\!\times_X\!\ldots\!\times_X\!\ri{L}_N;
T^*(\ri{L}_1\!\times_X\!\ldots\!\times_X\!\ri{L}_N)\big)\,.$$
\end{crl}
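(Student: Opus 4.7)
The plan is to derive both claims from the two assertions of Lemma~\ref{conn_lmm0} combined with an adaptation of Lemma~\ref{conn_lmm}. For the first claim, I will apply Lemma~\ref{conn_lmm0} to the complex line bundle $\bigotimes_{i=1}^N L_i$, taking the tensor product Hermitian structure $(\bigotimes_i\rho_i,\bigotimes_i\na^{(i)})$ as the first pair and the pullback $(\Phi^*\rho_\C,\Phi^*\na^\C)$ of the standard trivial Hermitian structure under the complex line bundle isomorphism~\eref{conncrl_e0} as the second. The lemma then furnishes $f\!\in\!C^{\i}(X;\R^+)$ and $\R$-valued 1-forms $\mu_\R,\mu_{\fI\R}$ on~$X$ satisfying
$$\Phi^*\rho_\C=f^{-2}\!\!\bigotimes_i\!\rho_i, \qquad \Phi^*\na^\C=\bigotimes_i\!\na^{(i)}+\mu_\R+\fI\mu_{\fI\R}.$$
Pushing these relations forward by~$\Phi$ (which covers the identity on~$X$, so that the functions and 1-forms on~$X$ transfer without change) converts them to $\Phi_*\bigotimes_i\rho_i=f^2\rho_\C$ and $\Phi_*\bigotimes_i\na^{(i)}=\na^\C-\mu_\R-\fI\mu_{\fI\R}$, which is~\eref{conncrl_e1}.

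For the second claim, I will first determine the connection 1-form~$\wh\al$ on the complement of the zero section of the Hermitian line bundle $(X\!\times\!\C,f^2\rho_\C,\na^\C\!-\!\mu_\R\!-\!\fI\mu_{\fI\R})$. The standard structure $(\rho_\C,\na^\C)$ on~$X\!\times\!\C$ has connection 1-form $\nd\th$ by~\eref{aldfn_e} and~\eref{alcK_e} (applied to the canonical unit section $x\!\mapsto\!(x,1)$, whose $\ka$-form vanishes), and \eref{conncomp_e2} of Lemma~\ref{conn_lmm0} thus yields
$$\wh\al=\nd\th-\pi_X^*\mu_{\fI\R},$$
where $\pi_X\!:X\!\times\!\C\!\lra\!X$ is the projection.

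Next, I will adapt the proof of Lemma~\ref{conn_lmm}. Let $\al_{\otimes}$ denote the connection 1-form of $(\bigotimes_i\rho_i,\bigotimes_i\na^{(i)})$ on the complement of the zero section of~$\bigotimes_iL_i$, and let $\wh\Phi$ denote the restriction of~$\Phi$ to the complements of the zero sections. The Hermitian isomorphism property of~$\Phi$ established by~\eref{conncrl_e1} implies $\wh\Phi^*\wh\al=\al_{\otimes}$. A direct computation with local unit sections $\xi_1,\ldots,\xi_N$ of $L_1,\ldots,L_N$, exactly as in the proof of Lemma~\ref{conn_lmm}, shows that
$$\Pi^*\al_{\otimes}=\sum_{i=1}^N\pi_i^*\al_i$$
on $\ri L_1\!\times_X\!\cdots\!\times_X\!\ri L_N$, since $\xi_1\!\otimes\!\cdots\!\otimes\!\xi_N$ is a local unit section of $\bigotimes_iL_i$ with connection form $\sum_i\ka_{\xi_i}$. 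Combining these identities with $\pi_X\!\circ\!\wh\Phi\!\circ\!\Pi\!=\!\pi$ and $\wt\Phi_2\!=\!\Phi_2\!\circ\!\Pi$ produces
$$\sum_{i=1}^N\pi_i^*\al_i=\Pi^*\wh\Phi^*\wh\al=\wt\Phi_2^*\nd\th-\pi^*\mu_{\fI\R},$$
which rearranges to~\eref{conncrl_e2}. The only nontrivial aspect of the argument is bookkeeping: one must track how the connection 1-form on the trivial bundle transforms under the rescaling of the metric by~$f^2$ and the twist of the connection by~$\mu_\R\!+\!\fI\mu_{\fI\R}$, but this is precisely the content of Lemma~\ref{conn_lmm0}.
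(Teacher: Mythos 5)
Your argument is correct. Both claims are established, and the computations are right: applying Lemma~\ref{conn_lmm0} to $\bigotimes_iL_i$ with the tensor-product structure and the $\Phi$-pullback of $(\rho_\C,\na^\C)$ gives the first claim; for the second, the connection 1-form on the twisted trivial bundle is $\nd\th-\pi_X^*\mu_{\fI\R}$ (by \eref{conncomp_e2} applied with $g=f^{-1}$ and the negated forms), the identity $\Pi^*\al_\otimes=\sum_i\pi_i^*\al_i$ follows from the computation with local unit sections, and chaining through $\wh\Phi^*\wh\al=\al_\otimes$ together with $\pi_X\!\circ\!\wh\Phi\!\circ\!\Pi=\pi$ and $\Phi_2\!\circ\!\Pi=\wt\Phi_2$ produces \eref{conncrl_e2}.

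The route differs from the paper's in where the modification is absorbed. The paper replaces the Hermitian structure on $L_1$ by $(f^{-2}\rho_1,\na^{(1)}+\mu_\R+\fI\mu_{\fI\R})$, so that the target can remain the \emph{standard} trivial Hermitian bundle and Lemma~\ref{conn_lmm} applies verbatim; it then converts $\al_1'$ back to $\al_1+\pi^*\mu_{\fI\R}$ by \eref{conncomp_e2}. You instead modify the Hermitian structure on the target $X\times\C$ and track how the connection 1-form changes there, which requires you to re-derive the internal step $\Pi^*\al_\otimes=\sum_i\pi_i^*\al_i$ of Lemma~\ref{conn_lmm}'s proof rather than invoking that lemma as a black box. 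Both are essentially the same amount of work; the paper's version is a cleaner invocation of the two lemmas, while yours is arguably more conceptually transparent since the correction term $\pi^*\mu_{\fI\R}$ appears exactly where the twisted structure lives. No gap.
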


\begin{proof}
The first claim of this corollary follows from Lemma~\ref{conn_lmm0}\ref{conn0_it1}. 
The second claim is obtained by applying Lemma~\ref{conn_lmm}~with
$$(L_1,\rho_1,\na^{(1)}) \qquad\hbox{replaced by}\qquad 
(L_1,f^{-2}\rho_1,\na^{(1)}\!-\!f^{-1}\nd f\!+\!\fI\mu_{\fI\R})$$
and then using~\eref{conncomp_e2}; see Lemma~\ref{conn_lmm0}.
\end{proof}

\noindent
With $N$, $X$, $(L_i,\rho_i,\na^{(i)})$, $\al_i$, and~$\pi_i$ as in Lemma~\ref{conn_lmm}, let
$$\pi\!: \cN=\bigoplus_{i=1}^NL_i\lra X\,.$$
A splitting of the exact sequence
\BE{cNseq_e}0\lra \pi^*\cN\lra T\cN \stackrel{\nd\pi}{\lra} \pi^*TX\lra0\EE
over $\ri{L}_1\!\times_X\!\ldots\!\times_X\!\ri{L}_N$ is obtained by taking 
$$T\cN^{\hor} =\bigcap_{i=1}^N\big(\ker\pi_i^*\al_i\cap\ker\pi_i^*\nd\rho_i\big)\subset T\cN.$$
By~\eref{cKprp_e}, \eref{aldfn_e}, and~\eref{kaIm_e},
\BE{TcNhor_e}\begin{split}
T_{(x,v_1,\ldots,v_N)}\cN^{\hor}
=\big\{(\dot{x},-\ka_{\xi_1}^{(1)}(\dot{x})v_1,\ldots,-\ka_{\xi_N}^{(N)}(\dot{x})v_N)\!:~
\dot{x}\!\in\!T_xX\big\}\subset T_xX\!\times\!\C^N  \qquad&\\
\forall~(x,v_1,\ldots,v_N)\in U\!\times\!(\C^*)^N&
\end{split}\EE
in the trivialization induced by local sections $\xi_i\!\in\!\Ga(U;L_i)$ 
with $|\xi_i|\!=\!1$.
The above splitting thus extends to a splitting 
$$T\cN =T\cN^{\ver}\oplus T\cN^{\hor}\lra \cN$$
of~\eref{cNseq_e} over the entire total space~$\cN$;
the latter restricts to the canonical splitting over $X\!\subset\!\cN$.
Via this splitting,
the complex structure~$\fI$ on the fibers of $\pi$ and 
an almost complex structure~$J$ on~$X$ induce an almost complex structure~$\wt{J}$
on the total space of~$\cN$. 

\begin{lmm}\label{almostJ_lmm}
If $N$, $X$, $(L_i,\rho_i,\na^{(i)})$, $\al_i$, $\pi_i$, and~$\wt{J}$ are as
above and
$\Phi$ is as in~\eref{conncrl_e0}, then there exists a continuous function 
$\cC_{\Phi}\!:X\!\lra\!\R^+$ with the following property.
For every $\la\!\in\!\C^*$, $v\!\in\!\wt\Phi^{-1}(\la)$, and 
$\dot{v}\!\in\!T_v\wt\Phi^{-1}(\la)$, there exists 
$\dot{w}\!\in\!T_v\cN^{\ver}$ such~that 
\BE{almostJ_e} \wt{J}\dot{v}+\dot{w}\in T_v\wt\Phi^{-1}(\la),\qquad
|\dot{w}|\le \cC_{\Phi}\big(\pi(v)\big)|\la|^{1/N}\big|\nd\pi_v(\dot{v})\big|.\EE
\end{lmm}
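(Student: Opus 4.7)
The plan is to reduce the problem to a single scalar constraint in a local trivialization and then extract the decisive exponent $|\la|^{1/N}$ from Cauchy--Schwarz together with AM--GM. Choose unit local sections $\xi_i\!\in\!\Gamma(U;L_i)$ with $|\xi_i|_{\rho_i}\!\equiv\!1$; these identify $\cN|_U\!\approx\!U\!\times\!\C^N$, present the horizontal lift of $\dot x\!\in\!T_xX$ at $v\!=\!(x;v_1,\ldots,v_N)$ as $(\dot x,-\ka_1(\dot x)v_1,\ldots,-\ka_N(\dot x)v_N)$ with $\ka_i\!\in\!\Omega^1(U;\fI\R)$ the connection 1-form of $\na^{(i)}$ relative to $\xi_i$, and represent $\wt\Phi_2$ as $(x;v_1,\ldots,v_N)\!\mapsto\!g(x)v_1\cdots v_N$ for a smooth $g\!:U\!\to\!\C^*$. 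For $\la\!\neq\!0$ all $v_i$ are nonzero, and $\dot v\!=\!(\dot x,\dot v_1,\ldots,\dot v_N)$ lies in $T_v\wt\Phi_2^{-1}(\la)$ precisely when $g^{-1}\nd g(\dot x)+\sum_i v_i^{-1}\dot v_i=0$.

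Decomposing $\dot v$ into horizontal and vertical components and applying $\wt J$ (which acts by $J$ horizontally and by $\fI$ vertically), a direct substitution using the tangency condition above yields the clean identity
$$d_v\wt\Phi_2(\wt{J}\dot v)\;=\;-2\fI\la\cdot \al_0^{0,1}(\dot x),\qquad \al_0\equiv g^{-1}\nd g-\sum_{i=1}^N\ka_i,$$
where $\al_0^{0,1}$ is the $J$-antilinear part of the $\C$-valued 1-form $\al_0$. Gauge invariance under $\xi_i\!\mapsto\!e^{\fI\phi_i}\xi_i$ shows that $\al_0$ is intrinsic on $X$ (in fact equal to $\mu_{\R}+\fI\mu_{\fI\R}$ of Corollary~\ref{conn_crl}), so the obstruction to $\wt{J}\dot v$ being tangent to $\wt\Phi_2^{-1}(\la)$ has size $O(|\la||\dot x|)$ with an intrinsic constant.

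Next I select $\dot w=(0;\dot w_1,\ldots,\dot w_N)\!\in\!T_v\cN^{\ver}$ solving $d_v\wt\Phi_2(\dot w)\!=\!-d_v\wt\Phi_2(\wt{J}\dot v)$; this is the single $\C$-linear scalar constraint
$$\sum_{i=1}^N v_i^{-1}\dot w_i\;=\;2\fI\al_0^{0,1}(\dot x).$$
Among all solutions, Cauchy--Schwarz provides the minimum-$L^2$ choice $\dot w_i\!=\!\mu\,\overline{v_i^{-1}}$ with $\mu\!=\!2\fI\al_0^{0,1}(\dot x)/\sum_k|v_k|^{-2}$, whose product Hermitian norm satisfies $|\dot w|^2=4|\al_0^{0,1}(\dot x)|^2/\sum_k|v_k|^{-2}$.

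The decisive input is the AM--GM inequality
$$\sum_{k=1}^N|v_k|^{-2}\;\ge\;N\bigl(|v_1\cdots v_N|\bigr)^{-2/N}=N\bigl(|\la|/|g(x)|\bigr)^{-2/N},$$
which together with $|\dot x|\!\le\!|\dot v|$ in the horizontal--vertical norm yields $|\dot w|\!\le\!\cC_\Phi(\pi(v))\,|\la|^{1/N}|\dot v|$ for a continuous $\cC_\Phi\!:X\!\to\!\R^+$ assembled from the intrinsic data $|g|,|\al_0|,J,$ and $(\rho_i)_{i\in[N]}$. The hard part is recognizing that the fiber $\wt\Phi_2^{-1}(\la)$ imposes only the scalar product constraint $|v_1\cdots v_N|\!=\!|\la|/|g|$ rather than componentwise control of the $|v_i|$, and that this is exactly what AM--GM converts into the sharp exponent~$1/N$; a naive coordinatewise estimate would give only $|\la|^{1/k}$ for whichever index $k$ happens to dominate.
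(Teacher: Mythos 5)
Your proof is correct. Its core---pass to a local unitary trivialization, observe that the obstruction to $\wt J\dot v$ being tangent to the fiber is the single scalar constraint $\sum_i v_i^{-1}\dot w_i = 2\fI\alpha_0^{0,1}(\dot x)$ with $\alpha_0 = g^{-1}\nd g - \sum_i\ka_{\xi_i}$, and exploit that the fiber imposes only the product constraint $\prod_i|v_i| = |\lambda|/|g|$---is the same as in the paper's proof; the paper's formula~\eref{PhiNormBd_e3} for $\dot w_1$ is exactly $2\fI\alpha_0^{0,1}(\dot x)v_1$, and its identities~\eref{PhiNormBd_e4} encode the same tangency conditions you derive more compactly. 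Where you differ is the final step: the paper places the entire correction in the one slot where $|v_i|$ is smallest (after a WLOG permutation, so that $|v_1|\le(|\lambda|/|f|)^{1/N}$), whereas you take the least-squares solution $\dot w_i = \mu\,\overline{v_i}^{-1}$ of the scalar constraint and bound $\sum_i|v_i|^{-2}$ from below by AM--GM. Both are elementary mean inequalities producing the exponent $1/N$; your choice is manifestly symmetric, avoids the permutation, and gains a factor $1/\sqrt{N}$ in the constant, while the paper's choice keeps $\dot w$ supported in a single component, which makes the tangency verification a one-line check. Your parenthetical remarks that $\alpha_0$ is gauge-invariant and equals $\mu_\R + \fI\mu_{\fI\R}$ of Corollary~\ref{conn_crl} are both correct (the pushforward of the product connection under $\Phi$ is the trivial connection minus $\alpha_0$) and make explicit why the local estimate assembles into a continuous $\cC_\Phi$ on all of~$X$---a point the paper leaves implicit by writing the bound purely in coordinates.
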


\begin{proof}
Let $v\!\in\!\wt\Phi^{-1}(\la)$,  $\dot{v}\!\in\!T_v\cN$, and $x\!=\!\pi(v)$.
In a trivialization as in~\eref{TcNhor_e},
$$v=(x,v_1,\ldots,v_N), \qquad 
\dot{v}=\big(\dot{x},\dot{v}_1\!-\!\ka_{\xi_1}^{(1)}(\dot{x})v_1,\ldots,
\dot{v}_N-\ka_{\xi_N}^{(N)}(\dot{x})v_N\big)$$
for some $v_i,\dot{v}_i\!\in\!\C$. 
Furthermore,
$$\wt\Phi(v)=f(x)v_1\ldots v_N, \qquad
\wt{J}\dot{v}=\big(J\dot{x},\fI\dot{v}_1\!-\!\ka_{\xi_1}^{(1)}(J\dot{x})v_1,\ldots,
\fI\dot{v}_N-\ka_{\xi_N}^{(N)}(J\dot{x})v_N\big)$$
for some $f\!\in\!C^{\i}(U;\C^*)$ determined by the trivialization.
Thus,
\BE{PhiNormBd_e} 
\big|\wt\Phi(v)\big|=\big|f(x)\big|\cdot |v_1|\ldots |v_N| \,.\EE
If $\la\!\in\!\C^*$ and $(x,v)\!\in\!\wt\Phi^{-1}(\la)$, 
then $v_i\!\neq\!0$ for all~$i$.
By symmetry and~\eref{PhiNormBd_e}, we can assume~that
\BE{PhiNormBd_e2} |v_1|\le \big|\wt\Phi(v)\big/f(x)\big|^{1/N}\,.\EE
Define
\begin{gather*}
\dot{w}_1=\fI\bigg(\frac{\nd_xf(\dot{x})\!+\!\fI\,\nd_xf(J\dot{x})}{f(x)}
-\sum_{i=1}^N\big(\ka_{\xi_i}^{(i)}(\dot{x})\!+\!\fI\ka_{\xi_i}^{(i)}(J\dot{x})\big)\bigg)v_1
\in\C,\\
\dot{w}=(0,\dot{w}_1,0,\ldots,0)\in T_x\cN.
\end{gather*}
By~\eref{PhiNormBd_e2}, $\dot{w}$ satisfies the second condition in~\eref{almostJ_e}.
If $\dot{v}\!\in\!T_v\wt\Phi^{-1}(\la)$, then
\begin{gather*}
\frac{\nd_xf(\dot{x})}{f(x)}+\sum_{i=1}^N\frac{\dot{v}_i\!-\!\ka_{\xi_i}^{(i)}(\dot{x})v_i}{v_i}
=\nd_{(x,v)}\wt\Phi(\dot{v})=0,\\
\nd_{(x,v)}\wt\Phi\big(\wt{J}\dot{v}\!+\!\dot{w}\big)=
\frac{\nd_xf(J\dot{x})}{f(x)}
+\frac{\fI\dot{v}_1\!-\!\ka_{\xi_1}^{(1)}(J\dot{x})v_1\!+\!\dot{w}_1}{v_1}
+\sum_{i=2}^N\frac{\fI\dot{v}_i\!-\!\ka_{\xi_i}^{(i)}(J\dot{x})v_i}{v_i}=0.
\end{gather*}
Thus, $\dot{w}$ also satisfies the first condition in~\eref{almostJ_e}.
\end{proof}

\begin{crl}\label{almostJ_crl}
Suppose $N$, $X$, $(L_i,\rho_i,\na^{(i)})$, $\al_i$, $\pi_i$, $\wt{J}$, and
$\Phi$ are as in Lemma~\ref{almostJ_lmm} and 
$\wt\om$ is a nondegenerate 2-form on a neighborhood~$\cN'$ of~$X$ in~$\cN$ taming~$\wt{J}$. 
For every compact subset~$K$ of~$\cN'$, there exists $r_K\!\in\!\R^+$ such that 
the restriction of~$\wt\om$ to $T_v\wt\Phi^{-1}(\la)$ is nondegenerate for 
all $v\!\in\!\wt\Phi^{-1}(\la)\!\cap\!K$ and $\la\!\in\!\C^*$ with $|\la|\!<\!r_K$.
\end{crl}

\begin{proof} Given $\dot{v}\!\in\!T_v\wt\Phi^{-1}(\la)$, let 
$\dot{w}\!\in\!T_v\cN^{\ver}$ be as in Lemma~\ref{almostJ_crl}. 
Thus,
$$\big|\wt\om\big(\dot{v},\wt{J}\dot{v}\!+\!\dot{w}\big)\!-\!
\wt\om\big(\dot{v},\wt{J}\dot{v}\big)\big|
\le\cC_{\Phi}\big(\pi(v)\big)|\la|^{1/N}\big|\nd\pi_v(\dot{v})\big||\dot{v}|
\le\cC_{\Phi}'\big(\pi(v)\big)r_K^{1/N}|\dot{v}|^2.$$
Since $\wt{J}$ tames~$\wt\om$ over the compact set~$K$, it follows that 
$\wt\om(\dot{v},\wt{J}\dot{v}\!+\!\dot{w})$ is nonzero if 
$r_K$ is sufficiently small.
\end{proof}

\section{The smoothability criterion revisited}
\label{RemainPf_sec}

\noindent
The smoothability condition~\eref{SympSumCond_e} is equivalent to the condition
$$\sum_{i=1}^N c_1\big(\cO_{X_i^c}(X_i)\big)\big|_{X_{\prt}}=0\in H^2\big(X_{\prt};\Z\big).$$
Proposition~\ref{SimpCr_prp} below provides a different description of 
the cohomology classes
$$\PD_{X^c_i}\big(X_i\big)\equiv c_1\big(\cO_{X_i^c}(X_i)\big)\in H^2(X_i^c;\Z),
\qquad i\!\in\![N].$$
It is more conceptual and less useful, but is more intrinsic.
It is also more indicative of being an obstruction to the existence
of the smoothing, since the singular fiber~$X_{\eset}$ 
of $\pi\!:\cZ\!\lra\!\De$  is homologous to a smooth one 
and the normal bundle to the latter is trivial. 

\begin{prp}\label{SimpCr_prp}
Let $(X_{\eset},(\om_i)_{i\in[N]})$ be as SC symplectic variety as 
in Definition~\ref{SCC_dfn}.
\begin{enumerate}[label=(\arabic*),leftmargin=*]

\item \label{label:poincaredualcondition}
For each $i\!\in\![N]$ and each connected component $X'_{\prt;i}$ of $X_{\prt}\!\cap\!X_i$, 
there exists a unique element 
$\PD_{X^c_i}(X'_{\prt;i})\!\in\!H^2(X_i^c;\Z)$ such~that 
\begin{alignat}{1}
\label{PDcond_e}
&\PD_{X^c_i}\big(X'_{\prt;i}\big)\big|_{X_j}
=\PD_{X_j}\big(X'_{\prt;i}\!\cap\!X_j\big)\in H^2(X_j;\Z)
\qquad\forall~j\in[N]\!-\!\{i\},\\
\label{PDcond_e1}
&\PD_{X^c_i}\big(X'_{\prt;i}\big)|_{X^c_i-X'_{\prt;i}}= 0 \in H^2(X^c_i\!-\!X'_{\prt;i};\Z).\
\end{alignat}

\item \label{label:linebundlecondition} 
For each $i\!\in\![N]$, 
$$c_1\big(\cO_{X_i^c}(X_i)\big)=\sum_{X'_{\prt;i}}\PD_{X^c_i}\big(X'_{\prt;i}\big)
\in  H^2(X_i^c;\Z),$$ 
where the sum is taken over the connected components of $X_{\prt}\!\cap\!X_i$.

\end{enumerate}
\end{prp}

\begin{proof}
For all $i,j\!\in\![N]$ distinct, let $\Psi_{ij;j}$ and $\fI_{ij;j}$ be 
as in~\eref{restrisom_e3a} and~\eref{restrisom_e3b}, respectively.
Restricting the construction of~\eref{cOXXidfn_e} to each connected component $X'_{\prt;i}$ 
of $X_{\prt}\!\cap\!X_i$, we obtain  a complex line bundle $\cO_{X_i^c}(X'_{\prt;i})$
over~$X_i^c$ such~that 
$$c_1\big(\cO_{X_i^c}(X'_{\prt;i})\big)\big|_{X_j}
= c_1\big( \cO_{X_j}(X'_{\prt;i}\!\cap\!X_j)\big) =
\PD_{X_j}\big(X'_{\prt;i}\!\cap\!X_j\big)
\qquad\forall~j\in[N]\!-\!\{i\}.$$
Thus, the cohomology class
$$\PD_{X^c_i}\big(X'_{\prt;i}\big)\equiv
c_1\big(\cO_{X_i^c}(X'_{\prt;i})\big)\in H^2(X_i^c;\Z)$$
satisfies~\eref{PDcond_e}.
Since the restriction of $\cO_{X_i^c}(X'_{\prt;i})$ to $X^c_i\!-\!X'_{\prt;i}$ 
is a trivial line bundle, it also satisfies~\eref{PDcond_e1}.
Along with Lemma~\ref{Xinject_lmm} below, this completes the proof of the first claim.
It is immediate~that
$$\cO_{X_i^c}(X_i)\equiv\bigotimes_{X'_{\prt;i}}
\cO_{X_i^c}(X'_{\prt;i})\lra X_i^c\,,$$
where the tensor product is taken over the connected components of $X_{\prt}\!\cap\!X_i$.
This implies the second claim.
\end{proof}

\begin{lmm}\label{Xinject_lmm}
Let $(X_{\eset},(\om_i)_{i\in[N]})$ be as as Proposition~\ref{SimpCr_prp}
and $i,j\!\in\![N]$ be distinct.
If $X_{\prt;i}'\!\subset\!X_{\prt}\!\cap\!X_i$ is a connected component such that 
$X_{\prt;i}'\!\cap\!X_j\!\neq\!\eset$, then the homomorphism
\BE{Xinject_e}H^2(X_i^c;\Z)\lra H^2(X_i^c\!-\!X_{\prt;i}';\Z)\oplus  H^2(X_j;\Z),\EE
induced by the restriction homomorphisms, is injective.
\end{lmm}

\begin{proof}
The kernel of the first homomorphism in~\eref{Xinject_e} is the image of the restriction
homomorphism
$$H^2\big(X_i^c,X_i^c\!-\!X_{\prt;i}';\Z\big)\lra H^2(X_i^c;\Z)\,.$$
Thus, it is sufficient to show that the composition 
\BE{Xinject_e2}H^2\big(X_i^c,X_i^c\!-\!X_{\prt;i}';\Z\big)\lra  H^2(X_i^c;\Z)\lra H^2(X_j;\Z)\EE
of the two restriction homomorphisms is injective.\\

\noindent
With notation as in~\eref{restrisom_e3a}, let
$$D_{X_k}(X_{\prt;i}')=\cN_{ik;i}'\big|_{X_{\prt;i}'\cap X_k}
~~~\forall~k\!\in\![N]\!-\!i,\qquad
D_{X_i^c}(X_{\prt;i}')=
\bigcup_{k\in[N]-i}\!\!\!\!\!\!D_{X_k}(X_{\prt;i}')\,.$$
We use the maps $\Psi_{ik;k}$ to identify these disk bundles with neighborhoods
of~$X_{\prt;i}'\!\cap\!X_k$ in~$X_k$ and of 
$X_{\prt;i}'$ in~$X_i^c$.
Since these disk bundles are oriented,
there is a commutative diagram
$$\xymatrix{H^0(X_{\prt;i}')\ar[r]^>>>>>>>>>{\approx}\ar[d]&
H^2\big(D_{X_i^c}(X'_{\prt;i}),D_{X_i^c}(X'_{\prt;i})\!-\!X'_{\prt;i}\big) 
\ar[d]& H^2\big(X_i^c,X_i^c\!-\!X_{\prt;i}'\big)\ar[d]\ar[l]_>>>>>>>>>{\approx}\\
H^0\big(X_{\prt;i}'\!\cap\!X_j\big)\ar[r]^>>>>>{\approx}&
H^2\big(D_{X_j}(X_{\prt;i}'),D_{X_j}(X_{\prt;i}')\!-\!X'_{\prt;i}\!\cap\!X_j\big) 
&H^2\big(X_j,X_j\!-\!X_{\prt;i}'\!\cap\!X_j\big)\ar[l]_>>>>>>{\approx}}$$
where the vertical arrows are the restriction homomorphisms,
the right horizontal arrows are the excision isomorphisms \cite[Corollary~4.6.5]{Spanier},
and the left horizontal arrows are the Thom isomorphisms \cite[Theorem~10.4]{MiSt}
for the disk bundle $D_{X_i^c}(X'_{\prt;i})\!\lra\!X'_{\prt;i}$ and 
its restriction to~$X_{\prt;i}'\!\cap\!X_j$.
They send the unit~1 in $H^0(X_{\prt;i}';\Z)$ and $H^0\big(X_{\prt;i}'\!\cap\!X_j;\Z\big)$
to the Thom class~$u_i$ for $D_{X_i^c}(X'_{\prt;i})$
and its restriction $u_i|_{X_{\prt;i}'\cap X_j}$, respectively.
Let 
$$u_i'|_{X_{\prt;i}'\cap X_j}\in H^2\big(X_j,X_j\!-\!X_{\prt;i}'\!\cap\!X_j;\Z\big)$$
denote the element corresponding to $u_i|_{X_{\prt;i}'\cap X_j}$ under the excision
isomorphism.
By~\cite[Exercise~11-C]{MiSt},  the restriction of $u_i'|_{X_{\prt;i}'\cap X_j}$ to~$X_j$
is $\PD_{X_j}(X_{\prt;i}'\!\cap\!X_j)$.
Since $X_{\prt;i}'\!\cap\!X_j$ is a symplectic submanifold of~$X_j$,
$$\blr{\om^{n-1}\PD_{X_j}(X_{\prt;i}'\!\cap\!X_j),X_j}
=\blr{\om^{n-1},X_{\prt;i}'\!\cap\!X_j}\neq0$$ 
if $2n\!=\!\dim_{\R}X_j$.
Thus, $\PD_{X_j}(X_{\prt;i}'\!\cap\!X_j)\!\neq\!0$ and the composition 
$$H^2\big(X_i^c,X_i^c\!-\!X_{\prt;i}';\Z\big)\lra 
H^2\big(X_j,X_j\!-\!X_{\prt;i}'\!\cap\!X_j;\Z\big) \lra H^2(X_j;\Z)$$
of the two restriction homomorphisms is nonzero even after tensoring with~$\Q$.
Since \hbox{$H^0(X_{\prt;i}';\Z)\!=\!\Z$}, it follows that this composition is
injective.
Therefore, the composition~\eref{Xinject_e2} is also injective,
as needed.
\end{proof}

\vspace{.3in}

\noindent
{\it Simons Center for Geometry and Physics, Stony Brook University, Stony Brook, NY 11794\\
mtehrani@scgp.stonybrook.edu}\\

\noindent
{\it Department of Mathematics, Stony Brook University, Stony Brook, NY 11794\\
markmclean@math.stonybrook.edu, azinger@math.stonybrook.edu}\\

\end{document}